\documentclass{article}
\usepackage{amsmath,amssymb,amsthm, amsfonts, mathrsfs}
\usepackage{enumerate}
\usepackage{calligra,indentfirst,epsfig}
\calligra
\addtolength{\hoffset}{-0.55cm}\setlength
{\hoffset}{-0.5in}
\addtolength{\voffset}{-0.45cm}\setlength{\voffset}{-0.5in}
\addtolength{\textwidth}{1.5cm} \addtolength{\textheight}{1.7cm}
\setlength{\textwidth}{6.8in} \setlength{\textheight}{9 in}
\setlength {\topmargin}{0.5in} \setlength{\evensidemargin}{0.45in}
\setlength{\oddsidemargin}{0.45in} \setlength {\columnsep}{6mm}

\newtheorem{thm}{Theorem}[section]
\newtheorem{lem}{Lemma}[section]
\newtheorem{cor}{Corollary}[section]
\newtheorem{prop}{Proposition}[section]

\theoremstyle{remark}
\newtheorem{rem}{Remark}[section]

\theoremstyle{definition}

\allowdisplaybreaks
\linespread{1.2}
\begin{document}
\title{\bf {Repeated-root constacyclic codes over the \\ chain ring $\boldmath { \mathbb{F}_{p^m}[u]/\langle u^3 \rangle }$}}
\author{Anuradha Sharma{\footnote{Corresponding Author, Email address: anuradha@iiitd.ac.in}~ and Tania Sidana}\\
{\it Department of Mathematics, IIIT-Delhi}\\{\it New Delhi 110020, India}}
\date{}
\maketitle
\begin{abstract} Let $\mathcal{R}=\mathbb{F}_{p^m}[u]/\langle u^3 \rangle $ be the finite commutative chain ring  with unity, where $p$ is a  prime, $m$ is a positive integer and $\mathbb{F}_{p^m}$ is the finite field with $p^m$ elements. In this paper, we determine all repeated-root constacyclic codes of  arbitrary lengths over $\mathcal{R},$ their sizes and their dual codes. As an application, we  list some  isodual constacyclic codes  over $\mathcal{R}.$ We also determine Hamming distances, RT distances, and RT weight distributions of some repeated-root constacyclic codes  over $\mathcal{R}.$  \end{abstract}
{\bf Keywords:}  Cyclic codes; Negacyclic codes; Local rings. 
\vspace{-4mm}\section{Introduction}\label{intro}\vspace{-2mm}
Berlekamp \cite{berl} first introduced and studied constacyclic codes over finite fields, which have a rich algebraic structure and are generalizations of  cyclic and negacyclic codes. Calderbank et al. \cite{calder}, Hammons et al.  \cite{hammons} and Nechaev \cite{nech} related binary non-linear codes  (e.g. Kerdock and Preparata codes) to linear codes over the finite commutative chain ring $\mathbb{Z}_{4}$ of integers modulo 4, with the help of a Gray map.  Since then, codes over finite commutative chain rings have received a great deal of attention.  However, their algebraic structures  are known only in a few cases. 

Towards this, Dinh and L$\acute{o}$pez-Permouth \cite{dinh} studied algebraic structures of simple-root cyclic and negacyclic codes of length $n$ over a finite commutative chain ring $R$ and their dual codes. In the same work,  they determined all negacyclic codes of length $2^t$ over the ring $\mathbb{Z}_{2^m}$ of integers modulo $2^m$ and their dual codes, where $t\geq 1$ and  $m \geq 2$ are  integers.  In a related work,  Batoul et al. \cite{bat}  proved that when $\lambda$ is an $n$th power of a unit in a finite chain ring $R,$  repeated-root $\lambda$-constacyclic  codes of length $n$ over $R$ are equivalent to cyclic codes.  Apart from this, many authors \cite{ashker,bannai,bonnecaze,huffman,udaya} investigated algebraic structures of linear and cyclic codes over the finite commutative  chain ring $\mathbb{F}_{2}[v]/\langle  v^2\rangle.$
 
 To describe the recent work, let $p$ be a prime, $s,m$ be positive integers, $\mathbb{F}_{p^m}$ be the finite field of order $p^m,$ and let $\mathbb{F}_{p^m}[v]/\langle  v^2\rangle$ be the finite commutative chain ring with unity.  Dinh \cite{dinh4} determined all constacyclic codes of length $p^s$ over $\mathbb{F}_{p^m}[v]/\langle  v^2\rangle$ and their Hamming distances. Later, Chen et al. \cite{chen}, Dinh et al. \cite{dinh5} and Liu et al. \cite{liu} determined   all constacyclic codes of length $2p^s$ over the ring $\mathbb{F}_{p^m}[v]/\langle  v^2\rangle,$ where $p$ is an odd prime.   Recently, Sharma and Rani \cite{sharma2} determined  all constacyclic codes of length $4p^s$ over $\mathbb{F}_{p^m}[v]/\langle  v^2\rangle$ and their dual codes,  where $p$ is an odd prime and $s,m$ are positive integers. Using a technique  different from that employed in  \cite{chen, dinh4, dinh5, liu, sharma2}, Cao et al. \cite{cao} determined  all $\alpha$-constacyclic codes of length $n p^s$ over $\mathbb{F}_{p^m}[v]/\langle  v^2\rangle$ and their dual codes by writing a canonical form decomposition for each code, where $\alpha $ is a non-zero element of $\mathbb{F}_{p^m}$ and  $n$ is a  positive integer with $\gcd(p,n)=1.$ In a recent work, Zhao et al. \cite{zhao} determined   all $(\alpha+\beta v)$-constacyclic codes of length $np^{s}$ over $\mathbb{F}_{p^m}[v]/ \langle  v^2\rangle$ and their dual codes, where $n$ is a positive integer coprime to $p,$  and $\alpha,\beta$ are non-zero elements of $\mathbb{F}_{p^m}.$ This completely solves the problem of determination of all constacyclic codes of length $np^s$ over $\mathbb{F}_{p^m}[v]/\langle  v^2\rangle$ and their dual codes,  where $n$ is a positive integer coprime to $p.$ In a subsequent work \cite{sharma3}, we determined all repeated-root constacyclic codes of arbitrary lengths over finite commutative chain rings with nilpotency index 2 and their dual codes. In the same work, we also listed some isodual repeated-root constacyclic codes and obtained Hamming distances, RT distances and RT weight distributions of some repeated-root constacyclic codes over finite commutative chain rings with nilpotency index 2.
  
  In a related work, Cao \cite{cao1} established algebraic structures of all $(1+ a w)$-constacyclic codes of arbitrary lengths over a finite commutative chain ring $R$ with the maximal ideal as $\langle w \rangle,$ where   $a$ is a unit in $R.$ 
   Later, Dinh et al. \cite{dinh6} studied repeated-root $(\alpha+a w)$-constacyclic codes of length $p^s$ over  a finite commutative chain ring $R$ with the maximal ideal as $\langle w \rangle,$ where $p$ is a prime number, $s \geq 1$ is an integer and $\alpha, a$ are units in $R.$  The results obtained in Dinh et al. \cite{dinh6} can also be obtained from the work of Cao \cite{cao1} and by establishing a ring isomorphism from $R[x]/\langle x^{p^s}-1-a \alpha^{-1}w \rangle$ onto $R[x]/\langle x^{p^s}-\alpha-a w\rangle$ as $A(x) \mapsto A(\alpha_0^{-1}x)$ for each $A(x) \in R[x]/\langle x^{p^s}-1-a \alpha^{-1}w \rangle,$ where $\alpha=\alpha_0^{p^s}$ (such an element $\alpha_0$ always exists in $\mathbb{F}_{p^m}$).    The constraint that $a$ is a unit in $R$ restricts their study to only a few special classes of repeated-root constacyclic codes  over $R.$ When $a$ is a unit in $R,$ the codes belonging to these special classes are direct sums of (principal) ideals of certain finite commutative chain rings. However,  when $a$ is a non-unit in $R,$ repeated-root constacyclic codes over $R$ can also be direct sums of  non-principal ideals. In another related work, Sobhani \cite{sobh} determined all $(\alpha+\gamma u^2)$-constacyclic codes of length $p^s$ over $\mathbb{F}_{p^m}[u]/\langle  u^3\rangle$ and their dual codes,  where $\alpha, \gamma$ are non-zero elements of $\mathbb{F}_{p^m}.$ 
  
   The main goal of this paper is to determine all repeated-root constacyclic codes of arbitrary lengths over $\mathbb{F}_{p^m}[u]/\langle u^3 \rangle,$ their sizes and  their dual codes, where $p$ is a prime and $m$ is a positive integer. The Hamming distances,  RT distances, and RT weight distributions are also determined for some repeated-root constacyclic codes  over $\mathbb{F}_{p^m}[u]/\langle u^3 \rangle.$ Some isodual repeated-root constacyclic codes  over $\mathbb{F}_{p^m}[u]/\langle u^3 \rangle$ are also listed.

 This paper is organized as follows: In Section \ref{prelim}, we state some basic definitions and  results  that are needed to derive our main results. In Section \ref{sec3}, we determine all repeated-root constacyclic codes of arbitrary lengths over $\mathbb{F}_{p^m}[u]/\langle u^3 \rangle,$ their dual codes and their sizes (Theorems \ref{t0}-\ref{t3}). As an application, we also determine some isodual repeated-root constacyclic codes over $\mathbb{F}_{p^m}[u]/\langle u^3 \rangle$ (Corollaries \ref{cor0}-\ref{cor2}).  In Section \ref{sec4}, we determine  Hamming distances, RT distances, and RT weight distributions of some repeated-root constacyclic codes over $\mathbb{F}_{p^m}[u]/\langle u^3 \rangle$ (Theorems \ref{HDt2}-\ref{RTW2}).  In Section \ref{con}, we mention a brief conclusion and discuss some interesting open problems in this direction.

\vspace{-4mm}\section{Some preliminaries}\label{prelim}\vspace{-2mm}
 A commutative ring   $R$ with unity is said to be (i) a local ring if it has a unique maximal ideal (consisting of all the non-units of $R$),    and (ii)  a chain ring if all the ideals of $R$ form a chain with respect to the  inclusion relation.  Then the following result is well-known. \vspace{-1mm}\begin{prop}\label{pr1}\cite{dinh} For a finite commutative ring $R$  with unity,  the following statements are equivalent:
\begin{enumerate}\vspace{-2mm}\item[(a)] $R$ is a local ring whose maximal ideal $M$ is principal,  i.e., $M=\langle  w\rangle $ for some $w \in R.$
\vspace{-2mm}\item[(b)] $R$ is a local principal ideal ring.
\vspace{-2mm}\item[(c)] $R$ is a chain ring  and all its ideals  are given by $\langle w^i\rangle ,$ $0 \leq i \leq e,$ where $e$ is the nilpotency index of $w.$ Furthermore, we have $|\langle w^i \rangle |=|R/\langle w\rangle |^{e-i}$ for $0 \leq i \leq e.$ (Throughout this paper, $|A|$ denotes the cardinality of the set $A.$)\vspace{-2mm}\end{enumerate}  \end{prop}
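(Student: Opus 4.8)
The plan is to establish the cycle of implications $(a)\Rightarrow(b)\Rightarrow(c)\Rightarrow(a)$, since each arrow is short once the key nilpotency fact is in place. The crucial preliminary observation, which I would record first, is that in a \emph{finite} commutative local ring $R$ with maximal ideal $M$, the ideal $M$ is nilpotent: indeed $R\supseteq M\supseteq M^2\supseteq\cdots$ is a descending chain of finite sets, so it stabilizes, say $M^k=M^{k+1}$, and then Nakayama's lemma (applicable because $M$ is the Jacobson radical and $M^k$ is finitely generated) forces $M^k=0$. This is the technical heart of the argument; everything else is bookkeeping.

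For $(a)\Rightarrow(b)$: assume $M=\langle w\rangle$. Let $e$ be the nilpotency index of $w$, i.e. the least positive integer with $w^e=0$ (finite by the preliminary observation, assuming $R\neq 0$). I claim every nonzero $x\in R$ can be written as $x=w^i v$ with $v$ a unit and $0\le i<e$: take $i$ maximal such that $x\in\langle w^i\rangle$ (such $i$ exists and is $<e$ since $x\neq 0$), write $x=w^i v$, and observe $v\notin M$ (else $x\in\langle w^{i+1}\rangle$, contradicting maximality), so $v$ is a unit. Consequently every ideal $I\neq\{0\}$ equals $\langle w^i\rangle$ where $i$ is the smallest exponent occurring among the elements of $I$; hence every ideal is principal, giving $(b)$, and simultaneously the ideals are totally ordered as $\langle w^0\rangle\supsetneq\langle w^1\rangle\supsetneq\cdots\supsetneq\langle w^e\rangle=\{0\}$.

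For $(b)\Rightarrow(c)$: a local ring in which every ideal is principal has principal maximal ideal, so we are back in the situation of $(a)$, and the computation just given shows $R$ is a chain ring with ideals exactly $\langle w^i\rangle$, $0\le i\le e$. The cardinality formula $|\langle w^i\rangle|=|R/\langle w\rangle|^{e-i}$ I would prove by a descending induction on $i$ (or directly): the map $R/\langle w\rangle \to \langle w^i\rangle/\langle w^{i+1}\rangle$ sending $r+\langle w\rangle \mapsto w^i r + \langle w^{i+1}\rangle$ is a well-defined $R$-module isomorphism for $0\le i\le e-1$ — well-defined and injective precisely because $w^i r\in\langle w^{i+1}\rangle$ iff $r\in\langle w\rangle$ (using the unit-times-power normal form), and surjective by construction — so each successive quotient has size $|R/\langle w\rangle|$, and multiplying these up the chain from $\langle w^e\rangle=\{0\}$ yields $|\langle w^i\rangle| = |R/\langle w\rangle|^{\,e-i}$. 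Finally $(c)\Rightarrow(a)$ is immediate: a chain ring is in particular local (the union of all proper ideals is the unique maximal ideal, being itself an ideal by the chain condition), and its maximal ideal appears in the given list as $\langle w\rangle$, hence is principal.

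The only genuine obstacle is the nilpotency of $M$ in the preliminary step; after that, each implication is a routine normal-form argument, and the cardinality count is a telescoping product over the composition-series-like chain $\langle w^i\rangle/\langle w^{i+1}\rangle$. Since this proposition is quoted from \cite{dinh}, I would in practice either cite it directly or include the above as a short self-contained proof.
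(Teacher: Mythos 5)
Your proof is correct. Note, however, that the paper offers no proof of its own for this proposition: it is quoted verbatim from the cited reference \cite{dinh} (Dinh and L\'opez-Permouth), so there is nothing internal to compare against. Your argument is the standard one for finite chain rings: nilpotency of the maximal ideal via the stabilizing chain $M\supseteq M^2\supseteq\cdots$ plus Nakayama, the normal form $x=w^iv$ with $v$ a unit, and the telescoping count through the successive quotients $\langle w^i\rangle/\langle w^{i+1}\rangle\cong R/\langle w\rangle$. All three implications and the cardinality formula check out (in particular, injectivity of the map $r+\langle w\rangle\mapsto w^ir+\langle w^{i+1}\rangle$ does follow from the normal form as you indicate, since $w^i\in\langle w^{i+1}\rangle$ would force $w^i=0$ against $i<e$). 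Citing \cite{dinh} directly, as the paper does, would also be acceptable.
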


Now let $R$ be a finite commutative ring with unity and  let $N$ be a positive integer.  Let $R^N$ be the $R$-module consisting of all $N$-tuples over $R.$ For a unit $\lambda \in R,$ a $\lambda$-constacyclic code $\mathcal{C}$ of length $N$ over $R$ is defined as an $R$-submodule of $R^N$ satisfying the following property: $(a_0,a_1,\cdots,a_{N-1})\in \mathcal{C}$ implies that $(\lambda a_{N-1},a_0,a_1,\cdots,a_{N-2}) \in \mathcal{C}.$ The Hamming distance of $\mathcal{C},$ denoted by $d_H(\mathcal{C}),$ is defined as $d_H(\mathcal{C})=\min\{w_H(c): c \in \mathcal{C}\setminus \{0\}\},$ where  $w_H(c)$ is the number of non-zero components of $c$ and is called the Hamming weight of $c.$ The Rosenbloom-Tsfasman (RT) distance  of the code $\mathcal{C},$ denoted by $d_{RT}(\mathcal{C}),$ is defined as $d_{RT}(\mathcal{C})=\min\{ w_{RT}(c) | c\in \mathcal{C} \setminus \{ 0\}\},$ where  $w_{RT}(c)$ is the RT weight of $c$ and is defined as  \vspace{-2mm}\begin{equation*}w_{RT}(c)=\left\{\begin{array}{ll} 1+ \max \{ j | c_j \neq 0\} & \text{if } c=(c_0,c_1,\cdots,c_{N-1})  \neq 0; \\ 0 & \text{if }c=0.\end{array}\right.\vspace{-2mm}\end{equation*} The Rosenbloom-Tsfasman (RT)  weight distribution of $\mathcal{C}$ is defined as the list $\mathcal{A}_{0}, \mathcal{A}_{1},\cdots,\mathcal{A}_{N},$ where for $0 \leq \rho \leq N,$ $\mathcal{A}_{\rho}$ denotes the number of codewords in $\mathcal{C}$ having the RT weight as $\rho.$ The Hamming distance of a code is a measure of its error-detecting and error-correcting capabilities, while RT distances and RT weight distributions have applications in uniform distributions.

The dual code of $\mathcal{C},$ denoted by $\mathcal{C}^{\perp},$ is defined as $\mathcal{C}^{\perp}=\{u \in R^N: u.c=0 \text{ for all }c \in R^N\},$ where $u.c=u_0c_0+u_1c_1+\cdots + u_{N-1}c_{N-1}$ for $u=(u_0,u_1,\cdots,u_{N-1})$ and $c=(c_0,c_1,\cdots,c_{N-1})$ in $R^N.$ It is easy to observe that $\mathcal{C}^{\perp}$ is a $\lambda^{-1}$-constacyclic code of length $N$ over $R.$ The code $\mathcal{C}$ is said to be isodual if it is $R$-linearly equivalent to its dual code $\mathcal{C}^{\perp}.$
Under the standard $R$-module isomorphism $\psi: R^N \rightarrow R[x]/\langle x^N-\lambda\rangle ,$ defined as $\psi(a_0,a_1,\cdots,a_{N-1})=a_0+a_1 x+\cdots+a_{N-1}x^{N-1}+\langle x^N-\lambda \rangle$ for each $(a_0,a_1,\cdots,a_{N-1})\in R^N,$ the code $\mathcal{C}$ can be identified as an ideal of the ring $R[x]/\langle x^N-\lambda\rangle .$ Thus the study of $\lambda$-constacyclic codes of length $N$ over $R$ is equivalent to the study of ideals of the quotient ring $R[x]/\langle x^N-\lambda\rangle .$ From this point on, we shall represent elements of $R[x]/\langle x^N-\lambda \rangle$ by their representatives in $R[x]$ of degree less than $N,$ and we shall perform their addition and multiplication modulo $x^N-\lambda.$ Further, it is easy to see that the Hamming weight $w_H(c(x))$ of $c(x) \in R[x]/\langle x^N-\lambda \rangle$ is defined  as the number of non-zero coefficients of $c(x)$ and  the RT weight $w_{RT}(c(x))$ of $c(x) \in R[x]/\langle x^N-\lambda \rangle$ is defined as $w_{RT}(c(x))=\left\{\begin{array}{ll} 1+\text{deg }c(x) & \text{if }c(x) \neq 0;\\ 0 & \text{if }c(x)=0,\end{array}\right.$  (throughout this paper, $\text{deg }f(x)$ denotes the degree of a non-zero polynomial $f(x) \in R[x]$). The dual code $\mathcal{C}^{\perp}$ of $\mathcal{C}$  is given by $\mathcal{C}^{\perp}=\{u(x) \in  R[x]/\langle x^N-\lambda^{-1}\rangle : u(x)c^*(x)=0 \text{ in } R[x]/\langle x^N-\lambda^{-1}\rangle  \text{ for all } c(x) \in \mathcal{C}\},$ where $c^*(x)=x^{\text{deg }c(x)}c(x^{-1})$ for all $c(x) \in \mathcal{C}\setminus\{0\}$ and $c^*(x)=0$ if $c(x)=0.$  The annihilator of $\mathcal{C}$ is defined as $\text{ann}(\mathcal{C})=\{f(x) \in R[x]/\langle x^N-\lambda\rangle : f(x) c(x)=0 \text{ in }R[x]/\langle x^N-\lambda\rangle  \text{ for all } c(x) \in \mathcal{C}\}.$ One can easily observe that $\text{ann}(\mathcal{C})$ is an ideal of $R[x]/\langle x^N-\lambda\rangle .$ Further, for any ideal $I$ of $R[x]/\langle x^N-\lambda \rangle ,$ we define $I^*=\{f^*(x): f(x) \in I\},$ where $f^*(x)= x^{\text{deg }f(x)}f(x^{-1})$ if $f(x) \neq 0$ and $f^*(x)=0$ if $f(x)=0.$ It is easy to see that $I^*$ is an ideal of the ring $R[x]/\langle x^N-\lambda^{-1}\rangle .$ Now the following holds.                          
\vspace{-1mm}\begin{lem}\cite{chen} \label{pr2}If $\mathcal{C} \subseteq R[x]/\langle x^N-\lambda\rangle $ is a $\lambda$-constacyclic code of length $N$ over $R,$ then we have $\mathcal{C}^{\perp}=\text{ann}(\mathcal{C})^*.$ \end{lem}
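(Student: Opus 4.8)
The plan is to build on the description of $\mathcal{C}^{\perp}$ recorded just above, namely $\mathcal{C}^{\perp}=\{u(x)\in R[x]/\langle x^{N}-\lambda^{-1}\rangle: u(x)c^{*}(x)=0 \text{ in } R[x]/\langle x^{N}-\lambda^{-1}\rangle \text{ for all } c(x)\in\mathcal{C}\}$, and to identify its right-hand side with $\text{ann}(\mathcal{C})^{*}$. First I would establish the inclusion $\text{ann}(\mathcal{C})^{*}\subseteq\mathcal{C}^{\perp}$. Let $f(x)\in\text{ann}(\mathcal{C})$ and $c(x)\in\mathcal{C}$; then $f(x)c(x)=0$ in $R[x]/\langle x^{N}-\lambda\rangle$, so as a polynomial identity in $R[x]$ we have $f(x)c(x)=(x^{N}-\lambda)q(x)$ for some $q(x)\in R[x]$, and a quick look at leading terms gives $\deg q(x)\leq\deg f(x)+\deg c(x)-N$ (when $q(x)\neq 0$). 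Replacing $x$ by $x^{-1}$ and multiplying through by $x^{\deg f(x)+\deg c(x)}$ turns this into $f^{*}(x)c^{*}(x)=-\lambda(x^{N}-\lambda^{-1})\,x^{\deg f(x)+\deg c(x)-N}q(x^{-1})$, and the degree bound on $q(x)$ makes the last factor a genuine polynomial; hence $f^{*}(x)c^{*}(x)=0$ in $R[x]/\langle x^{N}-\lambda^{-1}\rangle$, i.e.\ $f^{*}(x)\in\mathcal{C}^{\perp}$.

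For the reverse inclusion I would pass to the map $\phi\colon R[x]/\langle x^{N}-\lambda\rangle\to R[x]/\langle x^{N}-\lambda^{-1}\rangle$ determined by $r\mapsto r$ for $r\in R$ and $x\mapsto x^{-1}$; since $x^{N}=\lambda^{-1}$ in the target, $x^{-1}=\lambda x^{N-1}$ is a unit there and $(x^{-1})^{N}-\lambda=\lambda-\lambda=0$, so a straightforward check shows $\phi$ is a ring isomorphism. A direct computation gives $f^{*}(x)=x^{\deg f(x)}\phi(f(x))$ for every representative $f(x)$ of degree less than $N$, so $f^{*}(x)$ and $\phi(f(x))$ differ only by the unit $x^{\deg f(x)}$. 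Consequently $u(x)c^{*}(x)=0$ in the target ring if and only if $u(x)\phi(c(x))=0$, whence $\mathcal{C}^{\perp}=\text{ann}(\phi(\mathcal{C}))$ (annihilator now taken inside $R[x]/\langle x^{N}-\lambda^{-1}\rangle$); and since $\phi$ is a ring isomorphism it carries annihilators to annihilators, so $\mathcal{C}^{\perp}=\text{ann}(\phi(\mathcal{C}))=\phi(\text{ann}(\mathcal{C}))$. As $\text{ann}(\mathcal{C})$ is an ideal and each $f^{*}(x)$ is a unit multiple of $\phi(f(x))$, the set $\text{ann}(\mathcal{C})^{*}$ and the ideal $\phi(\text{ann}(\mathcal{C}))$ generate the same $\lambda^{-1}$-constacyclic code; together with the first inclusion this yields $\mathcal{C}^{\perp}=\text{ann}(\mathcal{C})^{*}$.

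The step I expect to be the main obstacle is precisely this last matching: the reciprocation $f(x)\mapsto f^{*}(x)$ depends on $\deg f(x)$, is not multiplicative, and as a bare set map need not even send an ideal to an ideal, so the identity $\mathcal{C}^{\perp}=\text{ann}(\mathcal{C})^{*}$ has to be understood at the level of the generated constacyclic code, and the cleanest way to keep the degree bookkeeping under control is to factor everything through the isomorphism $\phi$ as above. An alternative to the second paragraph is a counting argument: having shown $\text{ann}(\mathcal{C})^{*}\subseteq\mathcal{C}^{\perp}$, one uses that $R$ is a finite chain ring and $R[x]/\langle x^{N}-\lambda\rangle$ is a free finite-rank algebra over it — so both are finite Frobenius rings — to conclude $|\mathcal{C}|\,|\mathcal{C}^{\perp}|=|R|^{N}=|\mathcal{C}|\,|\text{ann}(\mathcal{C})|$, hence $|\mathcal{C}^{\perp}|=|\phi(\text{ann}(\mathcal{C}))|$, and equality follows.
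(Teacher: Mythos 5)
The paper offers no proof of this lemma at all --- it is quoted from \cite{chen} as a known result --- so there is nothing to compare line by line; what can be said is that your argument is correct and supplies exactly the standard proof. The inclusion $\text{ann}(\mathcal{C})^{*}\subseteq\mathcal{C}^{\perp}$ via the identity $f(x)c(x)=(x^{N}-\lambda)q(x)$, the degree bound $\deg q(x)\leq \deg f(x)+\deg c(x)-N$ (valid because $x^{N}-\lambda$ is monic), and the substitution $x\mapsto x^{-1}$ all check out, and the isomorphism $\phi$ with $\phi(x)=\lambda x^{N-1}$ is well defined since $(x^{-1})^{N}-\lambda=\lambda-\lambda=0$ in $R[x]/\langle x^{N}-\lambda^{-1}\rangle$. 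Your caveat in the last paragraph is not pedantry but a genuine point: as a literal set, $\text{ann}(\mathcal{C})^{*}=\{f^{*}(x):f(x)\in\text{ann}(\mathcal{C})\}$ need not even be an ideal (over $\mathbb{F}_{2}$ with $N=3$, $\lambda=1$ and $\mathcal{C}$ the repetition code, $\text{ann}(\mathcal{C})=\langle 1+x\rangle$ has four elements while $\{f^{*}\}=\{0,1+x,1+x^{2}\}$ has three, since every nonzero $f^{*}$ has nonzero constant term and $(1+x)^{*}=(x+x^{2})^{*}$), so the paper's preceding assertion that ``$I^{*}$ is an ideal'' and the equality $\mathcal{C}^{\perp}=\text{ann}(\mathcal{C})^{*}$ are both to be read modulo the identification $\langle I^{*}\rangle=\phi(I)$, exactly as you do via the unit factors $x^{\deg f(x)}$. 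Your closing remark that one could instead finish by the counting identity $|\mathcal{C}|\,|\mathcal{C}^{\perp}|=|R|^{N}=|\mathcal{C}|\,|\text{ann}(\mathcal{C})|$ would need the same identification anyway (the set map $f\mapsto f^{*}$ is not injective, so $|\text{ann}(\mathcal{C})^{*}|$ can be strictly smaller than $|\text{ann}(\mathcal{C})|$), so the route through $\phi$ that you make primary is the right one.
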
\vspace{-1mm}
From this point on, throughout this paper, let $R$ be the ring $\mathcal{R}=\mathbb{F}_{p^m}[u]/\langle u^3 \rangle.$ It is easy to observe that $\mathcal{R}=\mathbb{F}_{p^m}+u\mathbb{F}_{p^m}+u^2 \mathbb{F}_{p^m}$ with $u^3=0, $ and that any element $\lambda \in \mathcal{R}$ can be uniquely expressed as $\lambda=\alpha+u \beta  + u^2 \gamma ,$ where $\alpha,\beta, \gamma \in \mathbb{F}_{p^m}.$ Now we make the following observation.
\vspace{-1mm}\begin{lem} \cite{chen}\label{pr5} Let $\lambda  =\alpha+ u \beta  +u^2 \gamma  \in \mathcal{R},$ where $\alpha,\beta, \gamma \in \mathbb{F}_{p^m}.$  Then the following hold. \begin{enumerate}\vspace{-2mm}\item[(a)] $\lambda$ is a unit in $\mathcal{R}$ if and only if $\alpha \neq 0.$ \vspace{-2mm}\item[(b)]  There exists $\alpha_0 \in \mathbb{F}_{p^m}$ satisfying $\alpha_0^{p^s}=\alpha.$ \end{enumerate}\end{lem}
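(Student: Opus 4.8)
The plan is to dispose of the two parts by elementary ring‑theoretic arguments, treating part (a) by an explicit inversion/nilpotency dichotomy and part (b) via the Frobenius map.

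For part (a), I would argue as follows. If $\alpha\neq 0$, then $\alpha$ is a unit in $\mathbb{F}_{p^m}$, and I can factor $\lambda=\alpha\bigl(1+u(\alpha^{-1}\beta+u\alpha^{-1}\gamma)\bigr)=\alpha(1+u\delta)$ with $\delta=\alpha^{-1}\beta+u\alpha^{-1}\gamma\in\mathcal{R}$. Since $u^3=0$ we have $(u\delta)^3=u^3\delta^3=0$, so $1+u\delta$ is a unit with inverse $1-u\delta+(u\delta)^2$ (a direct expansion gives the product $1+(u\delta)^3=1$); hence $\lambda$ is a unit. Conversely, if $\alpha=0$, then $\lambda=u(\beta+u\gamma)$ satisfies $\lambda^3=u^3(\beta+u\gamma)^3=0$, so $\lambda$ is nilpotent, and a nilpotent element of a ring with unity cannot be a unit: if $\lambda^3=0$ and $\lambda\mu=1$, then $1=(\lambda\mu)^3=\lambda^3\mu^3=0$, a contradiction. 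This establishes (a); equivalently, one sees that $\langle u\rangle$ is exactly the set of non-units, so it is the unique maximal ideal of $\mathcal{R}$ and Proposition \ref{pr1} applies.

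For part (b), I would use the Frobenius endomorphism. The map $\sigma\colon\mathbb{F}_{p^m}\to\mathbb{F}_{p^m}$ defined by $\sigma(x)=x^{p}$ is a ring homomorphism (by the binomial theorem in characteristic $p$) with trivial kernel, hence injective, and therefore bijective since $\mathbb{F}_{p^m}$ is finite. Consequently its $s$‑fold composite $\sigma^{s}\colon x\mapsto x^{p^{s}}$ is again a bijection of $\mathbb{F}_{p^m}$, in particular surjective, so applying surjectivity to $\alpha$ yields $\alpha_0\in\mathbb{F}_{p^m}$ with $\alpha_0^{p^{s}}=\alpha$. (Alternatively, take $\alpha_0=0$ when $\alpha=0$, and for $\alpha\neq 0$ note that $\mathbb{F}_{p^m}^{\times}$ is cyclic of order $p^m-1$, which is coprime to $p$, so $x\mapsto x^{p^{s}}$ permutes $\mathbb{F}_{p^m}^{\times}$.)

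I do not expect a genuine obstacle here, as both assertions are standard properties of the chain ring $\mathcal{R}$ and of finite fields. The only points that need a little care are verifying that $1+u\delta$ really is invertible — which hinges entirely on the relation $u^3=0$ — and, in part (b), not overlooking the degenerate case $\alpha=0$, for which $\alpha_0=0$ works trivially.
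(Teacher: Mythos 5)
Your proof is correct. Note that the paper itself offers no proof of this lemma --- it is imported from the reference \cite{chen} --- so there is nothing to compare against line by line; your two arguments (inverting $1+u\delta$ via the nilpotency $(u\delta)^3=0$, and the surjectivity of the Frobenius power $x\mapsto x^{p^s}$ on the finite field $\mathbb{F}_{p^m}$) are exactly the standard ones that establish the cited result, including the degenerate case $\alpha=0$ in part (b).
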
\vspace{-1mm}

 The following theorem is useful in the determination of  Hamming distances of repeated-root constacyclic codes over $\mathcal{R}$ and is an extension of Theorem 3.4 of Dinh \cite{dinh4}.

\vspace{-1mm}\begin{thm}\label{dthm} For $\eta  \in \mathbb{F}_{p^m} \setminus \{0\},$ there exists $\eta_0 \in \mathbb{F}_{p^m}$ satisfying $\eta=\eta_0^{p^s}.$ Suppose that the polynomial $x^n-\eta_0$ is irreducible over $\mathbb{F}_{p^m}.$ Let $\mathcal{C}$ be an $\eta$-constacyclic code of length $np^s$ over $\mathbb{F}_{p^m}.$ Then we have $\mathcal{C}= \langle (x^n-\eta_0)^{\upsilon} \rangle ,$ where $0 \leq \upsilon \leq p^s.$ Moreover, the  Hamming distance $d_H(\mathcal{C})$ of the code $\mathcal{C}$ is given by 
\vspace{-2mm}\begin{equation*}d_H(\mathcal{C})=\left\{\begin{array}{ll}
1 & \text{ if } \upsilon=0;\\
\ell+2 & \text{ if } \ell p^{s-1}+1 \leq \upsilon \leq (\ell +1)p^{s-1} \text{ with } 0 \leq \ell \leq p-2;\\
(i+1)p^k & \text{ if } p^s-p^{s-k}+(i-1)p^{s-k-1}+1\leq \upsilon \leq p^s-p^{s-k}+ip^{s-k-1} \\ & \text{ with } 1\leq i \leq p-1 \text{ and } 1 \leq k \leq s-1;\\
0  & \text{ if } \upsilon=p^s.
\end{array}\right. \vspace{-2mm}\end{equation*}
\end{thm}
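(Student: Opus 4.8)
The plan is to reduce the statement to the case $n=1$ — which is precisely Theorem~3.4 of Dinh~\cite{dinh4} — by passing to a suitable subring of the ambient ring.

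\textit{Step 1: ring structure and the first assertion.} Since $\eta=\eta_0^{p^s}$ and $\mathrm{char}\,\Fpm=p$, we have $x^{np^s}-\eta=x^{np^s}-\eta_0^{p^s}=(x^n-\eta_0)^{p^s}$ in $\Fpm[x]$. As $x^n-\eta_0$ is irreducible over $\Fpm$, it is a prime of the principal ideal domain $\Fpm[x]$, so $B:=\Fpm[x]/\langle x^{np^s}-\eta\rangle=\Fpm[x]/\langle(x^n-\eta_0)^{p^s}\rangle$ is a local ring with principal maximal ideal $\langle x^n-\eta_0\rangle$, in which $x^n-\eta_0$ has nilpotency index exactly $p^s$ (indeed $(x^n-\eta_0)^{p^s-1}$ has degree $n(p^s-1)<np^s$, so it is nonzero in $B$). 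By Proposition~\ref{pr1}, $B$ is a finite chain ring whose ideals are precisely $\langle(x^n-\eta_0)^{\upsilon}\rangle$, $0\le\upsilon\le p^s$; identifying $\mathcal{C}$ with an ideal of $B$ via $\psi$ gives $\mathcal{C}=\langle(x^n-\eta_0)^{\upsilon}\rangle$. The cases $\upsilon=0$ (then $\mathcal{C}=B$, so $d_H(\mathcal{C})=1$ via the codeword $1$) and $\upsilon=p^s$ (then $\mathcal{C}=\{0\}$, so $d_H(\mathcal{C})=0$) are immediate; assume $1\le\upsilon\le p^s-1$ henceforth.

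\textit{Step 2: reduction to length $p^s$.} Set $\theta:=x^n\in B$, so $\theta^{p^s}=x^{np^s}=\eta$ in $B$. The elements $1,\theta,\dots,\theta^{p^s-1}$ are $\Fpm$-linearly independent in $B$ (their $x$-degrees $0,n,\dots,(p^s-1)n$ are distinct and less than $np^s$), so the subring $\Fpm[\theta]$ of $B$ is isomorphic to $\Fpm[y]/\langle y^{p^s}-\eta\rangle$ (the ambient ring of $\eta$-constacyclic codes of length $p^s$ over $\Fpm$) via the map sending $y$ to $\theta$; moreover the monomials $x^{i+nk}$ with $0\le i<n$ and $0\le k<p^s$ exhaust the standard $\Fpm$-basis of $B$, so $1,x,\dots,x^{n-1}$ is a basis of $B$ as a free module over $\Fpm[\theta]$. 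As $(x^n-\eta_0)^{\upsilon}=(\theta-\eta_0)^{\upsilon}\in\Fpm[\theta]$, we obtain
\[
\mathcal{C}=(\theta-\eta_0)^{\upsilon}B=\bigoplus_{i=0}^{n-1}x^{i}\,(\theta-\eta_0)^{\upsilon}\,\Fpm[\theta],
\]
so every $c(x)\in\mathcal{C}$ has a unique expression $c(x)=\sum_{i=0}^{n-1}x^{i}c_i(\theta)$ with $c_i(\theta)\in(\theta-\eta_0)^{\upsilon}\Fpm[\theta]$ for all $i$. Since the monomials $x^{i+nk}$ are pairwise distinct, the summands $x^{i}c_i(\theta)$ have pairwise disjoint supports, so $w_H(c(x))=\sum_{i=0}^{n-1}w_H(c_i)$, where $w_H(c_i)$ denotes the Hamming weight of $c_i$ regarded as a codeword of the length-$p^s$ $\eta$-constacyclic code $\mathcal{C}':=\langle(y-\eta_0)^{\upsilon}\rangle$. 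Choosing one $c_i$ of minimum weight in $\mathcal{C}'$ and the others zero gives $d_H(\mathcal{C})\le d_H(\mathcal{C}')$, while every nonzero choice of $(c_0,\dots,c_{n-1})$ yields a sum that is at least $d_H(\mathcal{C}')$; hence $d_H(\mathcal{C})=d_H(\mathcal{C}')$.

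\textit{Step 3: the $n=1$ case, and the main difficulty.} It remains to evaluate $d_H(\mathcal{C}')=d_H(\langle(y-\eta_0)^{\upsilon}\rangle)$ for $\eta$-constacyclic codes of length $p^s$ over $\Fpm$; this is exactly Theorem~3.4 of Dinh~\cite{dinh4}, which yields the stated piecewise formula. Within the present argument the only points that require care — the chain-ring structure of $B$ and the exact nilpotency index in Step~1, and in Step~2 the ring isomorphism $\Fpm[\theta]\cong\Fpm[y]/\langle y^{p^s}-\eta\rangle$, the freeness of $B$ over $\Fpm[\theta]$, and the additivity $w_H(c(x))=\sum_i w_H(c_i)$ — are routine dimension counts and monomial bookkeeping. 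The genuinely intricate part of the whole result lies in the length-$p^s$ distance computation of \cite{dinh4}: the explicit minimum-weight codewords $(y-\eta_0)^{(\ell+1)p^{s-1}}$ and $(y^{p^{s-k}}-\eta_0^{p^{s-k}})^{p^k-1}(y^{p^{s-k-1}}-\eta_0^{p^{s-k-1}})^{i}$ giving the upper bounds, together with the induction on $s$ (which descends to the subring $\Fpm[y^{p}]$, the base case $s=1$ resting on a Vandermonde/Hasse-derivative argument) giving the matching lower bounds. Since this is the content of \cite{dinh4}, it is not reproduced here.
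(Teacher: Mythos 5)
Your proposal is correct, but it proves the theorem by a cleaner route than the paper, whose entire proof is the single sentence ``Working in a similar way as in Theorem 3.4 of Dinh \cite{dinh4}, the desired result follows,'' i.e.\ an invitation to re-run Dinh's length-$p^s$ argument with $x^n-\eta_0$ in place of $x-\eta_0$ throughout. You instead reduce the length-$np^s$ statement to the literal statement of Dinh's Theorem 3.4: you observe that $B=\mathbb{F}_{p^m}[x]/\langle (x^n-\eta_0)^{p^s}\rangle$ is free of rank $n$ over the subring $\mathbb{F}_{p^m}[\theta]\cong \mathbb{F}_{p^m}[y]/\langle y^{p^s}-\eta\rangle$ with $\theta=x^n$ and basis $1,x,\dots,x^{n-1}$, that the ideal $\langle(x^n-\eta_0)^{\upsilon}\rangle$ decomposes accordingly into $n$ copies of the length-$p^s$ code $\langle(y-\eta_0)^{\upsilon}\rangle$ with pairwise disjoint supports, and that Hamming weight is therefore additive across the decomposition, giving $d_H(\mathcal{C})=d_H(\mathcal{C}')$. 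The classification part ($B$ a chain ring with ideals $\langle(x^n-\eta_0)^{\upsilon}\rangle$, $0\le\upsilon\le p^s$) and the weight-additivity bookkeeping are all verified correctly. What your approach buys is that Dinh's result is used only as a black box, so the reader need not check that every step of his induction on $s$ and his explicit minimum-weight codewords survive the substitution $x\mapsto x^n$; what the paper's approach buys is brevity, at the price of leaving that verification implicit. Both arguments ultimately rest on the same length-$p^s$ distance computation of \cite{dinh4}, which neither reproduces, and your reduction is a legitimate and arguably more transparent way to obtain the stated formula.
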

 \begin{proof} Working in a similar way as in Theorem 3.4 of Dinh \cite{dinh4}, the desired result follows.\vspace{-2mm}\end{proof}

  Next we proceed to study algebraic structures of all constacyclic codes of length $N=np^s$ over the ring $\mathcal{R}=\mathbb{F}_{p^m}+u\mathbb{F}_{p^m}+u^2 \mathbb{F}_{p^m},$ where $u^3=0,$ $p$ is a prime and $n, s,m$ are positive integers with $\gcd(n,p)=1.$
\vspace{-4mm} \section{Constacyclic codes of length $np^s$ over $\mathcal{R}$}\label{sec3}\vspace{-2mm}

Throughout this paper, let $p$ be a prime and let $n,s, m$ be positive integers with $\gcd(n,p)=1.$ Let $\mathbb{F}_{p^m}$ be the finite field of order $p^m,$ and let $\mathcal{R}=\mathbb{F}_{p^m}[u]/\langle u^3 \rangle$ be the finite commutative chain ring with unity. Let $\lambda=\alpha+\beta u +\gamma u^2,$ where $\alpha, \beta, \gamma \in \mathbb{F}_{p^m}$  and  $\alpha$ is non-zero. In this section, we willl determine all $\lambda$-constacyclic codes of length $np^s$ over $\mathcal{R}$ and their dual codes. We will  also determine the number of codewords in each code. Apart from this, we shall list some isodual constacyclic codes of length $np^s$ over $\mathcal{R}.$

To do this, we recall that a $\lambda$-constacyclic code of length $np^s$ over $\mathcal{R}$ is an ideal of the quotient ring $\mathcal{R}_{\lambda}=\mathcal{R}[x]/\langle x^{np^s}-\lambda\rangle .$ Further, by Lemma \ref{pr5}(b), there exists $\alpha_0 \in \mathbb{F}_{p^m}$ satisfying $\alpha_0^{p^s}=\alpha.$ Now let $x^n-\alpha_0=f_1(x)f_2(x)\cdots f_r(x)$ be the irreducible factorization of $x^n-\alpha_0$ over $\mathbb{F}_{p^m},$ where $f_1(x),f_2(x),\cdots,$ $f_r(x)$ are pairwise coprime monic (irreducible) polynomials over $\mathbb{F}_{p^m}.$  In the following lemma,  we factorize the polynomial $x^{np^s}-\lambda$ into pairwise coprime polynomials in $\mathcal{R}[x].$
\vspace{-1mm} \begin{lem} \label{fac} We have \begin{equation*}x^{np^s}-\lambda= \prod\limits_{j=1}^{r} \left(f_j(x)^{p^s}+ug_j(x)+u^2 h_j(x)\right),\end{equation*} where the polynomials  $g_1(x), g_2(x),\cdots, g_r(x), h_1(x), h_2(x),\cdots,h_r(x) \in \mathbb{F}_{p^m}[x]$ satisfy the following for $1 \leq j \leq r:$
  \begin{itemize} \vspace{-2mm} \item  $\gcd(f_j(x),g_j(x) ) =1$ when  $\beta \neq 0.$\vspace{-2mm}\item $g_j(x)=h_j(x)=0$ when $\beta=\gamma =0.$ \vspace{-2mm}\item  $g_j(x)=0$ and $\gcd(f_j(x),h_j(x))=1$ in $\mathbb{F}_{p^m}[x]$ when $\beta =0$ and $\gamma$ is non-zero. 
   \vspace{-2mm}\end{itemize}
  Moreover, the polynomials $f_1(x)^{p^s}+u g_1(x)+u^2 h_1(x), f_2(x)^{p^s}+u g_2(x)+u^2 h_2(x),\cdots,f_r(x)^{p^s}+u g_r(x)+u^2 h_r(x)$ are pairwise coprime in $\mathcal{R}[x].$
  \end{lem}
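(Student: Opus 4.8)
The plan is to start from the factorization $x^n - \alpha_0 = f_1(x) f_2(x) \cdots f_r(x)$ over $\mathbb{F}_{p^m}$ and raise it to the $p^s$-th power; since the characteristic is $p$, this gives $x^{np^s} - \alpha_0^{p^s} = x^{np^s} - \alpha = \prod_{j=1}^{r} f_j(x)^{p^s}$ in $\mathbb{F}_{p^m}[x]$, and hence in $\mathcal{R}[x]$. I would then write $x^{np^s} - \lambda = \prod_{j=1}^r f_j(x)^{p^s} - u\beta - u^2\gamma$ and argue that this difference can be redistributed as a product of the form $\prod_{j=1}^r \bigl(f_j(x)^{p^s} + u g_j(x) + u^2 h_j(x)\bigr)$ for suitable $g_j, h_j \in \mathbb{F}_{p^m}[x]$. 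The natural tool is a Hensel-type / CRT lifting argument: since the $f_j(x)^{p^s}$ are pairwise coprime in $\mathbb{F}_{p^m}[x]$, and $u$ is nilpotent in $\mathcal{R}$, one lifts the factorization of $x^{np^s}-\alpha$ modulo $u$ first to a factorization modulo $u^2$ and then modulo $u^3 = 0$, adjusting one factor at a time so that the extra $u$- and $u^2$-terms $-u\beta - u^2\gamma$ get absorbed. Concretely, by Bézout there exist $a_j(x) \in \mathbb{F}_{p^m}[x]$ with $\sum_j a_j(x)\,\prod_{k\neq j} f_k(x)^{p^s} = 1$; setting $g_j(x)$ equal to $-\beta\, a_j(x)$ reduced appropriately gives the first-order correction, and a second round gives $h_j(x)$. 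I would do this lift carefully in two stages (mod $u^2$, then mod $u^3$), tracking that at each stage the product telescopes to $x^{np^s} - \lambda$ modulo the appropriate power of $u$.

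Next I would establish the stated coprimality conditions on $g_j, h_j$ with $f_j$. When $\beta \neq 0$: from the lifting, $g_j(x) \equiv -\beta\, a_j(x) \pmod{f_j(x)}$, and since $f_j(x) \mid \prod_{k\neq j} f_k(x)^{p^s} \cdot a_j(x) - (\text{stuff coprime to } f_j)$ one checks $a_j(x)$ is a unit modulo $f_j(x)$ (because $\prod_{k\neq j} f_k(x)^{p^s}$ is invertible mod the irreducible $f_j$), so $g_j(x)$ is nonzero mod $f_j(x)$, i.e. $\gcd(f_j, g_j) = 1$. When $\beta = \gamma = 0$: then $\lambda = \alpha$, the correction terms vanish, so $g_j = h_j = 0$ and the factorization reduces to $\prod_j f_j(x)^{p^s}$. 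When $\beta = 0$, $\gamma \neq 0$: the first-order term vanishes ($g_j = 0$) because there is no $u$-part to absorb, and the $u^2$-correction $h_j(x) \equiv -\gamma\, a_j(x) \pmod{f_j(x)}$ is again a unit mod $f_j$, giving $\gcd(f_j, h_j) = 1$. I expect these verifications to be short once the lifting is set up correctly.

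Finally, I would prove pairwise coprimality of the lifted factors $F_j(x) := f_j(x)^{p^s} + u g_j(x) + u^2 h_j(x)$ in $\mathcal{R}[x]$. The key point is that $\mathcal{R}[x]/\langle u \rangle \cong \mathbb{F}_{p^m}[x]$, and modulo $u$ we have $F_j(x) \equiv f_j(x)^{p^s}$, which are pairwise coprime. Thus for $j \neq k$, $\langle F_j, F_k \rangle + \langle u \rangle = \mathcal{R}[x]$, i.e. $\langle F_j, F_k, u\rangle = \mathcal{R}[x]$; since $u$ is nilpotent (Nakayama-type argument, or a direct check that if $1 = a F_j + b F_k + c u$ then $1 - cu$ is a unit so $F_j, F_k$ already generate the unit ideal), it follows that $\langle F_j, F_k\rangle = \mathcal{R}[x]$, i.e. $F_j$ and $F_k$ are coprime in $\mathcal{R}[x]$.

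The main obstacle will be the lifting step: one must produce $g_j, h_j$ explicitly (or at least show they exist) so that the product is \emph{exactly} $x^{np^s} - \lambda$ rather than merely congruent to it modulo some power of $u$. This requires handling the $u^2$-level corrections while accounting for the cross-terms $u g_j(x) \cdot u g_k(x)$ that appear when expanding the product — these contribute at the $u^2$ level and must be folded into the $h_j$'s. Organizing this bookkeeping cleanly (e.g. by induction on the number of factors, or by an explicit Hensel iteration in the nilpotent variable $u$) is the crux; everything else is routine once the factors are in hand.
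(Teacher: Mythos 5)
Your proposal is correct and follows essentially the same route as the paper: both rest on B\'ezout identities among the pairwise coprime $f_j(x)^{p^s}$ together with a Hensel-type lift of the factorization of $x^{np^s}-\alpha$ through the nilpotent element $u$, and both verify the $\gcd$ conditions by observing that the relevant B\'ezout coefficients are units modulo the irreducible $f_j(x)$. The only difference is organizational: the paper executes the lift by peeling off one factor at a time (the induction on the number of factors you mention as one option), which yields closed-form expressions for $g_j(x)$ and $h_j(x)$ in a single pass instead of your two-stage lift modulo $u^2$ and then $u^3$.
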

\begin{proof} To prove the result, we see that  \vspace{-2mm}\begin{equation}\label{poly} x^{np^s}-\lambda=(x^n-\alpha_0)^{p^s} -\beta u - \gamma u^2=f_1(x)^{p^s}f_2(x)^{p^s}\cdots f_r(x)^{p^s} -\beta u-u^2 \gamma.\vspace{-2mm}\end{equation}
Next we observe that for  $1 \leq j \leq r-1,$ the polynomials $f_j(x)^{p^s}$ and $\prod\limits_{i=j+1}^{r} f_{i}(x)^{p^s}$ are coprime in $\mathbb{F}_{p^m}[x],$  which implies that there exist $v_j(x), w_j(x) \in \mathbb{F}_{p^m}[x]$ satisfying $\text{deg }w_j(x) < \text{deg }f_j(x)^{p^s}$ and   \vspace{-2mm}\begin{equation}\label{gc} v_j(x)f_j(x)^{p^s}+w_j(x) \prod\limits_{i=j+1}^{r} f_{i}(x)^{p^s}=1.\vspace{-2mm}\end{equation}
Now by \eqref{poly} and \eqref{gc}, we obtain
 \vspace{-2mm}\begin{equation*}x^{np^s}-\lambda=\left\{f_1(x)^{p^s}-u \beta w_1(x)-u^2 w_1(x) \big(\gamma +\beta ^2 v_1(x)w_1(x)\big)\right\} \left\{
 \prod\limits_{i=2}^{r}f_i(x)^{p^s}-u\beta v_1(x) -u^2 v_1(x) \big(\gamma +\beta^2 v_1(x)w_1(x)\big)\right\}.\vspace{-2mm}\end{equation*} Further, using \eqref{gc} again,  we get  \vspace{-2mm}\begin{eqnarray*}\prod\limits_{i=2}^{r}f_i(x)^{p^s}-u\beta v_1(x) -u^2 v_1(x) \{\gamma +\beta^2 v_1(x)w_1(x)\}=\left\{f_2(x)^{p^s}-u\beta v_1(x)w_2(x)-u^2 v_1(x)w_2(x) \big(\gamma  +\beta^2v_1(x)w_1(x)\right. \\ \left. +\beta^2 v_1(x) v_2(x)w_2(x)\big)\right\} \left\{ \prod\limits_{i=3}^{r}f_i(x)^{p^s}-u \beta v_1(x)v_2(x)-u^2 v_1(x)v_2(x) \big(\gamma +\beta^2 v_1(x)w_1(x)+\beta^2 v_1(x)v_2(x)w_2(x)\big)\right\}.\vspace{-2mm}\end{eqnarray*} Proceeding like this, we obtain $x^{np^s}-\lambda =\prod\limits_{j=1}^{r}\Big(f_j(x)^{p^s}+ug_j(x)+u^2h_j(x) \Big)$ with  $g_1(x)=-\beta$ and $h_1(x)=-\gamma$ when $r=1;$ and  $g_{j}(x)=-\beta w_{j}(x) \prod\limits_{i=1}^{j-1} v_{i}(x)$ and $h_{j}(x)=-w_{j}(x)\prod\limits_{i=1}^{j-1}v_{i}(x) \Big( \gamma +\beta^2\sum\limits_{\ell=1}^{j}v_1(x)v_2(x)v_3(x)\cdots v_{\ell}(x)w_{\ell}(x) \Big)$ for $1 \leq j \leq r$ when $r \geq 2.$ From this, the desired result follows. \end{proof}\vspace{-2mm}

From now on, we define $k_j(x)=f_j(x)^{p^s}+ug_j(x)+u^2h_j(x)$ for $1 \leq j \leq r.$ Then we have  $x^{np^s}-\lambda =\prod\limits_{j=1}^{r}k_j(x).$ Further, if $\text{deg }f_j(x)=d_j,$ then we observe that $\text{deg }k_j(x)=d_jp^s$ for each $j.$ By Lemma \ref{fac}, we  see that  $k_1(x),k_2(x),\cdots,k_r(x)$ are pairwise coprime in $\mathcal{R}[x].$  This, by Chinese Remainder Theorem, implies that
  \vspace{-2mm}\begin{equation*}\label{decomk} \mathcal{R}_{\lambda}\simeq \bigoplus\limits_{j=1}^{r} \mathcal{K}_{j},\vspace{-2mm}\end{equation*} where $\mathcal{K}_{j}=\mathcal{R}[x]/\left<k_j(x)\right>$ for  $1 \leq j \leq r.$ Then  we observe the following:
 \begin{prop}\label{p1}\begin{enumerate}\vspace{-1mm}\item[(a)] Let $\mathcal{C}$ be a $\lambda$-constacyclic code of length $np^s$ over $\mathcal{R},$ i.e.,  an ideal of the ring $\mathcal{R}_{\lambda}.$ Then $\mathcal{C}=\mathcal{C}_1\oplus \mathcal{C}_2\oplus \cdots \oplus \mathcal{C}_r,$ where $\mathcal{C}_j$ is an ideal of $\mathcal{K}_{j}$ for $1 \leq j \leq r.$
\vspace{-2mm}\item[(b)] If $I_j$ is an ideal of $\mathcal{K}_{j}$ for $1 \leq j \leq r,$ then $I=I_1 \oplus I_2\oplus \cdots \oplus I_r$ is an ideal of $\mathcal{R}_{\lambda}$ (i.e., $I$ is a $\lambda$-constacyclic code of length $np^s$ over $\mathcal{R}$).  Moreover, we have $|I|=|I_1||I_2|\cdots |I_r|.$\vspace{-2mm}\end{enumerate}\end{prop}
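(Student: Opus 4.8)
The plan is to deduce both parts directly from the ring isomorphism $\mathcal{R}_{\lambda}\simeq\bigoplus_{j=1}^{r}\mathcal{K}_{j}$ recorded just above, which comes from the Chinese Remainder Theorem applied to the pairwise coprime factors $k_1(x),k_2(x),\cdots,k_r(x)$ of $x^{np^s}-\lambda$ in $\mathcal{R}[x]$. First I would make this isomorphism concrete: let $\phi\colon\mathcal{R}_{\lambda}\to\bigoplus_{j=1}^{r}\mathcal{K}_{j}$ send $a(x)$ to $\big(a(x)\bmod k_1(x),\cdots,a(x)\bmod k_r(x)\big)$, and for each $j$ let $\varepsilon_j(x)\in\mathcal{R}_{\lambda}$ be the element with $\phi(\varepsilon_j)$ equal to the tuple having $1$ in position $j$ and $0$ elsewhere (concretely, $\varepsilon_j(x)$ is obtained from a B\'ezout relation between $k_j(x)$ and $\prod_{i\neq j}k_i(x)$). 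Then $\varepsilon_1(x),\cdots,\varepsilon_r(x)$ are pairwise orthogonal idempotents of $\mathcal{R}_{\lambda}$ with $\sum_{j}\varepsilon_j(x)=1$, and $\varepsilon_j(x)\mathcal{R}_{\lambda}$ is a ring isomorphic to $\mathcal{K}_{j}$; these are the only ingredients the argument needs.

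For part (a), given an ideal $\mathcal{C}$ of $\mathcal{R}_{\lambda}$, I would write each $c(x)\in\mathcal{C}$ as $c(x)=\sum_{j}\varepsilon_j(x)c(x)$ using $\sum_j\varepsilon_j(x)=1$, note that the summand $\varepsilon_j(x)c(x)$ lies in $\varepsilon_j(x)\mathcal{C}$, and use the orthogonality relations $\varepsilon_i(x)\varepsilon_j(x)=0$ for $i\neq j$ to see that the submodules $\varepsilon_j(x)\mathcal{R}_{\lambda}$ meet pairwise in $\{0\}$; this yields $\mathcal{C}=\varepsilon_1(x)\mathcal{C}\oplus\cdots\oplus\varepsilon_r(x)\mathcal{C}$ as an internal direct sum. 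Since each $\varepsilon_j(x)$ is an idempotent in the commutative ring $\mathcal{R}_{\lambda}$, the set $\mathcal{C}_j:=\varepsilon_j(x)\mathcal{C}$ is an ideal of $\varepsilon_j(x)\mathcal{R}_{\lambda}$, and transporting it through the isomorphism $\varepsilon_j(x)\mathcal{R}_{\lambda}\cong\mathcal{K}_{j}$ identifies $\mathcal{C}_j$ with an ideal of $\mathcal{K}_{j}$, as claimed.

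For part (b), conversely, given ideals $I_j$ of $\mathcal{K}_{j}$ for $1\leq j\leq r$, I would observe that $I_1\oplus\cdots\oplus I_r$ is visibly an ideal of the product ring $\bigoplus_{j}\mathcal{K}_{j}$ (it is additively closed and absorbs componentwise multiplication by arbitrary tuples), so the corresponding subset $I=I_1\oplus\cdots\oplus I_r$ of $\mathcal{R}_{\lambda}$, under the identification $\mathcal{R}_{\lambda}\simeq\bigoplus_{j}\mathcal{K}_{j}$, is an ideal of $\mathcal{R}_{\lambda}$, i.e., a $\lambda$-constacyclic code of length $np^s$ over $\mathcal{R}$. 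Finally, since $\phi$ is a bijection and a direct sum is the Cartesian product at the level of underlying sets, $|I|=|I_1||I_2|\cdots|I_r|$.

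I do not expect any genuine obstacle here: the content is pure Chinese Remainder Theorem bookkeeping, and parts of it are just the standard fact that ideals of a finite direct product of rings are exactly direct sums of ideals of the factors. The only step demanding a little care is checking that the internal sum $\varepsilon_1(x)\mathcal{C}\oplus\cdots\oplus\varepsilon_r(x)\mathcal{C}$ really is direct and equals $\mathcal{C}$, which is precisely what the idempotent relations $\sum_j\varepsilon_j(x)=1$ and $\varepsilon_i(x)\varepsilon_j(x)=\delta_{ij}\varepsilon_i(x)$ deliver. This proposition is the structural backbone that lets the subsequent analysis proceed one local factor $\mathcal{K}_{j}$ at a time.
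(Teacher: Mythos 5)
Your proof is correct and follows exactly the route the paper intends: the paper simply declares the proof trivial, relying on the Chinese Remainder Theorem decomposition $\mathcal{R}_{\lambda}\simeq\bigoplus_{j=1}^{r}\mathcal{K}_{j}$ stated immediately beforehand, and your idempotent bookkeeping is the standard way to make that explicit. No gaps.
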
\vspace{-2mm} \begin{proof} Proof is trivial. \end{proof}\vspace{-1mm}

Next if $\mathcal{C}$ is a $\lambda$-constacyclic code of length $np^s$ over $\mathcal{R},$ then its dual code $\mathcal{C}^{\perp}$ is a $\lambda^{-1}$-constacyclic code of length $np^s$ over $\mathcal{R}.$  This implies that $\mathcal{C}^{\perp}$ is an ideal of the ring $\mathcal{R}_{\lambda^{-1}}=\mathcal{R}[x]/\langle x^{np^s}-\lambda^{-1}  \rangle.$ In order to determine $\mathcal{C}^{\perp}$ more explicitly,  we observe that $x^{np^s}-\lambda^{-1}=-\alpha^{-1}k_1^*(x)k_2^*(x)\cdots k_r^*(x).$ By applying Chinese Remainder Theorem again, we get $\mathcal{R}_{\lambda^{-1}}\simeq \bigoplus\limits_{j=1}^{r}\widehat{\mathcal{K}_{j}},$ where $\widehat{\mathcal{K}_{j}}=\mathcal{R}[x]/\langle k_j^*(x)\rangle $ for $1 \leq j \leq r.$ Then we have  the following:
\vspace{-1mm}  \begin{prop}\label{p2} Let $\mathcal{C}$ be a $\lambda$-constacyclic code of length $np^s$ over $\mathcal{R},$ i.e.,  an ideal of the ring $\mathcal{R}_{\lambda}.$ If $\mathcal{C}=\mathcal{C}_1\oplus \mathcal{C}_2\oplus \cdots \oplus \mathcal{C}_r$ with $\mathcal{C}_j$ an ideal of $\mathcal{K}_{j}$ for each $j,$  then the dual code $\mathcal{C}^{\perp}$ of $\mathcal{C}$ is given by $\mathcal{C}^{\perp}=\mathcal{C}_1^{\perp}\oplus \mathcal{C}_2^{\perp}\oplus \cdots \oplus \mathcal{C}_r^{\perp},$ where $\mathcal{C}_j^{\perp} =\{a_j(x) \in \widehat{\mathcal{K}_{j}}: a_j(x) c_j^*(x)=0 \text{ in }\widehat{\mathcal{K}_{j}} \text{ for all }c_j(x) \in \mathcal{C}_{j}\}$ is the orthogonal complement of  $\mathcal{C}_j$  for each $j.$ Furthermore, $\mathcal{C}_j^{\perp}$ is an ideal of $\widehat{\mathcal{K}_{j}}=\mathcal{R}[x]/\langle k_j^*(x)\rangle $ for each $j.$ \vspace{-2mm}\end{prop}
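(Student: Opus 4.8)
The plan is to transport the duality through the two Chinese Remainder isomorphisms and reduce it to a statement about the individual coordinates. Recall that $\mathcal{R}_{\lambda}\simeq\bigoplus_{j=1}^{r}\mathcal{K}_{j}$ and $\mathcal{R}_{\lambda^{-1}}\simeq\bigoplus_{j=1}^{r}\widehat{\mathcal{K}_{j}}$, where both isomorphisms are the coordinatewise reduction maps -- modulo the $k_j(x)$ in the first case and modulo the $k_j^{*}(x)$ in the second (recall $x^{np^{s}}-\lambda^{-1}=-\alpha^{-1}\prod_{j}k_j^{*}(x)$, and that the $k_j^{*}(x)$ are pairwise coprime in $\mathcal{R}[x]$). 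I will use the description $\mathcal{C}^{\perp}=\{u(x)\in\mathcal{R}_{\lambda^{-1}}:u(x)c^{*}(x)=0 \text{ in }\mathcal{R}_{\lambda^{-1}}\text{ for all }c(x)\in\mathcal{C}\}$ recorded in Section \ref{prelim}; equivalently one could route through Lemma \ref{pr2}, first checking $\text{ann}(\mathcal{C})=\bigoplus_{j}\text{ann}_{\mathcal{K}_{j}}(\mathcal{C}_{j})$ (multiplication being componentwise on $\bigoplus_{j}\mathcal{K}_{j}$) and then applying $*$. The only step that is not purely formal is to identify, for $c(x)\in\mathcal{R}_{\lambda}$ with $j$-th coordinate $c_j(x):=c(x)\bmod k_j(x)\in\mathcal{K}_{j}$, the $j$-th coordinate of $c^{*}(x)$ in $\bigoplus_{j}\widehat{\mathcal{K}_{j}}$.

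To settle this I would write $c(x)=c_j(x)+k_j(x)s_j(x)$ in $\mathcal{R}[x]$ with $\deg c_j(x)<d_jp^{s}=\deg k_j(x)$. Since $k_j(x)$ is monic, no leading-term cancellation occurs, so $\deg c(x)\ge\deg c_j(x)$; substituting $x\mapsto x^{-1}$ and multiplying through by $x^{\deg c(x)}$ then gives $c^{*}(x)\equiv x^{\delta_j}c_j^{*}(x)\pmod{k_j^{*}(x)}$, where $\delta_j:=\deg c(x)-\deg c_j(x)\ge0$ (the cases $c(x)=0$ or $c_j(x)=0$ being trivial). Moreover $k_j^{*}(0)$ is the leading coefficient of $k_j(x)$, which is $1$ plus a nilpotent and hence a unit of $\mathcal{R}$, so $x$ is a unit in $\widehat{\mathcal{K}_{j}}$; thus the $j$-th coordinate of $c^{*}(x)$ differs from $c_j^{*}(x)$ only by the unit $x^{\delta_j}$.

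The remainder is routine. Fix $u(x)\in\mathcal{R}_{\lambda^{-1}}$ with coordinates $u_j(x)\in\widehat{\mathcal{K}_{j}}$, and take $c(x)\in\mathcal{C}$, whose coordinates $c_j(x)$ lie in $\mathcal{C}_{j}$ by hypothesis. The product $u(x)c^{*}(x)$ vanishes in $\mathcal{R}_{\lambda^{-1}}$ exactly when $u_j(x)x^{\delta_j}c_j^{*}(x)=0$ in $\widehat{\mathcal{K}_{j}}$ for every $j$, hence -- cancelling the units $x^{\delta_j}$ -- exactly when $u_j(x)c_j^{*}(x)=0$ for every $j$. Letting $c(x)$ range over $\mathcal{C}=\bigoplus_{j}\mathcal{C}_{j}$ (and in particular over codewords all of whose coordinates but one are $0$), we conclude that $u(x)\in\mathcal{C}^{\perp}$ if and only if $u_j(x)c_j^{*}(x)=0$ for every $c_j(x)\in\mathcal{C}_{j}$ and every $j$, that is, $u_j(x)\in\mathcal{C}_j^{\perp}$ for every $j$, where $\mathcal{C}_j^{\perp}=\{a_j(x)\in\widehat{\mathcal{K}_{j}}:a_j(x)c_j^{*}(x)=0 \text{ in }\widehat{\mathcal{K}_{j}}\text{ for all }c_j(x)\in\mathcal{C}_{j}\}$. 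Therefore $\mathcal{C}^{\perp}=\mathcal{C}_1^{\perp}\oplus\cdots\oplus\mathcal{C}_r^{\perp}$; each $\mathcal{C}_j^{\perp}$ is an ideal of $\widehat{\mathcal{K}_{j}}$, being manifestly closed under addition and under multiplication by $\widehat{\mathcal{K}_{j}}$; and $|\mathcal{C}^{\perp}|=|\mathcal{C}_1^{\perp}|\cdots|\mathcal{C}_r^{\perp}|$ by Proposition \ref{p1}(b).

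I expect the main obstacle to be precisely the second step above: the reciprocation $f(x)\mapsto f^{*}(x)$ is not a ring homomorphism, and because $\mathcal{R}$ is not an integral domain the degree of a product can drop, so one must track the spurious powers of $x$ carefully and distinguish reduction modulo $k_j(x)$ from reduction modulo $x^{np^{s}}-\lambda$. The substantive point is that $*$ sends the $j$-th CRT component to the $j$-th component with no reindexing -- which is exactly what the normalization $x^{np^{s}}-\lambda^{-1}=-\alpha^{-1}\prod_{j}k_j^{*}(x)$, together with the pairwise coprimality of the $k_j^{*}(x)$, is arranged to guarantee.
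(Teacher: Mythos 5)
Your proof is correct; the paper itself dismisses this proposition with ``Its proof is straightforward,'' and your argument supplies exactly the details the authors leave implicit, namely the two CRT decompositions together with the observation that reciprocation sends the $j$-th coordinate $c_j(x)$ of $c(x)$ to $x^{\delta_j}c_j^*(x)$ modulo $k_j^*(x)$, with $x^{\delta_j}$ a unit in $\widehat{\mathcal{K}_j}$ because the constant term $k_j^*(0)$ is the (unit) leading coefficient of $k_j(x)$. This is the intended route, and your identification of the degree bookkeeping in $c^*(x)$ as the only non-formal step is accurate.
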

\vspace{-2mm}\begin{proof}   Its proof is straightforward. \end{proof}\vspace{-1mm}
In view of Propositions \ref{p1} and \ref{p2}, we see that to determine all $\lambda$-constacyclic codes of length $np^s$ over $\mathcal{R},$ their sizes  and their dual codes, we need to determine all ideals of the ring $\mathcal{K}_{j},$ their cardinalities and their orthogonal complements in $\widehat{\mathcal{K}_{j}}$ for $1 \leq j \leq r.$  To do so, throughout this paper, let $1 \leq j \leq r$ be a fixed integer. From now onwards, we shall represent elements of the rings $\mathcal{K}_{j}$ and $\widehat{\mathcal{K}_{j}}$ (resp. $\mathbb{F}_{p^m}[x]/\langle f_j(x)^{p^s}\rangle$) by their representatives in $\mathcal{R}[x]$ (resp. $\mathbb{F}_{p^m}[x]$) of degree less than $d_jp^s$ (resp. $d_jp^s$) and we shall perform their addition and multiplication modulo $k_j(x)$ and $k_j^*(x)$ (resp.  $f_j(x)^{p^s}$), respectively. 
To determine all ideals of the ring $\mathcal{K}_{j},$  we need to prove the following lemma.
\vspace{-1mm} \begin{lem} \label{nilred} Let $1 \leq j \leq r$ be fixed. In the ring $\mathcal{K}_{j},$ the following hold.
 \begin{enumerate}\vspace{-2mm}\item[(a)] Any non-zero polynomial $g(x) \in \mathbb{F}_{p^m}[x]$ satisfying $\gcd(g(x),f_j(x))=1$ is a unit in $\mathcal{K}_{j}.$ As a consequence, any non-zero polynomial in $\mathbb{F}_{p^m}[x]$ of degree less than $d_j$ is a unit in $\mathcal{K}_{j}.$ \vspace{-1mm}\item[(b)] $\left<f_j(x)^{p^s}\right>=\left\{\begin{array}{ll}\left<u\right> & \text{if }\beta \neq 0;\\\left<u^2\right> & \text{if }\beta=0 \text{ and } \gamma \neq 0; \\ \{0\} & \text{if } \beta=\gamma =0.\end{array}\right.$ 
 
 As a consequence, $f_j(x)$ is a nilpotent element of $\mathcal{K}_{j}$ with the nilpotency index as $3p^{s}$ when $\beta \neq 0,$ the nilpotency index of $f_j(x)$ is $2p^s$ when $\beta=0$ and $\gamma \neq 0,$ while the nilpotency index of $f_j(x)$ is $p^s$ when $\beta=\gamma=0.$
\vspace{-2mm}\end{enumerate}\vspace{-2mm}\end{lem}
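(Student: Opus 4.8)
The plan is to prove part (a) first and then read off part (b) from it together with Lemma \ref{fac}. For part (a), the crucial observation is that $f_j(x)^{p^s}$ is a nilpotent element of $\mathcal{K}_{j}$: modulo $k_j(x)$ one has $f_j(x)^{p^s} \equiv -u g_j(x) - u^2 h_j(x)$, whose cube vanishes since $u^3 = 0$ in $\mathcal{R}$. Given a non-zero $g(x) \in \mathbb{F}_{p^m}[x]$ with $\gcd(g(x), f_j(x)) = 1$, irreducibility of $f_j(x)$ yields $\gcd(g(x), f_j(x)^{p^s}) = 1$ in $\mathbb{F}_{p^m}[x]$, so $a(x) g(x) + b(x) f_j(x)^{p^s} = 1$ for some $a(x), b(x) \in \mathbb{F}_{p^m}[x]$; reading this in $\mathcal{K}_{j}$ gives $a(x) g(x) = 1 - b(x) f_j(x)^{p^s}$, and since $b(x) f_j(x)^{p^s}$ is nilpotent the right-hand side is a unit, hence so is $g(x)$. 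The stated consequence is then immediate, since a non-zero $g(x) \in \mathbb{F}_{p^m}[x]$ of degree less than $d_j = \deg f_j(x)$ cannot be divisible by the irreducible polynomial $f_j(x)$ and is therefore coprime to it.

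For part (b), the description of $\langle f_j(x)^{p^s}\rangle$ splits into the three cases of Lemma \ref{fac}. If $\beta \neq 0$, then $g_j(x)$ is coprime to $f_j(x)$, hence a unit in $\mathcal{K}_{j}$ by part (a), so $-g_j(x) - u h_j(x)$ is also a unit (a unit minus a nilpotent), and $f_j(x)^{p^s} \equiv u\bigl(-g_j(x) - u h_j(x)\bigr)$ in $\mathcal{K}_{j}$ gives $\langle f_j(x)^{p^s}\rangle = \langle u\rangle$. If $\beta = 0$ and $\gamma \neq 0$, then $g_j(x) = 0$ and $h_j(x)$ is a unit, so $f_j(x)^{p^s} \equiv -u^2 h_j(x)$ gives $\langle f_j(x)^{p^s}\rangle = \langle u^2\rangle$; and if $\beta = \gamma = 0$ then $f_j(x)^{p^s} \equiv 0$, i.e.\ $\langle f_j(x)^{p^s}\rangle = \{0\}$. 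For the nilpotency indices I would argue directly in $\mathcal{K}_{j}$ rather than invoking a chain-ring structure. When $\beta \neq 0$, using $u^3 = 0$ one finds $f_j(x)^{3p^s} \equiv 0$ while $f_j(x)^{3p^s - 1} \equiv f_j(x)^{p^s-1} u^2 g_j(x)^2$; were the latter zero, $k_j(x)$ would divide $f_j(x)^{p^s-1} u^2 g_j(x)^2$ in $\mathcal{R}[x]$, and equating coefficients of $1, u, u^2$ in a hypothetical quotient would force $f_j(x) \mid g_j(x)^2$ in $\mathbb{F}_{p^m}[x]$, contradicting $\gcd(f_j(x), g_j(x)) = 1$; hence the nilpotency index is exactly $3p^s$. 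The case $\beta = 0$, $\gamma \neq 0$ is analogous, with $f_j(x)^{2p^s} \equiv 0$ and $f_j(x)^{2p^s-1} \equiv -f_j(x)^{p^s-1} u^2 h_j(x)$, the non-vanishing reducing to $\gcd(f_j(x), h_j(x)) = 1$, so the index is $2p^s$; and when $\beta = \gamma = 0$ the ring is $\mathcal{R}[x]/\langle f_j(x)^{p^s}\rangle$ and $f_j(x)^{p^s-1}$ has degree less than $d_j p^s$, so the index is $p^s$.

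The one genuine idea in part (a) is the nilpotency of $f_j(x)^{p^s}$ in $\mathcal{K}_{j}$, which converts a B\'ezout identity over $\mathbb{F}_{p^m}[x]$ into an actual inverse in $\mathcal{K}_{j}$; everything else there is routine. In part (b) the only delicate point is establishing \emph{exactness} of the nilpotency index, i.e.\ that $f_j(x)^{e-1} \neq 0$: this is where the coefficient bookkeeping modulo powers of $u$ is needed in order to reduce the question to a divisibility statement in the integral domain $\mathbb{F}_{p^m}[x]$, at which point the coprimality assertions of Lemma \ref{fac} finish the argument.
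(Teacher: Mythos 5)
Your proposal is correct and follows essentially the same route as the paper: for (a), a B\'ezout identity over $\mathbb{F}_{p^m}[x]$ is converted into an inverse in $\mathcal{K}_{j}$ by noting that the discrepancy $b(x)f_j(x)^{p^s}\equiv -b(x)\bigl(ug_j(x)+u^2h_j(x)\bigr)$ is nilpotent, and (b) is read off from the coprimality conditions of Lemma \ref{fac}. The only difference is that the paper dismisses (b) as immediate, whereas you supply the details --- in particular the coefficient-of-$u^2$ argument showing $f_j(x)^{e-1}\neq 0$, which correctly reduces exactness of the nilpotency index to $f_j(x)\nmid g_j(x)^2$ (resp.\ $f_j(x)\nmid h_j(x)$) in $\mathbb{F}_{p^m}[x]$.
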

\vspace{-2mm} \begin{proof} {\it (a)} As $f_j(x)$ is irreducible over $\mathbb{F}_{p^m}$ and  $\gcd(g(x),f_j(x))=1,$ we have $\gcd(g(x),f_j(x)^{p^s})=1$ in $\mathbb{F}_{p^m}[x],$ which implies that there exist polynomials $a(x),b(x) \in \mathbb{F}_{p^m}[x]$ such that $a(x)g(x)+b(x)f_j(x)^{p^s}=1.$ This implies that $a(x)g(x)+b(x) (f_j(x)^{p^s}+ug_j(x)+u^2h_j(x))=1+ub(x)(g_j(x)+uh_j(x)).$ From this, we get $a(x)g(x)=1+ub(x)(g_j(x)+uh_j(x))$ in $\mathcal{K}_{j}.$ As $u^3=0$ in $\mathcal{K}_{j},$ we see that  $1+ub(x)(g_j(x)+uh_j(x))$ is a unit in $\mathcal{K}_{j},$ which implies that $g(x)$ is a unit in $\mathcal{K}_{j}.$ 
 \\{\it (b)}  It follows immediately from Lemma \ref{fac} and part (a).
\vspace{-1mm} \end{proof}
Next for a positive integer $k,$ let $\mathcal{P}_{k}(\mathbb{F}_{p^m})=\{g(x) \in \mathbb{F}_{p^m}[x]: \text{ either }g(x)=0 \text{ or }\text{deg }g(x) < k\}.$ Note that every element $a(x) \in \mathcal{K}_j$ can be uniquely expressed as $a(x)=a_0(x)+ua_1(x)+u^2 a_2(x),$ where  $a_0(x),a_1(x),a_2(x) \in \mathcal{P}_{d_jp^s}(\mathbb{F}_{p^m}).$ Further, by repeatedly applying division algorithm in $\mathbb{F}_{p^m}[x],$ for $\ell \in \{0,1,2\},$ we can write $a_{\ell}(x)=\sum\limits_{i=0}^{p^s-1}  A_i^{(a_{\ell})}(x)f_j(x)^i,$  where $A_{i}^{(a_{\ell})}(x) \in \mathcal{P}_{d_j
}(\mathbb{F}_{p^m})$ for $0 \leq i \leq p^s-1.$  That is, each element $a(x) \in \mathcal{K}_{j}$ can be uniquely expressed as $a(x)=\sum\limits_{i=0}^{p^s-1}  A_i^{(a_0)}(x)f_j(x)^i+u \sum\limits_{i=0}^{p^s-1}  A_i^{(a_1)}(x)f_j(x)^i+u^2 \sum\limits_{i=0}^{p^s-1}  A_i^{(a_2)}(x)f_j(x)^i,$ where $A_i^{(a_{\ell})}(x) \in \mathcal{P}_{d_j}(\mathbb{F}_{p^m})$ for each $i$ and $\ell.$ 
Now to determine cardinalities of all ideals of $\mathcal{K}_{j},$ we  prove the following lemma.
\vspace{-1mm}\begin{lem} \label{card} Let $1 \leq j \leq r$ be a fixed integer. If $\mathcal{I}$ is an ideal of $\mathcal{K}_j,$ then  $\text{Res}_{u}(\mathcal{I})=\{a_0(x) \in \mathbb{F}_{p^m}[x]/\langle f_j(x)^{p^s}\rangle : a_0(x)+ua_1(x)+u^2 a_2(x) \in \mathcal{I} \text{ for some }a_1(x),a_2(x) \in \mathbb{F}_{p^m}[x]/\langle f_j(x)^{p^s}\rangle \},$ $\text{Tor}_{u}(\mathcal{I})=\{a_1(x) \in \mathbb{F}_{p^m}[x]/\langle f_j(x)^{p^s}\rangle : ua_1(x)+u^2 a_2(x) \in \mathcal{I} \text{ for some } a_2(x) \in \mathbb{F}_{p^m}[x]/\langle f_j(x)^{p^s}\rangle\}$ and $\text{Tor}_{u^2}(\mathcal{I})=\{a_2(x) \in \mathbb{F}_{p^m}[x]/\langle f_j(x)^{p^s}\rangle : u^2 a_2(x) \in \mathcal{I}\}$  are ideals of $\mathbb{F}_{p^m}[x]/\langle f_j(x)^{p^s}\rangle .$  Moreover, we have $|\mathcal{I}|=|Res_u(\mathcal{I})||\text{Tor}_{u}(\mathcal{I})| |\text{Tor}_{u^2}(\mathcal{I})|.$ 
\vspace{-2mm} \end{lem}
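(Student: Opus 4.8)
The plan is to derive both parts of the lemma from the $u$-adic filtration
\[
\mathcal{I}\;\supseteq\;\mathcal{I}\cap u\mathcal{K}_{j}\;\supseteq\;\mathcal{I}\cap u^{2}\mathcal{K}_{j}\;\supseteq\;\{0\}
\]
of $\mathcal{I},$ by identifying its three successive quotients, as modules over $S:=\mathbb{F}_{p^m}[x]/\langle f_{j}(x)^{p^{s}}\rangle,$ with $\text{Res}_{u}(\mathcal{I}),$ $\text{Tor}_{u}(\mathcal{I})$ and $\text{Tor}_{u^{2}}(\mathcal{I}).$ First I would record three preliminary facts, all immediate from the unique representation $a(x)=a_{0}(x)+ua_{1}(x)+u^{2}a_{2}(x)$ of an element of $\mathcal{K}_{j}$ (with each $a_{\ell}(x)$ of degree less than $d_{j}p^{s}$) together with the identity $f_{j}(x)^{p^{s}}=k_{j}(x)-ug_{j}(x)-u^{2}h_{j}(x)$ in $\mathcal{R}[x]$: (i) the map $\pi_{0}\colon\mathcal{K}_{j}\to S$ induced by $u\mapsto 0$ is a surjective ring homomorphism with kernel $u\mathcal{K}_{j}=\{ua_{1}(x)+u^{2}a_{2}(x):a_{1},a_{2}\in\mathcal{P}_{d_{j}p^{s}}(\mathbb{F}_{p^m})\}$; (ii) $\text{ann}_{\mathcal{K}_{j}}(u)=u^{2}\mathcal{K}_{j}=\{u^{2}a_{2}(x):a_{2}\in\mathcal{P}_{d_{j}p^{s}}(\mathbb{F}_{p^m})\}$ and $\text{ann}_{\mathcal{K}_{j}}(u^{2})=u\mathcal{K}_{j},$ whence, via multiplication by $u$ (resp.\ $u^{2}$) followed by $\pi_{0},$ one obtains $S$-module isomorphisms $u\mathcal{K}_{j}/u^{2}\mathcal{K}_{j}\cong S$ and $u^{2}\mathcal{K}_{j}\cong S,$ the latter identifying $b(x)\in S$ with $u^{2}b(x)\in u^{2}\mathcal{K}_{j}$; and (iii) for $b(x)\in\mathbb{F}_{p^m}[x]$ and $t\in\{1,2\},$ the reduction of $b(x)\,u^{t}a(x)$ modulo $k_{j}(x)$ equals $u^{t}\big(b(x)a(x)\bmod f_{j}(x)^{p^{s}}\big)$ modulo $u^{t+1}\mathcal{K}_{j},$ because the discarded multiples $u^{t}q(x)f_{j}(x)^{p^{s}}=u^{t}q(x)\big(k_{j}(x)-ug_{j}(x)-u^{2}h_{j}(x)\big)$ feed only into strictly higher $u$-layers.

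Granting these, $\text{Res}_{u}(\mathcal{I})=\pi_{0}(\mathcal{I})$ is the image of an ideal under a surjective ring homomorphism, hence an ideal of $S,$ and the first isomorphism theorem gives $\mathcal{I}/(\mathcal{I}\cap u\mathcal{K}_{j})\cong\text{Res}_{u}(\mathcal{I}).$ For $\text{Tor}_{u}(\mathcal{I})$ I would use the $\mathbb{F}_{p^m}[x]$-module homomorphism obtained by restricting the composite $u\mathcal{K}_{j}\twoheadrightarrow u\mathcal{K}_{j}/u^{2}\mathcal{K}_{j}\cong S$ to $\mathcal{I}\cap u\mathcal{K}_{j}$: by (iii) it sends $ua_{1}(x)+u^{2}a_{2}(x)$ to the class of $a_{1}(x),$ its image is exactly $\text{Tor}_{u}(\mathcal{I}),$ and its kernel is $\mathcal{I}\cap u^{2}\mathcal{K}_{j}$; since $f_{j}(x)^{p^{s}}$ acts trivially on $u\mathcal{K}_{j}/u^{2}\mathcal{K}_{j},$ the image is an $S$-submodule of $S,$ i.e.\ an ideal of $S,$ and $(\mathcal{I}\cap u\mathcal{K}_{j})/(\mathcal{I}\cap u^{2}\mathcal{K}_{j})\cong\text{Tor}_{u}(\mathcal{I}).$ Finally the isomorphism $u^{2}\mathcal{K}_{j}\cong S$ of (ii) carries $\mathcal{I}\cap u^{2}\mathcal{K}_{j}$ onto $\text{Tor}_{u^{2}}(\mathcal{I}),$ so $\text{Tor}_{u^{2}}(\mathcal{I})$ is an ideal of $S$ with $|\text{Tor}_{u^{2}}(\mathcal{I})|=|\mathcal{I}\cap u^{2}\mathcal{K}_{j}|.$ Multiplying the three cardinality relations $|\mathcal{I}|=|\text{Res}_{u}(\mathcal{I})|\,|\mathcal{I}\cap u\mathcal{K}_{j}|,$ $|\mathcal{I}\cap u\mathcal{K}_{j}|=|\text{Tor}_{u}(\mathcal{I})|\,|\mathcal{I}\cap u^{2}\mathcal{K}_{j}|$ and $|\mathcal{I}\cap u^{2}\mathcal{K}_{j}|=|\text{Tor}_{u^{2}}(\mathcal{I})|$ then yields $|\mathcal{I}|=|\text{Res}_{u}(\mathcal{I})|\,|\text{Tor}_{u}(\mathcal{I})|\,|\text{Tor}_{u^{2}}(\mathcal{I})|.$

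The one genuinely delicate point is item (iii). Because $k_{j}(x)=f_{j}(x)^{p^{s}}+ug_{j}(x)+u^{2}h_{j}(x)$ is not homogeneous in $u,$ reducing a product modulo $k_{j}(x)$ mixes the layers $\mathcal{K}_{j}\supseteq u\mathcal{K}_{j}\supseteq u^{2}\mathcal{K}_{j},$ and one has to check that this mixing always pushes mass from the $u^{t}$-layer into $u^{t+1}\mathcal{K}_{j}$ (which vanishes once $t\geq 2$), so that the $u$-coefficient read off by $\text{Tor}_{u}$ and the $u^{2}$-coefficient read off by $\text{Tor}_{u^{2}}$ transform exactly as multiplication in $S.$ Once this bookkeeping is in place, each of $\text{Res}_{u}(\mathcal{I}),$ $\text{Tor}_{u}(\mathcal{I}),$ $\text{Tor}_{u^{2}}(\mathcal{I})$ is visibly closed under addition and under multiplication by $x$ and by elements of $\mathbb{F}_{p^m},$ hence an ideal of $S,$ and the counting above is routine; a more pedestrian alternative would be to verify the ideal axioms for the three sets directly and then count $|\mathcal{I}|$ by choosing, for each element of $\mathcal{I},$ independent representatives of its three $u$-adic components modulo the corresponding $\text{Tor}$-ideals, which amounts to the same filtration argument written without exact sequences.
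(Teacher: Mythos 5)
Your proof is correct and follows essentially the same route as the paper: the filtration $\mathcal{I}\supseteq\mathcal{I}\cap u\mathcal{K}_j\supseteq\mathcal{I}\cap u^2\mathcal{K}_j$ with its successive quotients is exactly the paper's two-step argument via the surjective module homomorphisms $\phi:\mathcal{I}\to\text{Res}_u(\mathcal{I})$ (kernel $K_{\mathcal{I}}=\mathcal{I}\cap u\mathcal{K}_j$) and $\psi:K_{\mathcal{I}}\to\text{Tor}_u(\mathcal{I})$ (kernel $\mathcal{I}\cap u^2\mathcal{K}_j$), followed by multiplying the two index relations. The only difference is that you make explicit the bookkeeping in your item (iii) showing that reduction modulo the non-$u$-homogeneous $k_j(x)$ respects the layer structure, a point the paper dismisses as easily observed.
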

\vspace{-2mm}\begin{proof}  One can easily observe that  $\text{Res}_{u}(\mathcal{I}),$  $\text{Tor}_{u}(\mathcal{I})$ and $\text{Tor}_{u^2}(\mathcal{I})$  are  ideals of $\mathbb{F}_{p^m}[x]/\langle f_j(x)^{p^s}\rangle .$ In order to prove the second part,  we define a map   \begin{equation*}\phi : \mathcal{I} \rightarrow \text{Res}_{u}(\mathcal{I})\end{equation*} as $\phi(a(x))=a_0(x)$ for each $a(x)=a_0(x)+u a_1(x) +u^2 a_2(x)  \in \mathcal{I}$ with $a_0(x),a_1(x),a_2(x) \in \mathbb{F}_{p^m}[x]/\langle f_j(x)^{p^s}\rangle.$ We observe that $\phi$ is a surjective $\mathbb{F}_{p^m}[x]/\langle f_j(x)^{p^s}\rangle$-module homomorphism and its  kernel  is given by $K_{\mathcal{I}}=\{u a_1(x)  +u^2 a_2(x) \in \mathcal{I}: a_1(x),a_2(x) \in \mathbb{F}_{p^m}[x]/\langle f_j(x)^{p^s}\rangle\}.$ This implies that  \vspace{-2mm}\begin{equation}\label{cc}|\mathcal{I}|=|\text{Res}_{u}(\mathcal{I})| |K_{\mathcal{I}}|.\vspace{-2mm}\end{equation} We further define a map  \vspace{-2mm}\begin{equation*}\psi : K_{\mathcal{I}} \rightarrow \text{Tor}_{u}(\mathcal{I})\vspace{-1mm}\end{equation*} as $\psi(u a_1(x)+u^2 a_2(x))=a_1(x)$ for each $ua_1(x)+u^2 a_2(x) \in K_{\mathcal{I}},$ where $a_1(x),a_2(x) \in \mathbb{F}_{p^m}[x]/\langle f_j(x)^{p^s}\rangle.$ We see that $\psi$ is also a surjective $\mathbb{F}_{p^m}[x]/\langle f_j(x)^{p^s}\rangle$-module homomorphism with the kernel as $\text{ker } \psi =\{u^2 a_2(x) \in K_{\mathcal{I}}:  a_2(x) \in \mathbb{F}_{p^m}[x]/\langle f_j(x)^{p^s}\rangle\}.$ From this, it follows that  \vspace{-2mm}\begin{equation*}|K_{\mathcal{I}}|= |\text{Tor}_{u}(\mathcal{I})||\text{ker } \psi |=|\text{Tor}_{u}(\mathcal{I})||\text{Tor}_{u^2}(\mathcal{I})|,\end{equation*} which, by \eqref{cc}, implies that  \vspace{-2mm}\begin{equation*}|\mathcal{I}|=|\text{Res}_{u}(\mathcal{I})||\text{Tor}_{u}(\mathcal{I})||\text{Tor}_{u^2}(\mathcal{I})|.\vspace{-2mm}\end{equation*}
\vspace{-6mm}\end{proof}
To determine orthogonal complements of all ideals of $\mathcal{K}_{j},$ we  need  the following lemma.
\vspace{-1mm} \begin{lem} \label{lemdual} Let $1 \leq j \leq r $ be a fixed integer. Let $\mathcal{I}$ be an ideal of the ring $\mathcal{K}_{j}$ with the orthogonal complement as $\mathcal{I}^{\perp}.$ Then the following hold.
\begin{enumerate}\vspace{-2mm}\item[(a)] $\mathcal{I}^{\perp}$ is an ideal of $\widehat{\mathcal{K}_{j}}.$ \vspace{-2mm} \item[(b)] $\mathcal{I}^{\perp}=\{a^{*}(x)\in \widehat{\mathcal{K}_{j}}: a(x) \in \text{ann}(\mathcal{I})\}=\text{ann}(\mathcal{I})^*.$\vspace{-2mm}\item[(c)] If $\mathcal{I}=\langle f(x),ug(x),u^2 h(x)\rangle ,$ then we have $\mathcal{I}^{*}=\langle f^*(x),ug^*(x),u^2 h^*(x)\rangle .$
\vspace{-1mm}\item[(d)] For non-zero $f(x),g(x) \in \mathcal{K}_{j},$ let us define $(fg)(x)=f(x)g(x)$ and $(f+g)(x)=f(x)+g(x).$ If  $(fg)(x) \neq 0,$ then we have  $f^*(x) g^*(x)=x^{\text{deg }f(x)+\text{deg }g(x)-\text{deg }(fg)(x)}(fg)^*(x).$ If $(f+g)(x) \neq 0,$ then we have\vspace{2mm}\\
$(f+g)^*(x)=\left\{\begin{array}{ll} 
f^*(x)+x^{deg\, f(x)-deg\, g(x)}g^*(x)  & \text{if } \text{deg }f(x) > \text{deg } g(x);\\
x^{\text{deg }(f+g)(x)-\text{deg f(x)}}(f^*(x)+g^*(x)) & \text{if } \text{deg } f(x) = \text{deg }g(x).\end{array}\right.$
\vspace{-1mm}\end{enumerate}\end{lem}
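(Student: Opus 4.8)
The plan is to prove the four parts of Lemma~\ref{lemdual} mostly by unwinding definitions and invoking Lemma~\ref{pr2} together with the factorization set up before the statement. For part~(a), recall that $\mathcal{I}^{\perp}$ was \emph{defined} in Proposition~\ref{p2} as the orthogonal complement $\{a_j(x)\in\widehat{\mathcal{K}_j}: a_j(x)c_j^*(x)=0 \text{ in }\widehat{\mathcal{K}_j}\text{ for all }c_j(x)\in\mathcal{I}\}$, and Proposition~\ref{p2} already records that this is an ideal of $\widehat{\mathcal{K}_j}$; so part~(a) is immediate, and it suffices to cite that proposition (or to repeat the one-line verification that the set is closed under addition and under multiplication by arbitrary elements of $\widehat{\mathcal{K}_j}$, using that $\widehat{\mathcal{K}_j}$ is commutative).

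For part~(b), I would first show $\operatorname{ann}(\mathcal{I})^*\subseteq\mathcal{I}^{\perp}$: if $a(x)\in\operatorname{ann}(\mathcal{I})$, then $a(x)c(x)=0$ in $\mathcal{K}_j$ for every $c(x)\in\mathcal{I}$, and applying the $*$ operation (together with the multiplicativity rule of part~(d), which holds when the product is nonzero — here the product is $0$, so one argues directly that $a^*(x)c^*(x)=0$ up to a power of $x$, hence $=0$ since $x$ is a unit in $\widehat{\mathcal{K}_j}$) gives $a^*(x)c^*(x)=0$ in $\widehat{\mathcal{K}_j}$, i.e. $a^*(x)\in\mathcal{I}^{\perp}$. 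For the reverse inclusion, note that $*$ is an involution up to multiplication by units (powers of $x$), so $b(x)\in\mathcal{I}^{\perp}$ means $b(x)=a^*(x)$ for some $a(x)$ with $a^*(x)c^*(x)=0$ for all $c\in\mathcal{I}$; reversing each $c^*$ back to $c$ and using that the $*$ map is a bijection on the set of ideal elements, one deduces $a(x)\in\operatorname{ann}(\mathcal{I})$. The cleanest route is to observe that Lemma~\ref{pr2} applied to the ambient ring gives $\mathcal{C}^{\perp}=\operatorname{ann}(\mathcal{C})^*$ globally, then combine with Propositions~\ref{p1} and~\ref{p2} and the fact that annihilators and $*$ respect the CRT decomposition $\mathcal{R}_\lambda\simeq\bigoplus_j\mathcal{K}_j$, so the component-wise statement $\mathcal{I}^{\perp}=\operatorname{ann}(\mathcal{I})^*$ in $\widehat{\mathcal{K}_j}$ follows by restricting to the $j$th summand.

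For part~(c), suppose $\mathcal{I}=\langle f(x),ug(x),u^2h(x)\rangle$. Since $\mathcal{I}^*=\{a^*(x):a(x)\in\mathcal{I}\}$ and every element of $\mathcal{I}$ is an $\mathcal{K}_j$-combination $p(x)f(x)+q(x)ug(x)+t(x)u^2h(x)$, I would apply the addition rule of part~(d) repeatedly to express $(pf+quq g+tu^2h)^*$ as a combination (with coefficients that are powers of $x$ times the $p^*,q^*,t^*$) of $f^*,ug^*,u^2h^*$ — using here that $u^*=u$ and $(u^2)^*=u^2$ since reciprocation does not touch $u$ — which shows $\mathcal{I}^*\subseteq\langle f^*,ug^*,u^2h^*\rangle$; the reverse inclusion follows by applying the same argument to $\mathcal{I}^*$ and noting $(f^*)^*$ equals $f$ up to a unit. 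Part~(d) is the purely computational core: for the product formula, write $f(x)=\sum a_ix^i$ and $g(x)=\sum b_ix^j$ with top degrees $D_f,D_g$, so $f^*(x)=x^{D_f}f(x^{-1})$, $g^*(x)=x^{D_g}g(x^{-1})$, hence $f^*(x)g^*(x)=x^{D_f+D_g}(fg)(x^{-1})=x^{D_f+D_g-\deg(fg)}\cdot x^{\deg(fg)}(fg)(x^{-1})=x^{D_f+D_g-\deg(fg)}(fg)^*(x)$; the addition formula splits into the two degree cases and is verified the same way by factoring out the appropriate power of $x$.

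The main obstacle is bookkeeping rather than ideas: in parts~(b), (c) and~(d) one must be careful that the $*$ operation is only an involution \emph{up to multiplication by a power of $x$} (which is a unit in $\widehat{\mathcal{K}_j}$ because $x^{np^s}=\lambda^{-1}$ is a unit), and that $\deg$ is computed on the chosen degree-$<d_jp^s$ representatives; the possible drop in degree when forming products or sums in $\mathcal{K}_j$ or $\widehat{\mathcal{K}_j}$ is exactly what the correction exponents in part~(d) account for, and one must check these exponents are nonnegative and that dividing by $x$ is legitimate. Once part~(d) is in hand, parts~(b) and~(c) are short formal consequences, and part~(a) is already given.
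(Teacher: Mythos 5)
Your proposal is correct and is exactly the routine verification the paper has in mind (the paper itself dismisses this lemma with ``Its proof is straightforward''): part (d) is the direct computation with $x^{\deg f}f(x^{-1})$, and parts (a)--(c) follow formally from it together with the definitions and the fact that $x$ is a unit in $\widehat{\mathcal{K}_j}$. You are in fact more careful than the paper in flagging that $*$ is only an involution up to multiplication by a power of $x$ and that the degree-correction exponents arise from possible cancellation of leading coefficients (which can occur here since leading coefficients in $\mathcal{R}[x]$ may be zero divisors), so no gaps remain.
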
\begin{proof} Its proof is straightforward.\vspace{-2mm}\end{proof}

By the above lemma, we see that to determine $\mathcal{I}^{\perp},$ it is enough to determine $\text{ann}(\mathcal{I})$ for each ideal $\mathcal{I}$ of $\mathcal{K}_j.$ Further, to write down all ideals of $\mathcal{K}_{j},$ we see, by Lemma \ref{card}, that for each ideal  $\mathcal{I}$  of $\mathcal{K}_{j},$   $\text{Res}_{u}(\mathcal{I}),$ $\text{Tor}_{u}(\mathcal{I})$ and $\text{Tor}_{u^2}(\mathcal{I})$ all are ideals of the ring $\mathbb{F}_{p^m}[x]/\langle f_j(x)^{p^s}\rangle,$ which is a finite commutative chain ring with the maximal ideal as $\langle f_j(x)\rangle.$ Next by Proposition \ref{pr1}, we see that all the ideals of $\mathbb{F}_{p^m}[x]/\langle f_j(x)^{p^s}\rangle$ are given by $\langle f_j(x)^i \rangle$ with  $0 \leq i \leq p^s$ and that $|\langle {f_j(x)} ^{i}\rangle |=p^{md_j(p^s-i)}$ for each $i.$ This implies that $\text{Res}_{u}(\mathcal{I})=\langle f_j(x)^a \rangle,$ $\text{Tor}_{u}(\mathcal{I})=\langle f_j(x)^b\rangle $ and $\text{Tor}_{u^2}(\mathcal{I})=\langle f_j(x)^c\rangle$ for some integers $a,b,c$ satisfying $0 \leq c \leq b \leq a \leq p^s.$

First of all, we shall consider the case $\beta \neq 0.$ Here we see that when $\alpha_0=\mu^n$ for some $\mu \in \mathbb{F}_{p^m},$  each $\lambda$-constacyclic code of length $np^s$ over $\mathcal{R}$ is equivalent to a cyclic code of length $np^s$ over $\mathcal{R}$ and can be determined by using the results derived in Cao  \cite{cao1} via the map $\Psi: \mathcal{R}[x]/\langle x^{np^s}-1-\alpha^{-1}\beta u -\alpha^{-1}\gamma u^2 \rangle \rightarrow \mathcal{R}[x]/\langle x^{np^s}-\alpha-\beta u -\gamma u^2 \rangle,$ defined as $a(x) \mapsto a(\mu^{-1}x)$ for each $a(x) \in \mathcal{R}[x]/\langle x^{np^s}-1-\alpha^{-1}\beta u -\alpha^{-1}\gamma u^2 \rangle.$ However, when $\alpha_0$ (and hence $\alpha$) is not an $n$th power of an element in $\mathbb{F}_{p^m},$  this method can not be employed to determine all $(\alpha+\beta u +\gamma u^2)$-constacyclic codes of length $np^s$ over $\mathcal{R}.$ In fact, the problem of determination of all $(\alpha+\beta u +\gamma u^2)$-constacyclic codes of length $np^s$ over $\mathcal{R}$ and their dual codes is not yet completely solved.  Propositions \ref{p1} \&\ \ref{p2} and the following theorem completely solves this problem when $\beta$ is non-zero. \vspace{-1mm}\begin{thm}  \label{t0} When $\beta \neq 0,$ all ideals of the ring $\mathcal{K}_{j}$ are given by $\left< f_j(x)^{\ell}\right>,$ where $0 \leq \ell \leq 3p^s.$ Furthermore, for $0 \leq \ell \leq 3p^s,$ we have $|\left<f_j(x)^\ell \right>|=p^{m d_j (3p^s-\ell)}$ and $\text{ann}(\left<f_j(x)^{\ell}\right>)=\left<f_j(x)^{3p^s-\ell}\right>.$
\vspace{-2mm}\end{thm}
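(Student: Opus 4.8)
The plan is to show that $\mathcal{K}_j = \mathcal{R}[x]/\langle k_j(x)\rangle$ is itself a finite commutative chain ring with maximal ideal $\langle f_j(x)\rangle$ and nilpotency index $3p^s$, after which the statement follows immediately from Proposition \ref{pr1}. First I would establish that $\mathcal{K}_j$ is local. Every element can be written as $a(x) = a_0(x) + u a_1(x) + u^2 a_2(x)$ with each $a_\ell(x) \in \mathcal{P}_{d_j p^s}(\mathbb{F}_{p^m})$, and further each $a_\ell(x)$ expands $f_j$-adically as $\sum_{i=0}^{p^s-1} A_i^{(a_\ell)}(x) f_j(x)^i$ with $A_i^{(a_\ell)}(x) \in \mathcal{P}_{d_j}(\mathbb{F}_{p^m})$. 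I claim $a(x)$ is a unit in $\mathcal{K}_j$ iff $A_0^{(a_0)}(x) \neq 0$, i.e. iff the ``constant'' $f_j$-adic digit of the $\mathbb{F}_{p^m}[x]$-part is non-zero. Indeed, if $A_0^{(a_0)}(x) \neq 0$ then, since $\deg A_0^{(a_0)}(x) < d_j = \deg f_j(x)$, Lemma \ref{nilred}(a) makes $A_0^{(a_0)}(x)$ a unit; writing $a(x) = A_0^{(a_0)}(x) + a(x) - A_0^{(a_0)}(x)$ and noting the second summand lies in $\langle f_j(x)\rangle + \langle u\rangle = \langle f_j(x)\rangle$ (using Lemma \ref{nilred}(b), which gives $\langle u\rangle = \langle f_j(x)^{p^s}\rangle \subseteq \langle f_j(x)\rangle$ when $\beta \neq 0$), and since $f_j(x)$ is nilpotent in $\mathcal{K}_j$ by Lemma \ref{nilred}(b), the element $a(x)$ is a unit plus a nilpotent, hence a unit. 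Conversely, if $A_0^{(a_0)}(x) = 0$ then $a(x) \in \langle f_j(x)\rangle$, and since $f_j(x)$ is nilpotent, $a(x)$ is a non-unit. Thus the set of non-units is exactly $\langle f_j(x)\rangle$, so $\mathcal{K}_j$ is local with maximal ideal $\langle f_j(x)\rangle$, which is principal.

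By Proposition \ref{pr1}, $\mathcal{K}_j$ is then a chain ring whose ideals are precisely $\langle f_j(x)^\ell\rangle$ for $0 \leq \ell \leq e$, where $e$ is the nilpotency index of $f_j(x)$ in $\mathcal{K}_j$. Lemma \ref{nilred}(b) already records that $e = 3p^s$ when $\beta \neq 0$: one has $f_j(x)^{p^s} = u \cdot (\text{unit})$ in $\mathcal{K}_j$ since $k_j(x) = f_j(x)^{p^s} + u g_j(x) + u^2 h_j(x) \equiv 0$ forces $f_j(x)^{p^s} = -u(g_j(x) + u h_j(x))$, and $g_j(x)$ is a unit in $\mathcal{K}_j$ because $\gcd(f_j(x), g_j(x)) = 1$ (Lemma \ref{fac}); hence $\langle f_j(x)^{p^s}\rangle = \langle u\rangle$, $\langle f_j(x)^{2p^s}\rangle = \langle u^2\rangle$, and $\langle f_j(x)^{3p^s}\rangle = \langle u^3\rangle = \{0\}$, while $f_j(x)^{3p^s - 1}$ is a non-zero scalar multiple of $u^2$ and therefore non-zero. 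So the ideals of $\mathcal{K}_j$ are exactly $\langle f_j(x)^\ell\rangle$ for $0 \leq \ell \leq 3p^s$.

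For the cardinalities, Proposition \ref{pr1}(c) gives $|\langle f_j(x)^\ell\rangle| = |\mathcal{K}_j/\langle f_j(x)\rangle|^{3p^s - \ell}$. The residue field $\mathcal{K}_j/\langle f_j(x)\rangle$ is $\mathbb{F}_{p^m}[x]/\langle f_j(x)\rangle \cong \mathbb{F}_{p^{m d_j}}$ (the $u$ and $u^2$ parts die modulo $\langle f_j(x)\rangle \supseteq \langle u\rangle$), so $|\mathcal{K}_j/\langle f_j(x)\rangle| = p^{m d_j}$ and hence $|\langle f_j(x)^\ell\rangle| = p^{m d_j(3p^s - \ell)}$. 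Finally, for the annihilator: in a chain ring with nilpotency index $e$, $\mathrm{ann}(\langle w^\ell\rangle) = \langle w^{e-\ell}\rangle$, a standard fact proved by noting $w^{e-\ell} \cdot w^\ell = 0$ so $\langle w^{e-\ell}\rangle \subseteq \mathrm{ann}(\langle w^\ell\rangle)$, while $|\mathrm{ann}(\langle w^\ell\rangle)| \cdot |\langle w^\ell\rangle| = |\mathcal{K}_j|$ (the annihilator of an ideal in a finite Gorenstein/Frobenius local ring has complementary size — or one argues directly via the chain structure that the only ideal of the right size is $\langle w^{e-\ell}\rangle$), forcing equality. Applying this with $w = f_j(x)$ and $e = 3p^s$ yields $\mathrm{ann}(\langle f_j(x)^\ell\rangle) = \langle f_j(x)^{3p^s - \ell}\rangle$.

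The main obstacle I anticipate is the unit characterization in the first paragraph — specifically, cleanly justifying that an element with non-zero leading $f_j$-adic digit $A_0^{(a_0)}(x)$ is a unit. The cleanest route is the ``unit $+$ nilpotent'' argument above, which hinges on $\langle u\rangle \subseteq \langle f_j(x)\rangle$ (true only because $\beta \neq 0$, via Lemma \ref{nilred}(b)) and on $f_j(x)$ being nilpotent in $\mathcal{K}_j$; once these two facts are in hand the rest is routine. Everything else — the ideal classification, the sizes, and the annihilators — is then a direct application of Proposition \ref{pr1} together with the elementary chain-ring facts, so no further serious difficulty should arise.
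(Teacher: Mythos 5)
Your proposal is correct and follows essentially the same route as the paper: characterize the units of $\mathcal{K}_j$ via the lowest $f_j$-adic digit (the paper's $r(x)$ from the division algorithm is your $A_0^{(a_0)}(x)$), conclude that the non-units form the principal ideal $\langle f_j(x)\rangle$, and then invoke Proposition \ref{pr1} together with Lemma \ref{nilred}(b) to get the chain structure, the list of ideals, and the cardinalities. You are slightly more explicit than the paper on two points it leaves implicit — that $f_j(x)^{3p^s-1}\neq 0$ and the size-count argument for $\text{ann}(\langle f_j(x)^{\ell}\rangle)=\langle f_j(x)^{3p^s-\ell}\rangle$ — but the substance is the same.
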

\begin{proof}  To prove this, we first observe that an element $a(x)\in \mathcal{K}_{j}$ can be  uniquely expressed as $a(x)=a_0(x)+ua_1(x)+u^2a_2(x)$, where $a_0(x),a_1(x),a_2(x)\in \mathcal{P}_{d_jp^s}(\mathbb{F}_{p^m}).$ By division algorithm in $\mathbb{F}_{p^m}[x],$ there exist unique polynomials  $q(x),r(x) \in \mathbb{F}_{p^m}[x]$ such that $a_0(x)={f_j(x)} q(x)+r(x)$, where either $r(x)=0$ or $\text {deg }r(x)< d_j.$ This implies that $a(x)={f_j(x)}q(x)+r(x)+ua_1(x)+u^2a_2(x).$ Now in view of Lemma \ref{nilred}(b), we see that  $a(x)$ is a unit in $\mathcal{K}_{j}$ if and only if $r(x)$ is a unit in $\mathcal{K}_{j}.$ Further, by Lemma \ref{nilred}(a), we see that $r(x) \in \mathbb{F}_{p^m}[x]$ is a unit in $\mathcal{K}_{j}$ if and only if $r(x) \neq 0.$ This shows that $a(x)$ is a non-unit in $\mathcal{K}_{j}$ if and only if $r(x)=0$ if and only if $a(x) \in \langle f_j(x)\rangle.$ That is, all the non-units of $\mathcal{K}_{j}$ are given by $\langle f_j(x)\rangle.$ Now using Proposition \ref{pr1} and Lemma \ref{nilred}(b), we see that $\mathcal{K}_{j}$ is a chain ring and all its ideals are given by $\langle f_j(x) ^{\ell}\rangle$ with  $0 \leq \ell \leq 3p^s.$   Furthermore, we observe that the residue field of $\mathcal{K}_{j}$ is given by $\overline{\mathcal{K}_j}=\mathcal{K}_j/\langle f_j(x)\rangle,$ and that  $|\overline{\mathcal{K}_j}|=p^{md_j}.$ Now using Proposition \ref{pr1} and Lemma \ref{nilred}(b) again, we obtain $| \langle f_j(x) ^{\ell}\rangle|=p^{md_j(3p^s-\ell)}$ for $0 \leq \ell \leq 3p^s.$  Further, it is easy to observe that $\text{ann}(\langle f_j(x)^{\ell} \rangle)=\langle f_j(x)^{3p^s-\ell}\rangle,$ which completes the proof of the theorem.\vspace{-1mm}\end{proof}
As a consequence of the above theorem, we deduce the following:
\vspace{-1mm}\begin{cor}\label{cor0}  Let $n \geq 1$ be an integer and $\alpha_0 \in \mathbb{F}_{p^m}$ be such that the binomial $x^n-\alpha_0$ is irreducible over $\mathbb{F}_{p^m}.$ Let $\alpha=\alpha_0^{p^s},$ and $\beta (\neq 0), \gamma \in \mathbb{F}_{p^m}.$   Then there exists an isodual $(\alpha+u\beta+u^2\gamma)$-constacyclic code of length $np^s$ over $\mathcal{R}$ if and only if $p=2.$ Moreover, when $p=2,$ the ideal $\langle (x^n-\alpha_0)^{ 3 \cdot 2^{s-1}}\rangle$ is the only isodual $(\alpha+u\beta+u^2 \gamma)$-constacyclic code of length $n2^s$ over $\mathcal{R}.$\vspace{-2mm}\end{cor}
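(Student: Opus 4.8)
The plan is to set up the isoduality problem in the decomposition of Proposition \ref{p1} and reduce it, via Theorem \ref{t0}, to a counting condition on the indices of the component ideals. Since $x^n-\alpha_0$ is irreducible over $\mathbb{F}_{p^m}$, the irreducible factorization in Lemma \ref{fac} has $r=1$; write $f(x)=x^n-\alpha_0$ and $d=\deg f=n$, so that $\mathcal{K}=\mathcal{R}[x]/\langle k(x)\rangle$ with $k(x)=f(x)^{p^s}+ug(x)+u^2h(x)$ and the ring $\mathcal{R}_\lambda$ coincides with $\mathcal{K}$. By Theorem \ref{t0}, every $(\alpha+u\beta+u^2\gamma)$-constacyclic code of length $np^s$ over $\mathcal{R}$ is of the form $\mathcal{C}=\langle f(x)^\ell\rangle$ with $0\le\ell\le 3p^s$, and $|\mathcal{C}|=p^{md(3p^s-\ell)}$. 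A first necessary condition for isoduality is $|\mathcal{C}|=|\mathcal{C}^\perp|$; since $|\mathcal{C}||\mathcal{C}^\perp|=|\mathcal{R}^{np^s}|=p^{3mnp^s}$, this forces $|\mathcal{C}|=p^{3mnp^s/2}$, hence $md(3p^s-\ell)=\tfrac32 mnp^s$, i.e.\ $\ell=\tfrac32 p^s$. For $\ell$ to be an integer we need $2\mid 3p^s$, i.e.\ $p=2$; this already proves the ``only if'' direction and pins down $\ell=3\cdot2^{s-1}$ in the remaining case.

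For the converse, suppose $p=2$ and take $\mathcal{C}=\langle f(x)^{3\cdot2^{s-1}}\rangle$. I must exhibit an $\mathcal{R}$-linear equivalence between $\mathcal{C}$ and $\mathcal{C}^\perp$. By Lemma \ref{lemdual} (parts (b) and (c)) together with Theorem \ref{t0}, $\text{ann}(\mathcal{C})=\langle f(x)^{3p^s-\ell}\rangle=\langle f(x)^{3\cdot2^{s-1}}\rangle$ as an ideal of $\mathcal{K}$, so $\mathcal{C}^\perp=\text{ann}(\mathcal{C})^*=\langle f^*(x)^{3\cdot2^{s-1}}\rangle$, sitting inside $\widehat{\mathcal{K}}=\mathcal{R}[x]/\langle k^*(x)\rangle$ — the ambient ring of $\lambda^{-1}$-constacyclic codes. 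Thus both $\mathcal{C}$ and $\mathcal{C}^\perp$ are the $3\cdot2^{s-1}$-th power of the maximal ideal of a chain ring with the same residue field $\mathbb{F}_{p^m}[x]/\langle f(x)\rangle$ and the same parameters, and I need a monomial-type $\mathcal{R}$-linear map carrying one to the other. The natural candidate is the map induced by $x\mapsto \alpha_0^{-1}x$ (or, more symmetrically, by the substitution that relates the ambient rings $\mathcal{R}[x]/\langle x^{np^s}-\lambda\rangle$ and $\mathcal{R}[x]/\langle x^{np^s}-\lambda^{-1}\rangle$): since $x^{np^s}-\lambda$ and $x^{np^s}-\lambda^{-1}$ differ only by the replacement $\alpha\mapsto\alpha^{-1}$, $\beta\mapsto -\alpha^{-2}\beta$, $\gamma\mapsto(\text{stuff})$ in characteristic $2$, one checks that a substitution $x\mapsto cx$ with a suitable $c\in\mathbb{F}_{p^m}$ (arising from $\alpha_0$, which exists by Lemma \ref{pr5}(b)) induces an $\mathcal{R}$-algebra isomorphism between the two quotient rings that is a coordinate permutation up to units on $\mathcal{R}^{np^s}$, hence an $\mathcal{R}$-linear equivalence, and that it sends $\langle f(x)\rangle$ to $\langle f^*(x)\rangle$ (equivalently, the maximal ideal to the maximal ideal), thus the $3\cdot2^{s-1}$-th powers to each other.

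I would organize the write-up as: (i) reduce to $r=1$ and recall $\mathcal{C}=\langle f(x)^\ell\rangle$, $|\mathcal{C}|=p^{md(3p^s-\ell)}$ from Theorem \ref{t0}; (ii) use $|\mathcal{C}|\,|\mathcal{C}^\perp|=p^{3mnp^s}$ to force $\ell=\tfrac32 p^s$, hence $p=2$ and $\ell=3\cdot2^{s-1}$; (iii) for $p=2$, compute $\mathcal{C}^\perp=\langle f^*(x)^{3\cdot2^{s-1}}\rangle$ via Lemma \ref{lemdual}; (iv) produce the explicit monomial substitution $x\mapsto cx$ realizing the $\mathcal{R}$-linear equivalence $\mathcal{R}_\lambda\cong\mathcal{R}_{\lambda^{-1}}$ sending $\langle f(x)\rangle$ to $\langle f^*(x)\rangle$, and conclude. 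The main obstacle I anticipate is step (iv): verifying that the coordinate-rescaling map is genuinely well-defined as a ring isomorphism $\mathcal{R}_\lambda\to\mathcal{R}_{\lambda^{-1}}$ (this needs the precise relation between $\lambda^{-1}$ and the image of $\lambda$ under $x\mapsto cx$, using $p=2$ and $u^3=0$) and that it maps the unique maximal ideal $\langle f(x)\rangle$ onto $\langle f^*(x)\rangle$; the size bookkeeping in steps (ii)–(iii) is routine given Theorem \ref{t0} and Lemma \ref{lemdual}.
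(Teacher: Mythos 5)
Your steps (i)--(iii) are exactly the paper's argument: reduce to $r=1$ because $x^n-\alpha_0$ is irreducible, invoke Theorem \ref{t0} to write every $(\alpha+u\beta+u^2\gamma)$-constacyclic code as $\langle (x^n-\alpha_0)^{\ell}\rangle$ with $|\mathcal{C}|=p^{mn(3p^s-\ell)}$, and force $3p^s=2\ell$ from $|\mathcal{C}|=|\mathcal{C}^{\perp}|$, which yields $p=2$ and $\ell=3\cdot 2^{s-1}$. (The paper obtains $|\mathcal{C}^{\perp}|=p^{mn\ell}$ from the annihilator formula $\text{ann}(\mathcal{C})=\langle (x^n-\alpha_0)^{3p^s-\ell}\rangle$ rather than from $|\mathcal{C}|\,|\mathcal{C}^{\perp}|=|\mathcal{R}|^{np^s}$, but this is the same computation.)

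The problem is your step (iv). A substitution $x\mapsto cx$ induces a well-defined map $\mathcal{R}[x]/\langle x^{N}-\lambda\rangle\to\mathcal{R}[x]/\langle x^{N}-\lambda^{-1}\rangle$ only if $c^{N}=\lambda^{2}$; with $p=2$ and $N=n2^{s}$ one has $c^{N}=c_0^{N}\in\mathbb{F}_{2^m}$ for $s\geq 2$ (and for $s=1$, $c^{N}=c_0^{2n}+c_0^{2n-2}c_1^{2}u^{2}$, whose constant part forces $\alpha_0$ to be an $n$th power, contradicting irreducibility of $x^n-\alpha_0$ when $n>1$), whereas $\lambda^{2}=\alpha^{2}+\beta^{2}u^{2}$ with $\beta\neq 0$. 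So no such $c$ exists and a diagonal rescaling cannot realize the equivalence. The mechanism that actually works, and that the paper's terse ``we observe that'' is relying on (as its proof of Corollary \ref{cor1}(b) makes explicit), is the following: by Lemma \ref{pr2}, $\mathcal{C}^{\perp}=\text{ann}(\mathcal{C})^{*}$, and the $*$-operation on ideals is induced, up to multiplication by the unit $x$, by the coordinate-reversal permutation $(c_0,\dots,c_{N-1})\mapsto(c_{N-1},\dots,c_0)$ of $\mathcal{R}^{N}$, which carries $\lambda$-constacyclic codes onto $\lambda^{-1}$-constacyclic codes and is in particular an $\mathcal{R}$-linear equivalence. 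Since Theorem \ref{t0} gives $\text{ann}(\mathcal{C})=\langle (x^n-\alpha_0)^{3p^s-\ell}\rangle=\mathcal{C}$ precisely when $\ell=3\cdot 2^{s-1}$, one gets $\mathcal{C}^{\perp}=\mathcal{C}^{*}$, which is linearly equivalent to $\mathcal{C}$. Replace your step (iv) by this observation and the proof is complete.
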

\vspace{-2mm}\begin{proof}  On taking $f_j(x)=x^n-\alpha_0$ in Theorem \ref{t0},  we see that  all  $(\alpha+u\beta+u^2\gamma)$-constacyclic codes of length $np^s$ over $\mathcal{R}$ are given by $\left< (x^n-\alpha_0)^{\ell}\right>,$ where $0 \leq \ell \leq 3p^s.$ Furthermore, for $0 \leq \ell \leq 3p^s,$ the ideal $\left< (x^n-\alpha_0)^\ell \right>$ has $ p^{m n(3p^s-\ell)}$ elements and the annihilator  $\left< (x^n-\alpha_0)^{\ell}\right>$ is given by $\left< (x^n-\alpha_0)^{3p^s-\ell}\right>.$ Next we see that if a  code $\mathcal{C}=\left< (x^n-\alpha_0)^{\ell}\right>$ is isodual, then we must have $|\mathcal{C}|=|\mathcal{C}^{\perp}|.$ This gives $p^{mn(3p^s-\ell)}=p^{mn\ell}.$ This implies that  $3p^s=2\ell,$ which holds if and only if $p=2.$ So when $p$ is an odd prime, there does not exist any isodual $(\alpha+u\beta+u^2\gamma)$-constacyclic code of length $np^s$ over $\mathcal{R}.$ When $p=2,$ we get $\ell =3 \cdot 2^{s-1}.$   On the other hand, when $p=2,$ we observe that $\langle (x^n-\alpha_0)^{ 3 \cdot 2^{s-1}}\rangle$ is an isodual $(\alpha+u\beta+u^2 \gamma)$-constacyclic code of length $n2^s$ over $\mathcal{R},$ which completes the proof. 
 \end{proof}
\vspace{-1mm} \begin{rem}\label{R1} By Theorem 3.75 of \cite{lidl}, we see that the binomial $x^n-\alpha_0$ is irreducible over $\mathbb{F}_{p^m}$ if and only if the following two conditions are satisfied: (i) each prime divisor of $n$ divides the multiplicative order $e$ of $\alpha_0,$  but not $(p^m-1)/e $ and (ii) $p^m \equiv 1~(\text{mod }4)$ if $n \equiv 0~(\text{mod }4).$ 
 \end{rem}

 In the following theorem, we consider the case $\beta=\gamma=0,$ and we determine  all non-trivial ideals of the ring $\mathcal{K}_{j},$ their cardinalities and their annihilators. \begin{thm} \label{t2} Let $\beta=\gamma=0,$ and let  $\mathcal{I}$ be a non-trivial  ideal of the ring $\mathcal{K}_{j}$ with  $\text{Res}_{u}(\mathcal{I})=\langle f_j(x)^a \rangle,$ $\text{Tor}_{u}(\mathcal{I})=\langle f_j(x)^b\rangle $ and $\text{Tor}_{u^2}(\mathcal{I})=\langle f_j(x)^c\rangle$ for some integers $a,b,c$ satisfying $0 \leq c \leq b \leq a \leq p^s.$    Suppose that $B_i(x), C_k(x), Q_{\ell}(x), W_{e}(x)$ run over $\mathcal{P}_{d_j}(\mathbb{F}_{p^m})$ for each relevant $i, k , \ell$ and $e.$ Then the following hold.
 \begin{enumerate}\item[Type I:] When $a=b=p^s,$ we have  \begin{equation*}\mathcal{I}= \langle u^2 f_j(x)^c  \rangle ,\vspace{-2mm}\end{equation*} where $c  < p^s.$ Moreover, we have \begin{equation*}|\mathcal{I}|=p^{md_j(p^s-c)} \text{ and }\text{ann}(\mathcal{I})=\langle f_j(x)^{p^s-c}, u\rangle. \vspace{-2mm}\end{equation*}
 \item[Type II:] When $a=p^s$ and $b < p^s,$ we have we have  \begin{equation*}\mathcal{I}=\langle u f_j(x)^{b}+u^2f_j(x)^t G(x),u^2 f_j(x)^{c} \rangle ,\vspace{-2mm}\end{equation*}  where  $c+b-p^s \leq t < c $ if $G(x) \neq 0$  and  $G(x)$ is either 0 or  a unit in $\mathcal{K}_{j}$ of the form $\sum\limits_{i=0}^{c-t-1}B_{i}(x)f_j(x)^{i}.$ 
Moreover, we have \begin{equation*}|\mathcal{I}|=p^{md_j(2p^s-b-c)} \text{ and }\text{ann}(\mathcal{I})=\langle f_j(x)^{p^s-c}-u f_j(x)^{p^s-c+t-b}G(x), uf_j(x)^{p^s-b}, u^2 \rangle.\vspace{-2mm}\end{equation*}\item[Type III:] When $a < p^s,$  we have \begin{equation*}\mathcal{I}=\langle f_j(x)^a+u f_j(x)^{t_1} D_1(x)+u^2 f_j(x)^{t_2} D_2(x),u f_j(x)^{b}+u^2 f_j(x)^{\theta}V(x),u^2 f_j(x)^{c}\rangle ,\end{equation*} where   $a+b-p^s \leq t_1 < b $ if $D_1(x) \neq 0,$ $0 \leq t_2 < c$ if $D_2(x) \neq 0,$ $b+c-p^s \leq \theta< c$ if $V(x) \neq 0,$ $D_1(x)$ is either 0 or a unit  in $\mathcal{K}_{j}$ of the form $\sum\limits_{k=0}^{b-t_1-1}C_{k}(x)f_j(x)^{k},$ $D_2(x)$ is either 0 or a unit  in $\mathcal{K}_{j}$ of the form $\sum\limits_{\ell=0}^{c-t_2-1}Q_{\ell}(x)f_j(x)^{\ell}$ and  $V(x)$ is either 0 or a unit  in $\mathcal{K}_{j}$ of the form   $\sum\limits_{i=0}^{c-\theta-1}W_{i}(x)f_j(x)^i.$ Furthermore, we have $u^2\big(f_j(x)^{p^s-a+t_1-b+\theta}V(x)D_1(x)-f_j(x)^{p^s-a+t_2}D_2(x)\big) \in \langle u^2f_j(x)^{c}\rangle,$ i.e., there exists $A(x) \in \mathbb{F}_{p^m}[x]/\langle f_j(x)^{p^s}\rangle$ such that 
 $u^2\big( f_j(x)^{p^s-a+t_1-b+\theta}V(x)D_1(x)-f_j(x)^{p^s-a+t_2}D_2(x)\big)=u^2f_j(x)^{c}A(x).$ 
 
 Moreover, we have \begin{equation*}|\mathcal{I}|=p^{md_j(3p^s-a-b-c)}\vspace{-2mm}\end{equation*} and the annihilator of $\mathcal{I}$ is given by \vspace{2mm}\\$\text{ann}(\mathcal{I})=\langle f_j(x)^{p^s-c}-u f_j(x)^{p^s-c+\theta-b}V(x)+u^2 A(x), uf_j(x)^{p^s-b}-u^2 f_j(x)^{p^s-a+t_1-b}D_1(x), u^2 f_j(x)^{p^s-a}\rangle.$

%Let us write $u^2\big(V(x)D_1(x)-D_2(x)\big)=u^2f_j(x)^{\delta}A(x),$ where   $ 0\leq \delta  < p^s$  and  $A(x)$ is either $0$ or a  unit in $\mathbb{F}_{p^m}[x]/\langle f_j(x)^{p^s}\rangle.$ 
%If the integers $b, \theta,t_1,t_2$ satisfy $b-\theta \neq t_1-t_2,$ then we have $c \leq p^s-a+t_1-b +\theta$ when both  $D_1(x), V(x) $ are non-zero, while $c < p^s-a+t_2$ when  $D_2(x) \neq 0.$ On the other hand, if the integers $b,\theta,t_1, t_2$ satisfy $b-\theta =t_1-t_2$ and  $A(x) $ is non-zero, then we have $c \leq p^s-a+t_2+\delta.$ Furthermore, we have \begin{equation*}|\mathcal{I}|=p^{md_j(3p^s-a-b-c)}\end{equation*} and the annihilator of $\mathcal{I}$ is given by \vspace{2mm}\\$\text{ann}(\mathcal{I})=\langle f_j(x)^{p^s-c}-u f_j(x)^{p^s-c+\theta-b}V(x)+u^2 f_j(x)^{p^s-a+t_1-b+\theta-c} D_1(x)V(x) -u^2f_j(x)^{p^s-a+t_2-c}D_2(x),$\begin{flushright}$uf_j(x)^{p^s-b}-u^2 f_j(x)^{p^s-a+t_1-b}D_1(x), u^2 f_j(x)^{p^s-a}\rangle$ when $b -\theta \neq t_1-t_2,$\end{flushright}  while  $\text{ann}(\mathcal{I})=\langle f_j(x)^{p^s-c}-u f_j(x)^{p^s-c+\theta-b}V(x)+u^2f_j(x)^{p^s-a+t_2-c+\delta}A(x), $ \begin{flushright}$uf_j(x)^{p^s-b}-u^2 f_j(x)^{p^s-a+t_1-b}D_1(x), u^2 f_j(x)^{p^s-a}\rangle $ when $b -\theta = t_1-t_2.$\end{flushright}
\end{enumerate}\end{thm}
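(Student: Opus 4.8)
The plan is to work throughout in the ring $S=\mathbb{F}_{p^m}[x]/\langle f_j(x)^{p^s}\rangle$, which by Proposition \ref{pr1} is a finite chain ring with maximal ideal $\langle f_j(x)\rangle$ and nilpotency index $p^s$, and to regard $\mathcal{K}_{j}$ as $S\oplus uS\oplus u^2S$ with $u^3=0$ (here $\beta=\gamma=0$ forces $k_j(x)=f_j(x)^{p^s}$ by Lemma \ref{fac}). Writing $f=f_j(x)$ for brevity, I would first show that every ideal $\mathcal{I}$ of $\mathcal{K}_j$ is generated by three elements of the form $\xi_1=f^a+u(\ast)+u^2(\ast)$, $\xi_2=uf^b+u^2(\ast)$ and $\xi_3=u^2f^c$. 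Indeed, by Lemma \ref{card} the sets $\text{Res}_u(\mathcal{I})$, $\text{Tor}_u(\mathcal{I})$ and $\text{Tor}_{u^2}(\mathcal{I})$ are ideals of the chain ring $S$, hence equal $\langle f^a\rangle,\langle f^b\rangle,\langle f^c\rangle$ for some $0\le c\le b\le a\le p^s$ by Proposition \ref{pr1}; choosing elements of $\mathcal{I}$ that realise these generators, and scaling by units of $\mathcal{K}_j$ so that the leading coefficients are exactly $f^a,f^b,f^c$, yields $\xi_1,\xi_2,\xi_3$, and a three-stage division argument — subtract a multiple of $\xi_1$ from an arbitrary element of $\mathcal{I}$ to kill its $u^0$-part, then a multiple of $\xi_2$ to kill the $u^1$-part, then a multiple of $\xi_3$ to kill the $u^2$-part — gives $\mathcal{I}=\langle\xi_1,\xi_2,\xi_3\rangle$. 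The three cases of the theorem are precisely $a=b=p^s$ (Type I, where $f^a=f^b=0$, so $\mathcal{I}\subseteq u^2\mathcal{K}_j$ and $\mathcal{I}=\langle u^2f^c\rangle$), $a=p^s>b$ (Type II, where $\mathcal{I}\subseteq u\mathcal{K}_j$ and $\mathcal{I}=\langle\xi_2,\xi_3\rangle$), and $a<p^s$ (Type III).

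The next step is to put $\xi_1,\xi_2,\xi_3$ into the displayed canonical forms. One reduces the $u^2$-part of $\xi_2$ modulo $\xi_3$ and, in Type III, the $u^1$-part of $\xi_1$ modulo the $u^1$-part of $\xi_2$ and the $u^2$-part of $\xi_1$ modulo $\xi_3$. Reducing a valuation-$t$ term modulo $\langle f^c\rangle$ (resp. $\langle f^b\rangle$) in a given slot forces its cofactor to be either $0$ or a unit truncated to $\sum_{i=0}^{c-t-1}(\ast)f^i$ (resp. $\sum_{i=0}^{b-t-1}(\ast)f^i$), which produces the stated shapes of $G,D_1,D_2,V$. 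The lower bounds on the exponents come from pushing generators to the ``bottom'': $f^{p^s-b}\xi_2=u^2f^{p^s-b+\theta}V\in\mathcal{I}$ is a pure-$u^2$ element, so $f^{p^s-b+\theta}V\in\text{Tor}_{u^2}(\mathcal{I})=\langle f^c\rangle$, whence $\theta\ge b+c-p^s$; likewise the $u^1$-part $f^{p^s-a+t_1}D_1$ of $f^{p^s-a}\xi_1$ must lie in $\text{Tor}_u(\mathcal{I})=\langle f^b\rangle$, whence $t_1\ge a+b-p^s$; and the Type II bound $c+b-p^s\le t$ arises the same way from $f^{p^s-b}\xi_2$. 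Finally, subtracting $f^{p^s-a+t_1-b}D_1\,\xi_2$ from $f^{p^s-a}\xi_1$ leaves a pure-$u^2$ element of $\mathcal{I}$, which must lie in $\langle u^2f^c\rangle$; this is exactly the compatibility relation $u^2\big(f^{p^s-a+t_1-b+\theta}V(x)D_1(x)-f^{p^s-a+t_2}D_2(x)\big)=u^2f^cA(x)$ (after absorbing a sign into $A(x)$). A short converse check confirms that for any parameters satisfying these constraints the three displayed elements generate an ideal whose $\text{Res}_u,\text{Tor}_u,\text{Tor}_{u^2}$ are $\langle f^a\rangle,\langle f^b\rangle,\langle f^c\rangle$, so the list is exhaustive.

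The cardinalities follow at once from Lemma \ref{card} together with $|\langle f^i\rangle|=p^{md_j(p^s-i)}$: one has $|\mathcal{I}|=|\langle f^a\rangle||\langle f^b\rangle||\langle f^c\rangle|$, giving $p^{md_j(p^s-c)}$, $p^{md_j(2p^s-b-c)}$ and $p^{md_j(3p^s-a-b-c)}$ in Types I, II and III respectively. For the annihilators, in each type I would verify by direct multiplication that the three listed generators kill $\xi_1,\xi_2,\xi_3$; the internal relations from the previous step are used here — for instance, in Type III the $u^1$-coefficient of $\big(f^{p^s-c}-uf^{p^s-c+\theta-b}V+u^2A\big)\xi_1$ vanishes because $f^{t_1}D_1\equiv f^{a-b+\theta}V\pmod{f^c}$, and its $u^2$-coefficient vanishes by the compatibility identity. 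This shows $\langle\text{listed generators}\rangle\subseteq\text{ann}(\mathcal{I})$; computing the three invariants $\text{Res}_u,\text{Tor}_u,\text{Tor}_{u^2}$ of the left-hand ideal and applying Lemma \ref{card} once more gives $|\langle\text{listed generators}\rangle|=p^{md_j(a+b+c)}$ (with the convention $a=p^s$ and/or $b=p^s$ in Types I and II). Since $\mathcal{R}$, and hence $\mathcal{K}_j$, is a finite Frobenius ring — equivalently, since $|\mathcal{C}_j||\mathcal{C}_j^{\perp}|=|\widehat{\mathcal{K}_j}|$ by Lemma \ref{pr2} and Propositions \ref{p1} and \ref{p2} — we have $|\text{ann}(\mathcal{I})|=|\mathcal{K}_j|/|\mathcal{I}|=p^{md_j(a+b+c)}$, so the inclusion is an equality and $\text{ann}(\mathcal{I})$ is as claimed.

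I expect the main obstacle to be Type III: one must carry out the two reductions of $\xi_1$ without disturbing the normalisations of $\xi_2,\xi_3$ already achieved, track the valuation and degree bounds on $t_1,t_2,\theta$ and on $D_1,D_2,V$ simultaneously, and — the genuinely subtle point — prove that the single compatibility relation is both forced by, and sufficient for, $\langle\xi_1,\xi_2,\xi_3\rangle$ to be an ideal with the prescribed torsion ideals rather than a strictly larger one. The annihilator verification in Type III is likewise heavy bookkeeping, but it is tamed by the counting shortcut above, so that only the inclusion ``$\subseteq$'' has to be checked by hand.
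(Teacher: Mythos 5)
Your proposal is correct and follows essentially the same route as the paper's own proof: the same case split on $(a,b)$, the same successive reduction of generators to the canonical forms with the exponent bounds obtained by multiplying the generators by powers of $f_j(x)$, the same derivation of the compatibility relation, and the same annihilator argument (exhibit the candidate ideal, check it annihilates $\mathcal{I}$, and force equality by comparing $|\mathcal{K}_j|/|\mathcal{I}|$ with a lower bound on the candidate's size). No substantive differences to report.
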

\begin{proof} As $\mathcal{I}$ is a non-trivial ideal of $\mathcal{K}_{j},$ we note that neither $a=0$ nor $a=b=c =p^s$ hold.  Further, by Lemma \ref{card}, we have  $|\mathcal{I}|=p^{md_j(3p^s-a-b-c)}.$ Now to write down all such non-trivial ideals of $\mathcal{K}_{j}$ and to determine their annihilators, we shall distinguish the following three cases: {\bf (i)} $a=p=p^s,$  {\bf (ii)} $a=p^s$ and $b < p^s,$  and {\bf (iii)} $a < p^s.$
\begin{description}\vspace{-2mm}\item[(i)] When $a=b=p^s,$ we have $\mathcal{I} \subseteq \langle u^2\rangle .$ In this case, we have  $0 \leq c< p^s.$ Here we observe that $\mathcal{I}=\langle u^2 f_j(x)^c \rangle.$ Now to find $\text{ann}(\mathcal{I}),$ we consider the ideal $\mathcal{B}_{1}=\langle f_j(x)^{p^s-c}, u, u^2 \rangle,$ and we see that $\mathcal{B}_{1} \subseteq \text{ann}(\mathcal{I})$ and that $|\mathcal{B}_{1} | =p^{md_j(2p^s+c)}.$ As
\begin{equation*} p^{md_j(p^s-c)} =|\mathcal{I}| =\frac{|\mathcal{K}_j|}{|\text{ann}(\mathcal{I}) |} \leq \frac{p^{3md_jp^s}}{|\mathcal{B}_{1} |}=p^{md_j(p^s-c)},\end{equation*}
we obtain $\text{ann}(\mathcal{I})=\mathcal{B}_{1}=\langle f_j(x)^{p^s-c}, u, u^2 \rangle. $

\vspace{-2mm}\item[(ii)] When $a=p^s$ and $b < p^s,$ we have  $\mathcal{I} \subseteq \langle u\rangle $ and $\mathcal{I} \not\subseteq \langle u^2\rangle .$  Here we observe that \vspace{-2mm}\begin{equation*}\mathcal{I} =\langle u{f_j(x)} ^{b}+u^2r(x),   u^2{f_j(x)} ^{c}\rangle\vspace{-2mm}\end{equation*} for some  $ r(x) \in \mathcal{K}_{j}.$ Let us write $u^2 r(x)=u^2 \sum\limits_{i=0}^{p^s-1}\mathcal{G}_i(x){f_j(x)} ^i,$ where $\mathcal{G}_i(x) \in \mathcal{P}_{d_j}(\mathbb{F}_{p^m})$ for $0 \leq i \leq p^s-1.$   Note that for all $i \geq c,$ we have $u^2 f_j(x)^i= u^2f_j(x) ^{c} f_j(x) ^{i-c} \in \mathcal{I},$ which implies that  $\mathcal{I}=<u f_j(x) ^{b}+u^2 \sum\limits_{i=0}^{c-1}\mathcal{G}_i(x) f_j(x) ^i, u^2f_j(x) ^{c}>.$  If $u^2\sum\limits_{i=0}^{c-1}\mathcal{G}_i(x) f_j(x) ^i \neq 0$ in $\mathcal{K}_{j},$ then choose the smallest integer $t~(0 \leq t < c)$ satisfying $\mathcal{G}_{t}(x) \neq 0,$ which gives $u^2\sum\limits_{i=0}^{c-1}\mathcal{G}_i(x) f_j(x) ^i=u^2f_j(x)^t G(x),$ where $G(x)=\sum\limits_{i=t}^{c-1} \mathcal{G}_{i}(x)f_j(x)^{i-t}$ is a unit in $\mathcal{K}_{j}.$ On the other hand, when $u^2\sum\limits_{i=0}^{c-1}\mathcal{G}_i(x) f_j(x) ^i=0$ in $\mathcal{K}_{j},$ let us choose $G(x)=0.$ From this, it follows that \vspace{-2mm}\begin{equation*}\mathcal{I} =\langle uf_j(x) ^{b}+u^2f_j(x) ^t G(x),   u^2f_j(x) ^{c}\rangle,\vspace{-2mm}\end{equation*} where $G(x)$ is either 0 or a unit in $\mathcal{K}_{j}$ of the form $\sum\limits_{i=0}^{c-t-1} a_{i}(x)f_j(x)^{i}$ with $a_{i}(x) \in \mathcal{P}_{d_j}(\mathbb{F}_{p^m})$ for $0 \leq i \leq c-t-1.$ 

Further, as $f_j(x)^{p^s-b}\{ uf_j(x) ^{b}+u^2f_j(x) ^t G(x) \} =   u^2 f_j(x)^{p^s-b+t} G(x) \in \mathcal{I}, $ we have $ p^s-b+t \geq c$ when $G(x) \neq 0.$

Moreover, let  $\mathcal{B}_{2}=\langle f_j(x)^{p^s-c}-u f_j(x)^{p^s-c+t-b}G(x), uf_j(x)^{p^s-b}, u^2 \rangle.$ We observe that $\mathcal{B}_{2} \subseteq \text{ann}(\mathcal{I})$ and $|\mathcal{B}_{2} |   \geq p^{md_j(p^s+b+c)}.$ 
Since \vspace{-2mm} \begin{equation*} p^{md_j(2p^s-b-c)} =|\mathcal{I}| =\frac{|\mathcal{K}_j|}{|\text{ann}(\mathcal{I}) |} \leq \frac{p^{3md_jp^s}}{|\mathcal{B}_{2} |} \leq p^{md_j(2p^s-b-c)},\vspace{-2mm}\end{equation*}
we obtain $|\text{ann}(\mathcal{I})|=|\mathcal{B}_{2} | = p^{md_j(p^s+b+c)}$ and $\text{ann}(\mathcal{I})=\mathcal{B}_{2}=\langle f_j(x)^{p^s-c}-u f_j(x)^{p^s-c+t-b}G(x), uf_j(x)^{p^s-b}, u^2 \rangle.$

\vspace{-2mm}\item[(iii)] When $a < p^s,$ we have $\mathcal{I} \not \subseteq \langle u \rangle .$ In this case, we see that   $ a >0.$ Here we observe that  \vspace{-2mm}\begin{equation*}\mathcal{I}= \langle f_j(x)^a+u r_1(x)+u^2r_2(x), uf_j(x)^b+u^2 q(x), u^2f_j(x)^{c}  \rangle\vspace{-2mm}\end{equation*} for some $r_1(x), r_2(x), q(x) \in \mathcal{K}_j.$ Let us write $u r_1(x)=u\sum\limits_{\ell=0}^{p^{s}-1}A_{\ell}(x)f_j(x)^{\ell},$ where $A_{\ell}(x) \in \mathcal{P}_{d_j}(\mathbb{F}_{p^m})$ for $0 \leq \ell \leq p^s-1.$ Now for all $\ell \geq b,$  we observe that $u{f_j(x)} ^{\ell}={f_j(x)} ^{\ell-b}\left\{uf_j(x)^b+u^2 q(x)\right\} -u^2 {f_j(x)} ^{\ell-b} q(x).$ This implies that \vspace{-2mm}\begin{equation*}\mathcal{I}= \langle f_j(x)^a + u \sum\limits_{\ell=0}^{b-1}A_\ell(x) f_j(x) ^i+u^2 \{r_2(x)- q(x) \sum\limits_{\ell=b}^{p^s-1}A_{\ell}(x)f_j(x) ^{\ell-a}\}, uf_j(x)^b+u^2 q(x), u^2f_j(x)^{c}  \rangle.\vspace{-2mm}\end{equation*} Next we write $u^2 \{r_2(x)- q(x) \sum\limits_{\ell=b}^{p^s-1}A_{\ell}(x)f_j(x) ^{\ell-a}\} =u^2\sum\limits_{k=0}^{p^s-1}B_{k}(x)f_j(x)^{k},$ where $B_{k}(x) \in \mathcal{P}_{d_j}(\mathbb{F}_{p^m})$ for $0 \leq k \leq p^s-1.$ Further, for all $k \geq c,$ we see that $u^2{f_j(x)} ^{k}=u^2f_j(x) ^{c}f_j(x)^{k-c} \in \mathcal{I},$ which implies that  \vspace{-2mm} \begin{equation*}\mathcal{I}=\langle f_j(x)^a+u \sum\limits_{\ell=0}^{b-1}A_{\ell}(x)f_j(x) ^{\ell}+u^2 \sum\limits_{k=0}^{c-1}B_{k}(x)f_j(x)^{k}, uf_j(x)^b+u^2 q(x), u^2f_j(x)^{c}  \rangle.\vspace{-2mm}\end{equation*} Next we write $u^2q(x)= u^2\sum\limits_{i=0}^{p^s-1}W_{i}(x)f_j(x)^i.$ We further observe that $\mathcal{I}=\langle f_j(x)^a+u \sum\limits_{\ell=0}^{b-1}A_{\ell}(x)f_j(x) ^{\ell}+u^2 \sum\limits_{k=0}^{c-1}B_{k}(x)f_j(x)^{k}, uf_j(x)^b+u^2 \sum\limits_{i=0}^{c-1}W_{i}(x)f_j(x)^i, u^2f_j(x)^{c}  \rangle.$ If $u\sum\limits_{\ell=0}^{b-1}A_{\ell}(x)f_j(x)^{\ell} \neq 0,$ then there exists a smallest integer $t_1$ satisfying $0 \leq t_1 < b$ and $A_{t_1}(x) \neq 0,$ and we can write  $u\sum\limits_{\ell=0}^{b-1}A_{\ell}(x)f_j(x)^{\ell}=uf_j(x)^{t_1}D_1(x),$ where $D_1(x)=\sum\limits_{\ell=t_1}^{b-1}A_{\ell}(x){f_j(x)} ^{\ell-t_1}$ is a unit in $\mathcal{K}_{j}.$ Moreover, if $u^2\sum\limits_{k=0}^{c-1}B_{k}(x)f_j(x)^{k}  \neq 0,$ then there exists a smallest integer $t_2$ satisfying $0 \leq t_2 < c$ and $B_{t_2}(x) \neq 0,$ and we can write $u^2\sum\limits_{k=0}^{c-1}B_{k}(x)f_j(x)^{k} =u^2f_j(x)^{t_2}D_2(x),$ where $D_2(x)=\sum\limits_{k=t_2}^{c-1}B_{k}(x)f_j(x)^{k-t_2} $ is a unit in $\mathcal{K}_{j}.$ Further, if  $u^2\sum\limits_{i=0}^{c-1}W_{i}(x)f_j(x)^i\neq 0,$ then there exists a smallest integer $\theta$ satisfying $0 \leq \theta < c$ and $W_{\theta}(x) \neq 0,$ and we can write $u^2\sum\limits_{i=0}^{c-1}W_{i}(x)f_j(x)^i =u^2f_j(x)^{\theta}V(x),$ where $V(x)=\sum\limits_{i=\theta}^{c-1}W_{i}(x)f_j(x)^{i-\theta}$ is a unit in $\mathcal{K}_{j}.$ From this, it follows that \vspace{-2mm}\begin{equation*}\mathcal{I}=\langle f_j(x)^a+u f_j(x) ^{t_1}D_1(x)+u^2 u f_j(x) ^{t_2}D_2(x), uf_j(x)^b+u^2 f_j(x)^\theta V(x), u^2f_j(x)^{c}  \rangle, \vspace{-2mm}\end{equation*} where $D_1(x)$ is either 0 or a unit in $\mathcal{K}_{j}$ of the form $\sum\limits_{\ell=t_1}^{b-1}A_{\ell}(x)f_j(x)^{\ell-t_1},$ $D_2(x)$ is either 0 or a unit in $\mathcal{K}_{j}$ of the form $\sum\limits_{k=t_2}^{c-1}B_{k}(x)f_j(x)^{k-t_2} $ and $V(x)$ is either 0 or a unit in $\mathcal{K}_{j}$ of the form $\sum\limits_{i=\theta}^{c-1}W_{i}(x)f_j(x)^{i-\theta}$ with $A_{\ell}(x),B_{k}(x),W_{i}(x) \in \mathcal{K}_{j}$ for each $\ell,k$ and $i.$

 In order to determine $\text{ann}(\mathcal{I}),$ we first observe that $uf_j(x)^{p^s-a+t_1}D_1(x)+u^2f_j(x)^{p^s-a+t_2}D_2(x) \in \mathcal{I},$ which implies that  $p^s-a+t_1 \geq b$ when $D_1(x) \neq 0.$ Next we see that $ f_j(x)^{p^s-b} \{ uf_j(x)^{b}+u^2f_j(x)^{\theta}V(x)\} \in \mathcal{I},$ which gives $p^s-b+\theta \geq c$ when $V(x) \neq 0.$ Moreover, as $uf_j(x)^{a}+u^2f_j(x)^{t_1}D_1(x) \in \mathcal{I}$ and $f_j(x)^{a-b} \{ uf_j(x)^{b}+u^2f_j(x)^{\theta}V(x)\} \in \mathcal{I},$ we note that $u^2\{ f_j(x)^{t_1}D_1(x)-f_j(x)^{a-b+\theta}V(x)\} \in \mathcal{I},$ which implies that $u^2\{ f_j(x)^{t_1}D_1(x)-f_j(x)^{a-b+\theta}V(x)\} \in \langle u^2 f_j(x)^{c} \rangle.$ From this, we obtain $u^2 f_j(x)^{p^s-c} \{ f_j(x)^{t_1}D_1(x)-f_j(x)^{a-b+\theta}V(x)\}=0.$

Further, we see that  $uf_j(x)^{p^s-a+t_1}D_1(x)+u^2f_j(x)^{p^s-a+t_2}D_2(x) \in \mathcal{I}$ can be rewritten as \\$f_j(x)^{p^s-a+t_1-b}D_1(x) \{uf_j(x)^{b}+u^2f_j(x)^{\theta}V(x)\}-u^2f_j(x)^{p^s-a+t_1-b+\theta}D_1(x)V(x)+u^2f_j(x)^{p^s-a+t_2}D_2(x),$ which implies that 
\vspace{-2mm}\begin{equation*}u^2 \{f_j(x)^{p^s-a+t_1-b+\theta}D_1(x)V(x)-f_j(x)^{p^s-a+t_2}D_2(x)\} \in \mathcal{I}.\vspace{-2mm}\end{equation*} This further implies that  
\vspace{-2mm}\begin{equation*}u^2 \{f_j(x)^{p^s-a+t_1-b+\theta}D_1(x)V(x)-f_j(x)^{p^s-a+t_2}D_2(x)\} \in \langle u^2f_j(x)^c \rangle.\vspace{-2mm}\end{equation*}
Let us write $u^2 \{f_j(x)^{p^s-a+t_1-b+\theta}D_1(x)V(x)-f_j(x)^{p^s-a+t_2}D_2(x)\} =u^2f_j(x)^c A(x),$ where $A(x) \in \mathbb{F}_{p^m}[x]/\langle f_j(x)^{p^s}\rangle.$
Now consider the ideal \\$\mathcal{B}_{3}=\langle f_j(x)^{p^s-c}-u f_j(x)^{p^s-c+\theta-b}V(x)+u^2 A(x), uf_j(x)^{p^s-b}-u^2 f_j(x)^{p^s-a+t_1-b}D_1(x), u^2 f_j(x)^{p^s-a}\rangle.$ Here we note  that $|\mathcal{B}_{3} |   \geq p^{md_j(a+b+c)}$ and  $\mathcal{B}_{3} \subseteq \text{ann}(\mathcal{I}).$ Further, as \vspace{-2mm}\begin{equation*} p^{md_j(3p^s-a-b-c)} =|\mathcal{I}| =\frac{|\mathcal{K}_j|}{|\text{ann}(\mathcal{I}) |} \leq \frac{p^{3md_jp^s}}{|\mathcal{B}_{3} |} \leq p^{md_j(3p^s-a-b-c)},\vspace{-2mm}\end{equation*}
we get $|\text{ann}(\mathcal{I})|=|\mathcal{B}_{3} | = p^{md_j(a+b+c)}$ and $\text{ann}(\mathcal{I})=\mathcal{B}_{3}.$ 
 \vspace{-2mm}\end{description}
This completes the proof of the theorem.\vspace{-2mm}\end{proof}

In the following corollary, we obtain some isodual $\alpha$-constacyclic codes of length $np^s$ over $\mathcal{R}$ when the binomial $x^n-\alpha_0$ is irreducible over $\mathbb{F}_{p^m}.$
\vspace{-2mm}\begin{cor} \label{cor1}  Let $n \geq 1$ be an integer and $\alpha_0 \in \mathbb{F}_{p^m}\setminus \{0\}$ be such that the binomial $x^n-\alpha_0$ is  irreducible  over $\mathbb{F}_{p^m}.$ Let $\alpha =\alpha_0^{p^s}\in \mathbb{F}_{p^m}.$  Following the same notations as in Theorem \ref{t2}, we have the following:
\begin{enumerate}
\vspace{-2mm}\item[(a)] There does not exist any  isodual $\alpha$-constacyclic code of Type I over $\mathcal{R}.$
\vspace{-2mm}\item[(b)] There exists an isodual $\alpha$-constacyclic code of Type II over $\mathcal{R}$ if and only if $p=2.$ In fact, when $p=2,$ the code $\langle u (x^n-\alpha_0)^{2^{s-1}}, u^2\rangle$ is the only isodual $\alpha$-constacyclic code of Type II over $\mathcal{R}.$ 
\vspace{-2mm}\item[(c)] There exists an isodual $\alpha$-constacyclic code of Type III over $\mathcal{R}$ if and only if $p=2.$ Moreover, when $p=2,$ the codes $\mathcal{C}=\langle (x^n-\alpha_0)^a+u^2 (x^n-\alpha_0)^{t_2} D_2(x),u (x^n-\alpha_0)^{2^{s-1}},u^2 (x^n-\alpha_0)^{2^s-a}\rangle,$  $2^{s-1} \leq a <2^s,$ are isodual $\alpha$-constacyclic codes of  Type III over $\mathcal{R}.$ 
\vspace{-2mm}\end{enumerate}
\vspace{-2mm}\end{cor}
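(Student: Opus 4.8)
The plan is to take $f_j(x) = x^n - \alpha_0$ in Theorem \ref{t2}, so that $d_j = n$ and $\mathcal{K}_j = \mathcal{R}_\alpha$ itself (since $x^n - \alpha_0$ is irreducible, there is only one factor $f_1 = x^n - \alpha_0$, hence $r=1$). Under this identification, every $\alpha$-constacyclic code of length $np^s$ over $\mathcal{R}$ is one of the ideals listed in Theorem \ref{t2}, with cardinality $p^{mn(3p^s - a - b - c)}$. Since $\mathcal{C}^\perp = \mathrm{ann}(\mathcal{C})^*$ by Lemma \ref{pr2}, and $|\mathrm{ann}(\mathcal{C})^*| = |\mathrm{ann}(\mathcal{C})| = |\mathcal{K}_j|/|\mathcal{C}| = p^{mn(a+b+c)}$, a necessary condition for $\mathcal{C}$ to be isodual is $|\mathcal{C}| = |\mathcal{C}^\perp|$, i.e. $a + b + c = \frac{3p^s}{2}$, which forces $p = 2$. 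This disposes of all three parts in the ``only if'' direction at once, and for Type I it finishes part (a): a Type I code has $a = b = p^s$, so $a + b + c \geq 2p^s > \frac{3p^s}{2}$, a contradiction, so no Type I code can even satisfy the cardinality condition.

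For the ``if'' direction when $p = 2$, I would proceed type by type. For Type II (part (b)), the equality $a + b + c = 3 \cdot 2^{s-1}$ with $a = 2^s$ gives $b + c = 2^{s-1}$; together with $c \leq b$ this forces $c = 0$ and $b = 2^{s-1}$. So the only candidate (with $G(x)$ forced to be $0$, since $c + b - p^s \leq t < c = 0$ is empty) is $\mathcal{C} = \langle u(x^n-\alpha_0)^{2^{s-1}}, u^2 \rangle$. To show it is genuinely isodual (not merely of the right size), I would compute $\mathrm{ann}(\mathcal{C})$ from Theorem \ref{t2} Type II, namely $\langle f_j^{p^s - c} - u f_j^{p^s - c + t - b} G, u f_j^{p^s-b}, u^2\rangle = \langle f_j^{2^s}, u f_j^{2^{s-1}}, u^2 \rangle = \langle u f_j^{2^{s-1}}, u^2\rangle$ (recalling $f_j^{2^s} = f_j^{p^s}$ generates $\langle u^2\rangle$ here since $\beta = \gamma = 0$ means... wait, $f_j^{p^s} = 0$ in $\mathcal{K}_j$ when $\beta = \gamma = 0$). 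Then apply the star operation using Lemma \ref{lemdual}(c)--(d): since $f_j(x) = x^n - \alpha_0$ satisfies $f_j^*(x) = x^n f_j(x^{-1}) \cdot (\text{const}) = -\alpha_0^{-1}(x^n - \alpha_0^{-1}) $ up to a unit, and more relevantly $(x^n-\alpha_0)^{k}$ has $((x^n-\alpha_0)^k)^* $ a unit times $(x^n - \alpha_0^{-1})^k$; one checks $\mathcal{C}^\perp = \mathrm{ann}(\mathcal{C})^*$ is an ideal of $\widehat{\mathcal{K}_j} = \mathcal{R}[x]/\langle (x^n - \alpha_0^{-1})^{p^s}\rangle$ of the same Type II shape with the same parameters $(a,b,c) = (2^s, 2^{s-1}, 0)$, hence is $\mathcal{R}$-linearly equivalent to $\mathcal{C}$ via the substitution $x \mapsto \mu^{-1} x$ where $\alpha_0^{-1} = \mu^n$ if such $\mu$ exists, or more robustly via a monomial/scaling equivalence that sends $\langle x^{np^s} - \alpha\rangle$ to $\langle x^{np^s} - \alpha^{-1}\rangle$; the key point is that Type II codes with $c = 0$ and $G = 0$ form a single orbit.

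For Type III (part (c)), the size condition $a + b + c = 3 \cdot 2^{s-1}$ with $a < 2^s$ and $c \leq b \leq a$ is solved by the family the corollary proposes: take $b = 2^{s-1}$, $c = 2^s - a$, which requires $2^s - a \leq 2^{s-1} \leq a$, i.e. $2^{s-1} \leq a < 2^s$, and indeed $a + b + c = a + 2^{s-1} + 2^s - a = 3 \cdot 2^{s-1}$. With $D_1(x) = 0$ (forced, since $a + b - p^s \leq t_1 < b$ and here $a + b - p^s = a - 2^{s-1} \geq 0$, but we are choosing $D_1 = 0$) and $V(x) = 0$ (forced similarly since $b + c - p^s = 2^{s-1} - a \leq 0 \leq \theta < c$ may be nonempty, but we set $V = 0$), and $D_2(x)$ an arbitrary unit of the allowed form, the code is $\mathcal{C} = \langle (x^n-\alpha_0)^a + u^2(x^n-\alpha_0)^{t_2} D_2(x), u(x^n-\alpha_0)^{2^{s-1}}, u^2(x^n-\alpha_0)^{2^s - a}\rangle$. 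Using Theorem \ref{t2} Type III, $\mathrm{ann}(\mathcal{C}) = \langle f_j^{p^s - c} - u f_j^{\cdots} V + u^2 A, u f_j^{p^s - b} - u^2 f_j^{\cdots} D_1, u^2 f_j^{p^s - a}\rangle = \langle f_j^{a} + u^2 A(x), u f_j^{2^{s-1}}, u^2 f_j^{a}\rangle$ since $p^s - c = a$, $p^s - b = 2^{s-1}$, $p^s - a = 2^s - a = c$... wait, $p^s - a = 2^s - a$ and we need this versus $c = 2^s - a$, so they coincide — good, $u^2 f_j^{p^s-a} = u^2 f_j^{c}$. Then $\mathcal{C}^\perp = \mathrm{ann}(\mathcal{C})^*$ has the Type III shape over $\widehat{\mathcal{K}_j}$ with parameters $(a', b', c') = (a, 2^{s-1}, 2^s - a) = (a,b,c)$, $D_1' = 0$, $V' = 0$, and $D_2'(x)$ the ``reciprocal'' $A^*(x)$ or $-D_2^*$-type unit; since $(a,b,c)$, the positions of the nonzero entries, and the ``$D_1 = V = 0$'' structure all match, $\mathcal{C}^\perp$ is $\mathcal{R}$-linearly equivalent to a code of exactly $\mathcal{C}$'s form, and the $x \mapsto \mu^{-1}x$ substitution (lifting the field equivalence $x^n - \alpha_0 \leftrightarrow x^n - \alpha_0^{-1}$, which is available because $x^n - \alpha_0$ and $x^n - \alpha_0^{-1}$ are both irreducible of the same degree, hence their splitting fields and the constacyclic structures are equivalent) carries $\mathcal{C}^\perp$ onto such a code. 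The main obstacle will be the last step in each part: verifying that matching the invariants $(a,b,c)$ and the structural vanishing of $D_1, V$ (or $G$) actually yields an $\mathcal{R}$-linear \emph{equivalence} $\mathcal{C} \cong \mathcal{C}^\perp$ and not just equal cardinality — one must pin down the precise monomial equivalence between $\mathcal{R}_\alpha$ and $\mathcal{R}_{\alpha^{-1}}$ and track how it acts on the generators, and confirm that the free unit $D_2(x)$ in Type III can be chosen so that $\mathcal{C}^\perp$ maps exactly onto $\mathcal{C}$ (this is why part (c) asserts only that these codes \emph{are} isodual, rather than classifying all of them). Everything else is bookkeeping with the cardinality formula and Lemma \ref{lemdual}.
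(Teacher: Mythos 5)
Your necessary conditions (the cardinality computation forcing $p=2$, and the Type I contradiction in part (a)) match the paper's argument and are fine, but there are two genuine gaps. First, in part (b) you claim that $b+c=2^{s-1}$ together with $c\leq b$ ``forces $c=0$ and $b=2^{s-1}$.'' It does not: $c\leq b$ only gives $c\leq 2^{s-2}$, so for $s\geq 2$ a parameter pair such as $(b,c)=(2^{s-1}-1,\,1)$ survives your argument and is a legitimate Type II code of the right cardinality. Cardinality alone cannot establish the uniqueness asserted in (b). The paper gets $c=0$ from a finer invariant: an $\mathcal{R}$-linear equivalence $\mathcal{C}\cong\mathcal{C}^{\perp}$ must carry $\text{Res}_{u}(\mathcal{C})$ onto $\text{Res}_{u}(\mathcal{C}^{\perp})$, and since $\text{Res}_{u}(\mathcal{C})=\{0\}$ for Type II while $\text{Res}_{u}(\text{ann}(\mathcal{C}))=\langle (x^{n}-\alpha_{0})^{p^{s}-c}\rangle$, equivalence forces $p^{s}-c=p^{s}$, i.e.\ $c=0$ and hence $b=2^{s-1}$. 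You need this (or some substitute) to complete part (b).

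Second, your ``if'' direction is not closed, and the route you sketch for closing it is the wrong one. You propose a substitution $x\mapsto\mu^{-1}x$ with $\alpha_{0}^{-1}=\mu^{n}$, but such a $\mu$ need not exist: $\alpha_{0}^{-1}$ need not be an $n$th power in $\mathbb{F}_{p^{m}}$ (when $x^{n}-\alpha_{0}$ is irreducible and $n>1$, $\alpha_0$ itself is not an $n$th power), and you yourself flag the resulting equivalence $\mathcal{C}\cong\mathcal{C}^{\perp}$ as ``the main obstacle.'' The paper avoids this entirely: one verifies from Theorem \ref{t2} that for $p=2$ the candidate codes satisfy $\mathcal{C}=\text{ann}(\mathcal{C})$ \emph{exactly} as ideals --- for Type III this uses that $A(x)=-(x^{n}-\alpha_{0})^{t_{2}}D_{2}(x)=(x^{n}-\alpha_{0})^{t_{2}}D_{2}(x)$ in characteristic $2$, so the annihilator's generators coincide with those of $\mathcal{C}$ --- and then $\mathcal{C}^{\perp}=\text{ann}(\mathcal{C})^{*}=\mathcal{C}^{*}$ by Lemma \ref{pr2}, which is $\mathcal{R}$-linearly equivalent to $\mathcal{C}$ because the $*$-map is induced by coordinate reversal, a permutation of coordinates. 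Your computation of $\text{ann}(\mathcal{C})$ in Type III is on the right track; it just needs to be finished by observing the sign cancellation in characteristic $2$ and invoking the reversal equivalence rather than a (possibly nonexistent) scaling substitution.
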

\vspace{-2mm}\begin{proof} Let $\mathcal{C}$ be an $\alpha$-constacyclic code of length $np^s$ over $\mathcal{R}.$ For the code $\mathcal{C}$ to be isodual, we must have $|\mathcal{C}|=|\mathcal{C}^{\perp}|=|\text{ann}(\mathcal{C})|.$
\begin{enumerate}
\vspace{-2mm}\item[(a)] Let $\mathcal{C}$ be of Type I, i.e., $\mathcal{C}=\langle u^2 (x^n-\alpha_0)^c \rangle$ for some integer $c$ satisfying $0 \leq c  < p^s.$ By Theorem \ref{t2}, we see that $|\mathcal{C}|=p^{mn(p^s-c)}$ and $|\text{ann}(\mathcal{C})|=p^{mn(2p^s+c)}.$  Now if the code $\mathcal{C}$ is isodual, then we must have $|\mathcal{C}|=|\text{ann}(\mathcal{C})|.$ This implies that $p^s+2c=0,$ which is a contradiction. Hence  there does not exist any isodual $\alpha$-constacyclic code of Type I over $\mathcal{R}.$
\vspace{-2mm}\item[(b)] If the code $\mathcal{C}$ is of Type II, then $\mathcal{C} =\langle u (x^n-\alpha_0)^{b}+u^2(x^n-\alpha_0) ^t G(x),u^2 (x^n-\alpha_0)^{c} \rangle ,$ where $0 \leq c \leq b < p^s$ and $0 \leq t < c $ if $G(x) \neq 0.$ By Theorem \ref{t2}, we have $|\mathcal{C}|=p^{mn(2p^s-b-c)} ,$ $\text{ann}(\mathcal{C})=\langle (x^n-\alpha_0)^{p^s-c}-u (x^n-\alpha_0)^{p^s-c+t-b}G(x), u(x^n-\alpha_0)^{p^s-b}, u^2 \rangle$ and $|\text{ann}(\mathcal{C})|=p^{mn(p^s+b+c)}.$ Now if the code $\mathcal{C}$ is isodual, then we must have $|\mathcal{C}|=|\text{ann}(\mathcal{C})|,$ which gives $p=2$ and $c = 2^{s-1}-b.$ Further, if the code $\mathcal{C}$ is $\mathcal{R}$-linearly equivalent to $\text{ann}(\mathcal{C}),$ then $\text{Res}_{u}(\mathcal{C})=\{0\}$ must be $\mathbb{F}_{2^m}$-linearly equivalent to $\text{Res}_{u}(\text{ann}(\mathcal{C}))=\langle (x^n-\alpha_0)^{2^s-c}\rangle,$ which  implies that $c=0.$ This gives $b=2^{s-1}-c=2^{s-1}.$ 

On the other hand, when $p=2,$ $c=0$ and $b = 2^{s-1},$ by Theorem \ref{t2} again, we see that $\mathcal{C}=\text{ann}(\mathcal{C})$ holds, which implies that the codes $\mathcal{C}(\subseteq \mathcal{R}_{\alpha})$ and $\mathcal{C}^{\perp}(\subseteq \widehat{\mathcal{R}_{\alpha}})$ are $\mathcal{R}$-linearly equivalent. 

\vspace{-2mm}\item[(c)] If the code $\mathcal{C}$ is of Type III, then $\mathcal{C}=\langle (x^n-\alpha_0)^a+u (x^n-\alpha_0)^{t_1} D_1(x)+u^2 (x^n-\alpha_0)^{t_2} D_2(x),u (x^n-\alpha_0)^{b}+u^2 (x^n-\alpha_0)^{\theta}V(x),u^2 (x^n-\alpha_0)^{c}\rangle ,$ where $0  \leq c \leq b \leq a < p^s,$  $0 \leq t_1 < b $ if $D_1(x) \neq 0,$ $0 \leq t_2 < c$ if $D_2(x) \neq 0$ and $0 \leq \theta< c$ if $V(x) \neq 0.$

 Here by Theorem \ref{t2}, we have $|\mathcal{C}|=p^{mn(3p^s-a-b-c)}$ and $|\text{ann}(\mathcal{C})|=p^{mn(a+b+c)}.$ From this, we see that if  the code $\mathcal{C}$ is isodual, then we must have $3p^s=2(a+b+c),$ which implies that $p=2.$
 
 On the other hand, when $p=2,$ we see, by Theorem \ref{t2} again, that for $2^{s-1} \leq a < 2^s,$ the code $\mathcal{C}=\langle (x^n-\alpha_0)^a+u^2 (x^n-\alpha_0)^{t_2} D_2(x),u (x^n-\alpha_0)^{2^{s-1}},u^2 (x^n-\alpha_0)^{2^s-a}\rangle$ satisfies $\mathcal{C}=\text{ann}(\mathcal{C}),$ from which part (c) follows. 
\end{enumerate}
\vspace{-6mm}\end{proof}

In the following theorem, we consider the case $\beta=0$ and $\gamma \neq 0,$ and we determine all non-trivial ideals of the ring $\mathcal{K}_{j},$ their orthogonal complements and their cardinalities.
\vspace{-1mm}\begin{thm}\label{t3} Let $\beta=0$ and $\gamma$ be a non-zero element of $\mathbb{F}_{p^m}.$ Let  $\mathcal{I}$ be a non-trivial  ideal of the ring $\mathcal{K}_{j}$ with  $\text{Res}_{u}(\mathcal{I})=\langle f_j(x)^a \rangle,$ $\text{Tor}_{u}(\mathcal{I})=\langle f_j(x)^b\rangle $ and $\text{Tor}_{u^2}(\mathcal{I})=\langle f_j(x)^c\rangle$ for some integers $a,b,c$ satisfying $0 \leq c \leq b \leq a \leq p^s.$  Suppose that $B_i(x), C_k(x), Q_{\ell}(x), W_{e}(x)$ run over $\mathcal{P}_{d_j}(\mathbb{F}_{p^m})$ for each relevant $i, k , \ell$ and $e.$ Then the following hold.
 \begin{enumerate}\item[Type I:] When $a=b=p^s,$  we have  \begin{equation*}\mathcal{I}= \langle u^2 f_j(x)^c  \rangle ,\end{equation*} where $0 \leq c  < p^s.$ Moreover, we have \begin{equation*}|\mathcal{I}|=p^{md_j(p^s-c)} \text{ and }\text{ann}(\mathcal{I})=\langle f_j(x)^{p^s-c}, u\rangle. \end{equation*}
 \item[Type II:] When $a=p^s$ and $b < p^s,$  we have \begin{equation*}\mathcal{I}=\langle u f_j(x)^{b}+u^2f_j(x)^t G(x),u^2 f_j(x)^{c} \rangle ,\end{equation*}  where  $c+b-p^s \leq t < c $ if $G(x) \neq 0$  and  $G(x)$ is either 0 or  a unit in $\mathcal{K}_{j}$ of the form $\sum\limits_{i=0}^{c-t-1}B_{i}(x)f_j(x)^{i}.$ 
Moreover, we have \begin{equation*}|\mathcal{I}|=p^{md_j(2p^s-b-c)} \text{ and }\text{ann}(\mathcal{I})=\langle f_j(x)^{p^s-c}-u f_j(x)^{p^s-c+t-b}G(x), uf_j(x)^{p^s-b}, u^2 \rangle.\end{equation*}
\item[Type III:] When $a < p^s,$  we have \begin{equation*}\mathcal{I}=\langle f_j(x)^a+u f_j(x)^{t_1} D_1(x)+u^2 f_j(x)^{t_2} D_2(x),u f_j(x)^{b}+u^2 f_j(x)^{\theta}V(x),u^2 f_j(x)^{c}\rangle ,\end{equation*} where   $a+b-p^s \leq t_1 < b $ if $D_1(x) \neq 0,$ $0 \leq t_2 < c$ if $D_2(x) \neq 0,$ $b+c-p^s \leq \theta< c$ if $V(x) \neq 0,$ $D_1(x)$ is either 0 or a unit  in $\mathcal{K}_{j}$ of the form $\sum\limits_{k=0}^{b-t_1-1}C_{k}(x)f_j(x)^{k},$ $D_2(x)$ is either 0 or a unit  in $\mathcal{K}_{j}$ of the form $\sum\limits_{\ell=0}^{c-t_2-1}Q_{\ell}(x)f_j(x)^{\ell}$ and  $V(x)$ is either 0 or a unit  in $\mathcal{K}_{j}$ of the form   $\sum\limits_{i=0}^{c-\theta-1}W_{i}(x)f_j(x)^i.$ 
Furthermore, we have $u^2\big(h_j(x)+ f_j(x)^{p^s-a+t_1-b+\theta}V(x)D_1(x)-f_j(x)^{p^s-a+t_2}D_2(x)\big) \in \langle u^2f_j(x)^{c}\rangle,$ i.e., there exists $B(x) \in \mathbb{F}_{p^m}[x]/\langle f_j(x)^{p^s}\rangle$ such that 
 $u^2\big(h_j(x)+ f_j(x)^{p^s-a+t_1-b+\theta}V(x)D_1(x)-f_j(x)^{p^s-a+t_2}D_2(x)\big)=u^2f_j(x)^{c}B(x).$ 
 
 Moreover, we have  \begin{equation*}|\mathcal{I}|=p^{md_j(3p^s-a-b-c)}\end{equation*} and the annihilator of $\mathcal{I}$ is given by \vspace{2mm}\\$\text{ann}(\mathcal{I})=\langle f_j(x)^{p^s-c}-u f_j(x)^{p^s-c+\theta-b}V(x)+u^2 B(x), uf_j(x)^{p^s-b}-u^2 f_j(x)^{p^s-a+t_1-b}D_1(x), u^2 f_j(x)^{p^s-a}\rangle.$ 
\end{enumerate}\end{thm}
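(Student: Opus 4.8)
The plan is to mirror the proof of Theorem \ref{t2} step for step, the only structural difference being that in the present ring $\mathcal{K}_{j}=\mathcal{R}[x]/\langle k_j(x)\rangle$, with $k_j(x)=f_j(x)^{p^s}+u^2h_j(x)$, we have $f_j(x)^{p^s}=-u^2h_j(x)$ (rather than $f_j(x)^{p^s}=0$), where $\gcd(f_j(x),h_j(x))=1$ by Lemma \ref{fac}, $\langle f_j(x)^{p^s}\rangle=\langle u^2\rangle$, and $f_j(x)$ is nilpotent of index $2p^s$ by Lemma \ref{nilred}. First I would note that, by Lemma \ref{card} and Proposition \ref{pr1}, $|\mathcal{I}|=|\text{Res}_u(\mathcal{I})|\,|\text{Tor}_u(\mathcal{I})|\,|\text{Tor}_{u^2}(\mathcal{I})|=p^{md_j(3p^s-a-b-c)}$, and that non-triviality of $\mathcal{I}$ rules out both $a=0$ and $a=b=c=p^s$. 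I would then split into the three cases $a=b=p^s$, $a=p^s>b$, and $a<p^s$. In each case the strategy for the annihilator is the same: after bringing $\mathcal{I}$ into the stated form, exhibit an explicit ideal $\mathcal{B}$ with $\mathcal{B}\subseteq\text{ann}(\mathcal{I})$ and $|\mathcal{B}|\ge p^{3md_jp^s}/|\mathcal{I}|$, and then invoke $|\mathcal{I}|=|\mathcal{K}_{j}|/|\text{ann}(\mathcal{I})|\le p^{3md_jp^s}/|\mathcal{B}|\le|\mathcal{I}|$ to conclude $\text{ann}(\mathcal{I})=\mathcal{B}$. The orthogonal complement $\mathcal{I}^{\perp}$ is then obtained from $\text{ann}(\mathcal{I})$ via Lemma \ref{lemdual}.

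For Types I and II ($a=p^s$) the ideal $\mathcal{I}$ lies inside $\langle u\rangle$, and on $\langle u\rangle$ the relation $f_j(x)^{p^s}=-u^2h_j(x)$ plays no role, since $u\cdot u^2h_j(x)=0$; thus $\langle u\rangle$ carries exactly the same $\mathcal{K}_{j}$-module structure as in the $\beta=\gamma=0$ case, and the reductions of parts (i)--(ii) of the proof of Theorem \ref{t2} apply verbatim, giving $\mathcal{I}=\langle u^2f_j(x)^c\rangle$ in Type I and $\mathcal{I}=\langle uf_j(x)^b+u^2f_j(x)^tG(x),u^2f_j(x)^c\rangle$ in Type II, with the same ranges and the same constraint on $G(x)$. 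The stated annihilators then follow from the cardinality sandwich above, using the easy facts that $u^2w=0$ in $\mathcal{K}_{j}$ iff $f_j(x)^{p^s}\mid w$ in $\mathbb{F}_{p^m}[x]$ and that $uf_j(x)^{p^s}=0$ in $\mathcal{K}_{j}$.

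The substantive case is Type III ($a<p^s$, hence $a>0$ since $\mathcal{I}\not\subseteq\langle u\rangle$). Using the division algorithm together with the lower-order generators $uf_j(x)^b+u^2q(x)$ and $u^2f_j(x)^c$ to absorb high-degree terms, I would reduce $\mathcal{I}$ to the generating set $\langle f_j(x)^a+uf_j(x)^{t_1}D_1(x)+u^2f_j(x)^{t_2}D_2(x),\,uf_j(x)^b+u^2f_j(x)^{\theta}V(x),\,u^2f_j(x)^c\rangle$, the constraints $a+b-p^s\le t_1<b$, $0\le t_2<c$, $b+c-p^s\le\theta<c$ arising from the requirements $uf_j(x)^{p^s-a+t_1}D_1(x)\in\mathcal{I}$ and $u^2f_j(x)^{p^s-b+\theta}V(x)\in\mathcal{I}$. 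The new ingredient, absent from Theorem \ref{t2}, comes from multiplying the first generator by $f_j(x)^{p^s-a}$: this produces $-u^2h_j(x)+uf_j(x)^{p^s-a+t_1}D_1(x)+u^2f_j(x)^{p^s-a+t_2}D_2(x)\in\mathcal{I}$, and after subtracting $f_j(x)^{p^s-a+t_1-b}D_1(x)(uf_j(x)^b+u^2f_j(x)^{\theta}V(x))$ and using $\mathcal{I}\cap\langle u^2\rangle=\langle u^2f_j(x)^c\rangle$ one gets $u^2(h_j(x)+f_j(x)^{p^s-a+t_1-b+\theta}D_1(x)V(x)-f_j(x)^{p^s-a+t_2}D_2(x))\in\langle u^2f_j(x)^c\rangle$, which defines $B(x)$. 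Finally I would check that $\mathcal{B}_3=\langle f_j(x)^{p^s-c}-uf_j(x)^{p^s-c+\theta-b}V(x)+u^2B(x),\,uf_j(x)^{p^s-b}-u^2f_j(x)^{p^s-a+t_1-b}D_1(x),\,u^2f_j(x)^{p^s-a}\rangle$ annihilates all three generators of $\mathcal{I}$ — it is precisely the $B(x)$-relation that lets the first generator of $\mathcal{B}_3$ kill the first generator of $\mathcal{I}$ — and that $|\mathcal{B}_3|\ge p^{md_j(a+b+c)}$, whence the sandwich forces $\text{ann}(\mathcal{I})=\mathcal{B}_3$. I expect the main obstacle to be the bookkeeping in this last case: carrying out the successive reductions so that the displayed triple really does generate $\mathcal{I}$, pinning down the exact ranges of $t_1,t_2,\theta$, and verifying that the $B(x)$-corrected generator of $\mathcal{B}_3$ lies in $\text{ann}(\mathcal{I})$ rather than merely in the annihilator of a smaller ideal.
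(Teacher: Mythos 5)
Your proposal is correct and takes essentially the same route as the paper, whose entire proof of this theorem is the single line ``Working as in Theorem \ref{t2} and by applying Lemmas \ref{nilred} and \ref{card}, the desired result follows.'' You in fact supply more detail than the paper does, correctly isolating the only point where the case $\beta=0,\gamma\neq 0$ differs from $\beta=\gamma=0$: the relation $f_j(x)^{p^s}=-u^2h_j(x)$ is invisible on $\langle u\rangle$ (Types I and II) and contributes exactly the extra $h_j(x)$ term in the congruence defining $B(x)$ in Type III.
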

\begin{proof}
Working as in Theorem \ref{t2} and by applying Lemmas \ref{nilred}(c) and \ref{card}, the desired result follows.
\end{proof}
In the following corollary, we list some isodual $(\alpha+\gamma u^2)$-constacyclic codes of length $np^s$ over $\mathcal{R}$ when $\beta=0,$ $\gamma \neq 0$ and the binomial $x^n-\alpha_0$ is irreducible over $\mathbb{F}_{p^m}.$
\vspace{-1mm}\begin{cor}\label{cor2}  Let $n \geq 1$ be an integer and $\alpha_0 \in \mathbb{F}_{p^m}\setminus \{0\}$ be such that the binomial $x^n-\alpha_0$ is  irreducible  over $\mathbb{F}_{p^m}.$ Let $\alpha =\alpha_0^{p^s}\in \mathbb{F}_{p^m},$ and let  $\gamma$ be a non-zero element of $\mathbb{F}_{p^m}.$  Following the same notations as in Theorem \ref{t3}, we have the following:
\begin{enumerate}
\vspace{-2mm}\item[(a)] There does not exist any  isodual $(\alpha+ \gamma u^2)$-constacyclic code of Type I over $\mathcal{R}.$
\vspace{-2mm}\item[(b)] There exists an isodual $(\alpha+ \gamma u^2)$-constacyclic code of Type II over $\mathcal{R}$ if and only if $p=2.$ Furthermore, when $p=2,$ the code $\langle u (x^n-\alpha_0)^{2^{s-1}}, u^2\rangle$ is the only isodual $(\alpha + \gamma u^2)$-constacyclic code of Type II over $\mathcal{R}.$ 
\vspace{-2mm}\item[(c)] There exists an isodual $(\alpha+\gamma u^2)$-constacyclic code of Type III over $\mathcal{R}$ if and only if $p=2.$ Furthermore, when $p=2,$ the codes $\mathcal{C}=\langle (x^n-\alpha_0)^a+u (x^n-\alpha_0)^{a-2^{s-1}} \gamma ^{2^{m-1}}+u^2 (x^n-\alpha_0)^{t_2} D_2(x),u (x^n-\alpha_0)^{2^{s-1}}+u^2 \gamma ^{2^{m-1}}, u^2 (x^n-\alpha_0)^{2^s-a}\rangle,$  $2^{s-1} \leq a <2^s,$ are isodual $(\alpha + \gamma u^2)$-constacyclic codes of  Type III over $\mathcal{R}.$ 
\end{enumerate}
\end{cor}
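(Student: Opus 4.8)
The plan is to read off all $(\alpha+\gamma u^2)$-constacyclic codes of length $np^s$ over $\mathcal{R}$, together with their sizes and annihilators, from Theorem \ref{t3} applied with $r=1$ and $f_1(x)=x^n-\alpha_0$; this is legitimate because $x^n-\alpha_0$ is irreducible over $\mathbb{F}_{p^m}$, so $d_1=n$ and $\mathcal{K}_1=\mathcal{R}_\alpha$. For any such code $\mathcal{C}$, isoduality forces in particular $|\mathcal{C}|=|\mathcal{C}^\perp|=|\text{ann}(\mathcal{C})|$, and since $|\text{ann}(\mathcal{C})|=|\mathcal{K}_1|/|\mathcal{C}|=p^{3mnp^s}/|\mathcal{C}|$, the size formulas of Theorem \ref{t3} immediately restrict the admissible parameters $a,b,c$. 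Conversely, to certify that a concrete code is isodual I will establish $\mathcal{C}=\text{ann}(\mathcal{C})$; then $\mathcal{C}^\perp=\text{ann}(\mathcal{C})^*=\mathcal{C}^*$, and since the reciprocal-polynomial map $c(x)\mapsto c^*(x)$ is induced by a coordinate permutation, $\mathcal{C}$ is $\mathcal{R}$-linearly equivalent to $\mathcal{C}^\perp$. This is the same template used for Corollary \ref{cor1}.

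For part (a), a Type I code is $\mathcal{C}=\langle u^2(x^n-\alpha_0)^c\rangle$ with $0\le c<p^s$, so $|\mathcal{C}|=p^{mn(p^s-c)}$ and $|\text{ann}(\mathcal{C})|=p^{mn(2p^s+c)}$; equating forces $p^s+2c=0$, which is impossible, so no such code is isodual. For part (b), a Type II code has $|\mathcal{C}|=p^{mn(2p^s-b-c)}$ and $|\text{ann}(\mathcal{C})|=p^{mn(p^s+b+c)}$, so isoduality forces $p^s=2(b+c)$, hence $p=2$ and $b+c=2^{s-1}$. Next, a Type II code has $\text{Res}_u(\mathcal{C})=\{0\}$, while from the displayed annihilator in Theorem \ref{t3} one reads $\text{Res}_u(\text{ann}(\mathcal{C}))=\langle(x^n-\alpha_0)^{2^s-c}\rangle$; since an $\mathcal{R}$-linear equivalence reduces modulo $u$ to an $\mathbb{F}_{2^m}$-monomial transformation and hence preserves the $\mathbb{F}_{2^m}$-dimension of these residue codes, we must have $c=0$, whence $b=2^{s-1}$ and $G(x)=0$. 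Finally, using $(x^n-\alpha_0)^{2^s}=\gamma u^2$ in $\mathcal{K}_1$ (from Lemma \ref{fac}) together with $u^2\in\mathcal{C}$, one checks $\mathcal{C}=\langle u(x^n-\alpha_0)^{2^{s-1}},u^2\rangle=\text{ann}(\mathcal{C})$, so this code is isodual and is the unique isodual Type II code.

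For part (c), a Type III code has $|\mathcal{C}|=p^{mn(3p^s-a-b-c)}$ and $|\text{ann}(\mathcal{C})|=p^{mn(a+b+c)}$, so isoduality forces $3p^s=2(a+b+c)$; as $3$ is odd, this already forces $p=2$. Conversely, for each $a$ with $2^{s-1}\le a<2^s$ I will take $b=2^{s-1}$, $c=2^s-a$, $t_1=a-2^{s-1}$, $\theta=0$, $D_1(x)=V(x)=\gamma^{2^{m-1}}$ (the square root of $\gamma$ in $\mathbb{F}_{2^m}$), with $t_2,D_2(x)$ ranging over the admissible values of Theorem \ref{t3}; one verifies $0\le c\le b\le a<2^s$ and $a+b+c=3\cdot 2^{s-1}$. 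For this choice the exponent $p^s-a+t_1-b+\theta$ equals $0$, so the extra term $h_j(x)=-\gamma$ occurring in Theorem \ref{t3} is cancelled by $f_j(x)^{0}V(x)D_1(x)=\gamma$, the compatibility relation reduces to $B(x)=-(x^n-\alpha_0)^{t_2}D_2(x)$, and on substituting these values into the displayed formula for $\text{ann}(\mathcal{C})$ and using that $\mathbb{F}_{2^m}$ has characteristic $2$, the three generators of $\text{ann}(\mathcal{C})$ coincide termwise with those of $\mathcal{C}$; thus $\mathcal{C}=\text{ann}(\mathcal{C})$ is isodual.

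The main obstacle is purely the bookkeeping of exponents: correctly tracking $p^s-a+t_1-b+\theta$, $p^s-a+t_2$, $p^s-c+\theta-b$ and $p^s-a+t_1-b$ in the compatibility relation and in the annihilator of a Type III ideal, and recognizing that the choice $D_1(x)=V(x)=\gamma^{2^{m-1}}$ is exactly what kills the new term $h_j(x)=-\gamma$ that distinguishes Theorem \ref{t3} from Theorem \ref{t2}. Once these identities are verified, the remainder is a direct transcription of the proof of Corollary \ref{cor1}.
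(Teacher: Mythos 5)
Your proposal is correct and follows exactly the route the paper intends: the paper's own proof of Corollary \ref{cor2} is a one-line reference to ``work as in Corollary \ref{cor1} using Theorem \ref{t3},'' and your argument is precisely that template, with the size/annihilator comparison giving necessity and the verification $\mathcal{C}=\text{ann}(\mathcal{C})$ giving sufficiency. Your explicit bookkeeping (the exponent $p^s-a+t_1-b+\theta=0$, the cancellation of $h_j(x)=-\gamma$ by $V(x)D_1(x)=\gamma^{2^m}=\gamma$, and $B(x)=-(x^n-\alpha_0)^{t_2}D_2(x)$) correctly supplies the details the paper omits.
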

\vspace{-3mm}\begin{proof}  Working in a similar manner as in Corollary \ref{cor1} and by applying Theorem \ref{t3}, the desired result follows.\end{proof}
\vspace{-5mm}\section{Hamming distances, RT distances and RT weight distributions}  \label{sec4}\vspace{-2mm}
Throughout this section, let  $n \geq 1$ be an integer and $\alpha_0 \in \mathbb{F}_{p^m}\setminus \{0\}$ be such that the binomial $x^n-\alpha_0$ is  irreducible  over $\mathbb{F}_{p^m}.$ Let  $\alpha =\alpha_0^{p^s}$ and $\beta, \gamma \in \mathbb{F}_{p^m}.$ When $\beta \neq 0,$ Sharma  \&\ Sidana \cite{sharma3} explicitly determined Hamming distances, RT distances and RT weight distributions of all repeated-root $(\alpha+\beta u +\gamma u^2)$-constacyclic codes over $\mathcal{R}.$ In this section, we shall consider the case $\beta=0,$ and we shall determine Hamming distances, RT distances and RT weight distributions of all $(\alpha+\gamma u^2)$-constacyclic codes of length $np^s$ over $\mathcal{R}.$  

 Now let $\mathcal{C}$ be an $(\alpha+\gamma u^2)$-constacyclic code of length $np^s$ over $\mathcal{R}.$ It is easy to see that $d_H(\mathcal{C})=d_{RT}(\mathcal{C})=0$ when $\mathcal{C}=0,$ while $d_H(\mathcal{C})=d_{RT}(\mathcal{C})=1$ when $\mathcal{C}=\langle 1\rangle.$   In the following theorem, we determine Hamming distances of all non-trivial $(\alpha+\gamma u^2)$-constacyclic codes of length $np^s$ over $\mathcal{R}.$ \vspace{-2mm}\begin{thm}\label{HDt2}  Let  $\mathcal{C}$ be a non-trivial $(\alpha+\gamma u^2)$-constacyclic code of length $np^s$ over $\mathcal{R}$ with $\text{Tor}_{u^2}(\mathcal{C})=\langle (x^n-\alpha_0)^c \rangle$ for some integer $c$ satisfying $0 \leq c < p^s$ (as determined  in Theorems \ref{t2} and \ref{t3}). Then the Hamming distance $d_H(\mathcal{C})$ of the code $\mathcal{C}$ is given by
 \begin{equation*}d_H(\mathcal{C})=\left\{\begin{array}{ll}
1 & \text{ if }  c=0;\\
\ell+2 & \text{ if } \ell p^{s-1}+1 \leq c \leq  (\ell +1)p^{s-1} \text{ with } 0 \leq \ell \leq p-2;\\
(i+1)p^k & \text{ if } p^s-p^{s-k}+(i-1)p^{s-k-1}+1\leq c \leq p^s-p^{s-k}+ip^{s-k-1} \\ & \text{ with } 1\leq i \leq p-1 \text{ and } 1 \leq k \leq s-1.
\end{array}\right. \end{equation*}
\end{thm}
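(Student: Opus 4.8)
The plan is to reduce the computation of $d_H(\mathcal{C})$ to the computation of the Hamming distance of the $u^2$-torsion code $\text{Tor}_{u^2}(\mathcal{C})$, which is a constacyclic code over the field $\mathbb{F}_{p^m}$, and then to invoke Theorem \ref{dthm}. First I would note that, since $(x^n-\alpha_0)^{p^s}=x^{np^s}-\alpha_0^{p^s}=x^{np^s}-\alpha$, the ideal $\text{Tor}_{u^2}(\mathcal{C})=\langle (x^n-\alpha_0)^c\rangle$ sits inside $\mathbb{F}_{p^m}[x]/\langle x^{np^s}-\alpha\rangle$, and one checks directly that it is an $\alpha$-constacyclic code of length $np^s$ over $\mathbb{F}_{p^m}$: if $u^2a_2(x)\in\mathcal{C}$ then $x\cdot u^2a_2(x)$, reduced modulo $x^{np^s}-\lambda$, equals $u^2\bigl(x\,a_2(x)\bmod(x^{np^s}-\alpha)\bigr)$, because $\lambda u^2=(\alpha+\gamma u^2)u^2=\alpha u^2$. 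Since $x^n-\alpha_0$ is irreducible over $\mathbb{F}_{p^m}$, Theorem \ref{dthm} applies with $\eta=\alpha$, $\eta_0=\alpha_0$ and $\upsilon=c$, and shows that $d_H(\text{Tor}_{u^2}(\mathcal{C}))$ equals the right-hand side of the asserted formula (the case $\upsilon=p^s$ does not occur here as $c<p^s$, and $c=0$ gives $\text{Tor}_{u^2}(\mathcal{C})=\langle 1\rangle$ of Hamming distance $1$, matching the formula).

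Next I would establish $d_H(\mathcal{C})=d_H(\text{Tor}_{u^2}(\mathcal{C}))$. For the easy inequality, choose $v_2(x)\in\text{Tor}_{u^2}(\mathcal{C})$ of minimal Hamming weight; then $u^2v_2(x)\in\mathcal{C}$ by the definition of $\text{Tor}_{u^2}$, with $w_H(u^2v_2(x))=w_H(v_2(x))=d_H(\text{Tor}_{u^2}(\mathcal{C}))$, so $d_H(\mathcal{C})\le d_H(\text{Tor}_{u^2}(\mathcal{C}))$. For the reverse inequality, take an arbitrary nonzero codeword $v(x)=v_0(x)+uv_1(x)+u^2v_2(x)$ with $v_0(x),v_1(x),v_2(x)\in\mathbb{F}_{p^m}[x]/\langle (x^n-\alpha_0)^{p^s}\rangle$ and split into three cases. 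If $v_0(x)\ne0$, then $v_0(x)\in\text{Res}_u(\mathcal{C})$ and $w_H(v(x))\ge w_H(v_0(x))$, since each position where $v_0(x)$ is nonzero stays nonzero in $v(x)$. If $v_0(x)=0$ but $v_1(x)\ne0$, then $v_1(x)\in\text{Tor}_u(\mathcal{C})$ and $w_H(v(x))\ge w_H(v_1(x))$, since in each such position the coefficient of $v(x)$ equals $u$ times a unit. If $v_0(x)=v_1(x)=0$, then $v(x)=u^2v_2(x)$ with $v_2(x)\in\text{Tor}_{u^2}(\mathcal{C})$. Using the chain $\text{Res}_u(\mathcal{C})\subseteq\text{Tor}_u(\mathcal{C})\subseteq\text{Tor}_{u^2}(\mathcal{C})$, which is immediate on multiplying a codeword by $u$, in all three cases $w_H(v(x))\ge d_H(\text{Tor}_{u^2}(\mathcal{C}))$, whence $d_H(\mathcal{C})\ge d_H(\text{Tor}_{u^2}(\mathcal{C}))$.

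Combining the two inequalities with the first paragraph yields the formula. I do not expect a genuine obstacle: the real combinatorial content, namely the piecewise expression for the Hamming distance of $\langle (x^n-\alpha_0)^c\rangle$ over $\mathbb{F}_{p^m}$, is already encapsulated in Theorem \ref{dthm}. The only points needing care are the verification that $\text{Tor}_{u^2}(\mathcal{C})$ is genuinely $\alpha$-constacyclic over $\mathbb{F}_{p^m}$, so that Theorem \ref{dthm} is applicable, and the elementary bookkeeping of the weight inequalities $w_H(v(x))\ge w_H(v_i(x))$; both are short.
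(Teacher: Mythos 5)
Your proposal is correct and follows essentially the same route as the paper: both reduce the problem to showing $d_H(\mathcal{C})=d_H(\text{Tor}_{u^2}(\mathcal{C}))$ and then invoke Theorem \ref{dthm} for the torsion code $\langle (x^n-\alpha_0)^c\rangle$ over $\mathbb{F}_{p^m}$. The only cosmetic difference is that you argue component-wise via the chain $\text{Res}_u(\mathcal{C})\subseteq\text{Tor}_u(\mathcal{C})\subseteq\text{Tor}_{u^2}(\mathcal{C})$ uniformly for all codewords, whereas the paper runs the equivalent inequality $w_H(Q(x))\ge w_H(uQ(x))$ separately through the three Types of ideals; both are sound.
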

\vspace{-1mm}\begin{proof}  To prove the result,  we assert that \begin{equation}\label{as} d_H(\mathcal{C})=d_H(\text{Tor}_{u^2}(\mathcal{C})).\end{equation} 
 
 To prove this assertion, we note that  $ \langle u^2 (x^{n}-\alpha_0)^{c} \rangle \subseteq \mathcal{C},$ which implies that \begin{equation}\label{ob1} d_H(\langle u^2 (x^{n}-\alpha_0)^{c}\rangle) \geq d_H(\mathcal{C}).\end{equation}\vspace{-2mm}  Next we observe  that  
\vspace{-1mm} \begin{equation}\label{Q} w_H(Q(x)) \geq w_H(uQ(x))  \text{ for each }Q(x) \in \mathcal{R}_{\alpha+\gamma u^2}.\end{equation}
  When $\mathcal{C}$ is of Type I, we have $\mathcal{C}=\langle u^2 (x^n-\alpha_0)^{c}\rangle.$ Here we have  $d_H(\mathcal{C})=d_H(\langle u^2 (x^{n}-\alpha_0)^{c} \rangle).$

When $\mathcal{C}$ is of Type II, we have $\mathcal{C}=\langle u (x^{n}-\alpha_0)^{b}+u^2(x^{n}-\alpha_0)^t G(x),u^2 (x^{n}-\alpha_0)^{c} \rangle,$ where $c \leq b <  p^s,$ $c+b-p^s \leq t < c $ if $G(x) \neq 0$  and  $G(x)$ is either 0 or  a unit in $\mathbb{F}_{p^m}[x]/\langle f_j(x)^{p^s}\rangle.$ Here for each codeword $Q(x) \in \mathcal{C} \setminus \langle u^2 (x^{n}-\alpha_0)^{c} \rangle,$  we see, by \eqref{Q}, that $w_H(Q(x)) \geq w_H(uQ(x)) \geq  d_H(\langle u^2 (x^{n}-\alpha_0)^{c}\rangle ).$ From this, we obtain $d_H(\mathcal{C}) \geq d_H(\langle u^2 (x^{n}-\alpha_0)^{c} \rangle).$ 

When $\mathcal{C}$ is of  Type III, we have $\mathcal{C}=\langle (x^{n}-\alpha_0)^a+u (x^{n}-\alpha_0)^{t_1} D_1(x)+u^2 (x^{n}-\alpha_0)^{t_2} D_2(x),u (x^{n}-\alpha_0)^{b}+u^2(x^{n}-\alpha_0)^{\theta}V(x),u^2 (x^{n}-\alpha_0)^{c}\rangle,$ where  $c \leq b \leq a <p^s,$ $a+b-p^s \leq t_1 < b $ if $D_1(x) \neq 0,$ $0 \leq t_2 < c$ if $D_2(x) \neq 0,$ $b+c-p^s \leq \theta< c$ if $V(x) \neq 0$ and $D_1(x),D_2(x),V(x)$ are either 0 or a units  in $\mathbb{F}_{p^m}[x]/\langle f_j(x)^{p^s}\rangle.$  Here for each codeword $Q(x) \in \mathcal{C} \setminus \langle u^2 (x^{n}-\alpha_0)^{c} \rangle$ and $Q(x) \in \langle u \rangle,$ by \eqref{Q}, we see that $w_H(Q(x)) \geq w_H(uQ(x)) \geq  d_H(\langle u^2 (x^{n}-\alpha_0)^{c}\rangle ).$ Further, for a codeword $Q(x) \in \mathcal{C} \setminus \langle u \rangle,$ by \eqref{Q} again,   we note that $w_H(Q(x)) \geq w_H(u^2 Q(x)) \geq  d_H(\langle u^2 (x^{n}-\alpha_0)^{c}\rangle ).$ This implies that $d_H(\mathcal{C}) \geq d_H(\langle u^2 (x^{n}-\alpha_0)^{c} \rangle).$ 

From this and by \eqref{ob1}, we get $d_H(\mathcal{C}) = d_H(\langle u^2 (x^{n}-\alpha_0)^{c}\rangle).$ Further, we observe that $\label{ob}d_H(\langle u^2 (x^n-\alpha_0)^{c}\rangle)=d_H(\text{Tor}_{u^2}(\mathcal{C})),$ from which the assertion \eqref{as} follows.
Now by applying Theorem \ref{dthm}, we get the desired result.\end{proof}

 In the following theorem, we determine RT distances of all non-trivial $(\alpha+\gamma u^2)$-constacyclic codes of length $np^s$ over $\mathcal{R}.$ 
\vspace{-2mm}\begin{thm}   \label{RTD2} Let  $\mathcal{C}$ be a non-trivial $(\alpha+\gamma u^2)$-constacyclic code of length $np^s$ over $\mathcal{R}$ with $\text{Tor}_{u^2}(\mathcal{C})=\langle (x^n-\alpha_0)^c \rangle$ for some integer $c$ satisfying $0 \leq c < p^s$ (as determined  in Theorems \ref{t2} and \ref{t3}). Then  the RT distance $d_{RT}(\mathcal{C})$ of the code $\mathcal{C}$ is given by \begin{equation*}d_{RT}(\mathcal{C})=nc+1.\end{equation*}
\end{thm}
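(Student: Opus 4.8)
The plan is to compute $d_{RT}(\mathcal{C})$ by squeezing it between the value $nc+1$ from above and from below, using the defining identity $d_{RT}(\mathcal{C})=1+\min\{\deg Q(x): Q(x)\in\mathcal{C}\setminus\{0\}\}$ (since $w_{RT}(c(x))=1+\deg c(x)$ for nonzero $c(x)$), together with the torsion description $\text{Tor}_{u^2}(\mathcal{C})=\langle (x^n-\alpha_0)^c\rangle$ provided by Theorems \ref{t2} and \ref{t3}.

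For the upper bound I would note that $(x^n-\alpha_0)^c\in\text{Tor}_{u^2}(\mathcal{C})$ means precisely $u^2(x^n-\alpha_0)^c\in\mathcal{C}$. Since $0\leq c<p^s$, this element is nonzero in $\mathcal{R}[x]/\langle x^{np^s}-\alpha-\gamma u^2\rangle$ and its canonical representative (of degree less than $np^s$) has degree exactly $nc$; hence $w_{RT}(u^2(x^n-\alpha_0)^c)=nc+1$, and so $d_{RT}(\mathcal{C})\leq nc+1$.

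For the lower bound I would take an arbitrary nonzero $Q(x)\in\mathcal{C}$ and write it uniquely as $Q(x)=q_0(x)+uq_1(x)+u^2q_2(x)$ with $q_0(x),q_1(x),q_2(x)\in\mathcal{P}_{np^s}(\mathbb{F}_{p^m})$, so that $\deg Q(x)=\max\{\deg q_0(x),\deg q_1(x),\deg q_2(x)\}$. Letting $\ell\in\{0,1,2\}$ be the least index with $q_\ell(x)\neq 0$, the relations $u^3=0$ and $\deg q_i(x)<np^s$ yield, with no reduction modulo $x^{np^s}-\alpha-\gamma u^2$ taking place, that $u^{2-\ell}Q(x)=u^2q_\ell(x)\in\mathcal{C}$, whence $q_\ell(x)\in\text{Tor}_{u^2}(\mathcal{C})=\langle(x^n-\alpha_0)^c\rangle$. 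Now $\mathbb{F}_{p^m}[x]/\langle(x^n-\alpha_0)^{p^s}\rangle$ is a chain ring with maximal ideal $\langle x^n-\alpha_0\rangle$ (Proposition \ref{pr1}), and a short division-algorithm argument shows that every nonzero element of $\langle(x^n-\alpha_0)^c\rangle$, represented by the polynomial of degree less than $np^s$, has degree at least $nc$: such an element equals $(x^n-\alpha_0)^c\,t(x)$ with $\deg t(x)<n(p^s-c)$, so its degree is $nc+\deg t(x)\geq nc$. Therefore $\deg Q(x)\geq\deg q_\ell(x)\geq nc$, i.e.\ $w_{RT}(Q(x))\geq nc+1$, giving $d_{RT}(\mathcal{C})\geq nc+1$. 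Combining the two inequalities yields $d_{RT}(\mathcal{C})=nc+1$.

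The computations involved are routine: the no-reduction claims follow from the degree bounds and $u^3=0$, and the degree estimate inside $\langle(x^n-\alpha_0)^c\rangle$ is elementary. The only point needing mild care is that the argument must cover all three types (I, II, III) of non-trivial codes $\mathcal{C}$ uniformly; the device of multiplying $Q(x)$ by $u^{2-\ell}$ to land in $\text{Tor}_{u^2}(\mathcal{C})$ accomplishes exactly this without any case analysis, so I do not anticipate a genuine obstacle.
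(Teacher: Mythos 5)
Your proposal is correct and takes essentially the same route as the paper: the upper bound comes from the explicit codeword $u^2(x^n-\alpha_0)^c$, and the lower bound comes from pushing an arbitrary nonzero codeword into the $u^2$-torsion and using that every nonzero element of $\langle u^2(x^n-\alpha_0)^c\rangle$ has degree at least $nc$. The paper accomplishes the latter by applying the inequality $w_{RT}(Q(x))\geq w_{RT}(uQ(x))$ once or twice in a separate case analysis over Types I--III; your multiplication by $u^{2-\ell}$ is exactly this step packaged uniformly, so no substantive difference.
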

\vspace{-2mm}\begin{proof}   To prove the result, we first observe that\vspace{-1mm}\begin{equation}\label{eq1} w_{RT}(Q(x)) \geq w_{RT}(uQ(x)) \text{ for each }Q(x) \in \mathcal{R}_{\alpha + \gamma u^2}.\end{equation}\vspace{-3mm}
\begin{description}\vspace{-5mm}\item[(i)] When $\mathcal{C}$ is of Type I, we have $\mathcal{C}=\langle u^2 (x^{n}-\alpha_0)^{c}\rangle.$  Here we note that $\mathcal{C}=\langle u^2 (x^{n}-\alpha_0)^{c}\rangle =\{ u^2 (x^{n}-\alpha_0)^{c}f(x) \  | \  f(x) \in \mathbb{F}_{p^m}[x]\}.$ Now for each non-zero $Q(x) \in \mathcal{C},$ by \eqref{eq1}, we see that $w_{RT}(Q(x)) \geq w_{RT}(u^2 (x^{n}-\alpha_0)^{c})=nc+1,$ which implies that $d_{RT}(\mathcal{C}) \geq nc+1.$ Since $u^2 (x^{n}-\alpha_0)^{c} \in \mathcal{C},$ we obtain $d_{RT}(\mathcal{C}) = nc+1.$ 
\vspace{-1mm}\item[(ii)] When $\mathcal{C}$ is of Type II, we have $\mathcal{C}=\langle u (x^{n}-\alpha_0)^{b}+u^2(x^{n}-\alpha_0)^t G(x),u^2 (x^{n}-\alpha_0)^{c} \rangle ,$ where $c \leq b <p^s,$ $c+b-p^s \leq t < c $ if $G(x) \neq 0$  and  $G(x)$ is either 0 or  a unit in $\mathbb{F}_{p^m}[x]/\langle f_j(x)^{p^s}\rangle.$  Here by \eqref{eq1},  we note that  $w_{RT}(Q(x)) \geq w_{RT}(u Q(x))$ for each $Q(x) \in \mathcal{C} \setminus \langle u^2 \rangle,$ which implies that $w_{RT}(Q(x)) \geq d_{RT}(\langle u^2 (x^{n}-\alpha_0)^{c}\rangle) $ for each $Q(x) \in \mathcal{C} \setminus \langle u^2 \rangle.$ From this, we get $d_{RT}( \mathcal{C}) \geq d_{RT}(\langle u^2 (x^{n}-\alpha_0)^{c} \rangle).$ Since $\langle u^2 (x^{n}-\alpha_0)^{c}  \rangle \subseteq \mathcal{C},$ we have $d_{RT}(\langle u^2 (x^{n}-\alpha_0)^{c} \rangle) \geq d_{RT}( \mathcal{C}).$ This implies that $d_{RT}( \mathcal{C}) = d_{RT}(\langle u^2 (x^{n}-\alpha_0)^{c} \rangle).$ From this and by  case (i),  we get  $d_{RT}(\mathcal{C})=nc+1.$
\vspace{-1mm}\item[(iii)] When $\mathcal{C}$ is of Type III, we have $\mathcal{C}=\langle (x^{n}-\alpha_0)^a+u (x^{n}-\alpha_0)^{t_1} D_1(x)+u^2 (x^{n}-\alpha_0)^{t_2} D_2(x),u (x^{n}-\alpha_0)^{b}+u^2(x^{n}-\alpha_0)^{\theta}V(x), u^2 (x^{n}-\alpha_0)^{c}\rangle,$  where   $c \leq b \leq a <p^s,$ $a+b-p^s \leq t_1 < b $ if $D_1(x) \neq 0,$ $0 \leq t_2 < c$ if $D_2(x) \neq 0,$ $b+c-p^s \leq \theta< c$ if $V(x) \neq 0$ and $D_1(x),D_2(x),V(x)$ are either 0 or a units  in $\mathbb{F}_{p^m}[x]/\langle f_j(x)^{p^s}\rangle.$ For each $Q(x) \in \mathcal{C} \setminus \langle u \rangle,$ by \eqref{eq1}, we see that $w_{RT}(Q(x)) \geq w_{RT}(u^2 Q(x)).$ From this, we get $w_{RT}(Q(x)) \geq d_{RT}(\langle u^2 (x^{n}-\alpha_0)^{c}\rangle) $ for each $Q(x) \in \mathcal{C} \setminus \langle u \rangle.$ Further, for a codeword $Q(x) \in \mathcal{C} \setminus \langle u^2 (x^{n}-\alpha_0)^{c} \rangle$ with $Q(x) \in \langle u \rangle,$  by \eqref{eq1} again, we see that $w_{RT}(Q(x)) \geq w_{RT}(uQ(x)) \geq  d_{RT}(\langle u^2 (x^{n}-\alpha_0)^{c}\rangle ).$ This implies that $d_{RT}( \mathcal{C}) \geq d_{RT}(\langle u^2 (x^{n}-\alpha_0)^{c}\rangle).$ On the other hand, as $\langle u^2 (x^{n}-\alpha_0)^{c} \rangle \subseteq \mathcal{C},$ we have $d_{RT}(u^2 (x^{n}-\alpha_0)^{c}\rangle) \geq d_{RT}(\mathcal{C}),$ which implies that $d_{RT}(\mathcal{C})=d_{RT}(\langle u^2(x^{n}-\alpha_0)^{c}\rangle).$ From this and by case (i), we get  $d_{RT}(\mathcal{C})=nc+1.$ \end{description}

This completes the proof of the theorem.\vspace{-3mm}\end{proof}

In the following theorem, we determine RT weight distributions of all  $(\alpha+\gamma u^2)$-constacyclic codes of length $np^s$ over $\mathcal{R}.$
\vspace{-2mm}\begin{thm} \label{RTW2} Let  $\mathcal{C}$ be an $(\alpha+\gamma u^2)$-constacyclic code of length $np^s$ over $\mathcal{R}$ with  $\text{Res}_{u}(\mathcal{C})=\langle (x^n-\alpha_0)^a \rangle,$ $\text{Tor}_{u}(\mathcal{C})=\langle (x^n-\alpha_0)^b\rangle $ and $\text{Tor}_{u^2}(\mathcal{C})=\langle (x^n-\alpha_0)^c\rangle$ for some integers $a,b,c$ satisfying $0 \leq c \leq b \leq a \leq p^s$   (as determined  in Theorems \ref{t2} and \ref{t3}). For $0 \leq \rho \leq np^s,$ let $\mathcal{A}_{\rho}$ denote the number of codewords in $\mathcal{C}$ having the RT weight as $\rho.$
\begin{enumerate}
\vspace{-2mm}\item[(a)] If $\mathcal{C}=\{0\},$ then we have $\mathcal{A}_0=1$ and $\mathcal{A}_\rho=0$ for $1 \leq \rho \leq np^s.$ \vspace{-2mm}\item[(b)] If $\mathcal{C}=\langle 1 \rangle,$ then we have $\mathcal{A}_0=1$ and $\mathcal{A}_\rho=(p^{3m}-1)p^{3m(\rho-1)}$ for $1 \leq \rho \leq np^s.$
\vspace{-2mm}\item[(c)] If $\mathcal{C}=\langle u^2(x^{n}-\alpha_0) ^{c}\rangle $ is of Type I, then we have\vspace{-1mm} \begin{equation*}\mathcal{A}_\rho=\left\{\begin{array}{ll}
1 & \text{ if } \rho=0;\\
0 & \text{ if } 1\leq \rho \leq nc;\\
(p^{m}-1)p^{m(\rho-nc-1)}  & \text{ if } nc+1 \leq \rho \leq np^s.
\end{array}\right. \vspace{-1mm}\end{equation*}

\vspace{-2mm}\item[(d)] If $\mathcal{C}=\langle u (x^{n}-\alpha_0)^{b}+u^2(x^{n}-\alpha_0)^t G(x),u^2 (x^{n}-\alpha_0)^{c} \rangle $ is of Type II, then we have \vspace{-1mm}\begin{equation*}\mathcal{A}_\rho=\left\{\begin{array}{ll}
1 & \text{ if } \rho=0;\\
0 & \text{ if } 1\leq \rho \leq nc;\\
(p^{m}-1)p^{m(\rho-nc-1)}  & \text{ if } nc+1 \leq \rho \leq nb;\\
(p^{2m}-1)p^{m(2\rho-nb-nc-2)} & \text{ if } nb+1 \leq \rho \leq np^s. \end{array}\right. \vspace{-1mm}\end{equation*}

\vspace{-2mm}\item[(e)] If $\mathcal{C}=\langle (x^{n}-\alpha_0)^a+u (x^{n}-\alpha_0)^{t_1} D_1(x)+u^2 (x^{n}-\alpha_0)^{t_2} D_2(x),u (x^{n}-\alpha_0)^{b}+u^2(x^{n}-\alpha_0)^{\theta}V(x),u^2 (x^{n}-\alpha_0)^{c}\rangle $ is of Type III, then we have \vspace{-1mm}\begin{equation*}\mathcal{A}_\rho=\left\{\begin{array}{ll}
1 & \text{ if } \rho=0;\\
0 & \text{ if } 1\leq \rho \leq nc;\\
(p^{m}-1)p^{m(\rho-nc-1)}  & \text{ if } nc+1 \leq \rho \leq nb;\\
(p^{2m}-1)p^{m(2\rho-nb-nc-2)} & \text{ if } nb+1 \leq \rho \leq na;\\
(p^{3m}-1)p^{m(3\rho-na-nb-nc-3)} & \text{ if } na+1 \leq \rho \leq np^s. \end{array}\right. \vspace{-1mm}\end{equation*}
\end{enumerate}
\end{thm}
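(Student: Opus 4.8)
The plan is to compute, for each integer $\rho$ with $0\le\rho\le np^s$, the quantity $S_\rho:=|\{c(x)\in\mathcal{C}: w_{RT}(c(x))\le\rho\}|$, and then to recover $\mathcal{A}_\rho=S_\rho-S_{\rho-1}$ (with $S_{-1}:=0$). The key remark is that $w_{RT}(c(x))\le\rho$ holds precisely when $c(x)$ is represented by a polynomial of degree less than $\rho$; hence $B_\rho:=\{c(x)\in\mathcal{R}_{\alpha+\gamma u^2}: w_{RT}(c(x))\le\rho\}$ is the free $\mathcal{R}$-submodule of $\mathcal{R}_{\alpha+\gamma u^2}$ with basis $1,x,\dots,x^{\rho-1}$, so that $\mathcal{C}\cap B_\rho$ is an $\mathcal{R}$-submodule of $\mathcal{C}$ and $S_\rho=|\mathcal{C}\cap B_\rho|$. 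Parts (a) and (b) are then immediate, since for $\mathcal{C}=\langle1\rangle$ one has $S_\rho=|\mathcal{R}|^{\rho}=p^{3m\rho}$.

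For parts (c)--(e) I would intersect the chain of $\mathcal{R}$-submodules $\{0\}\subseteq\langle u^2(x^n-\alpha_0)^c\rangle\subseteq K_{\mathcal{C}}\subseteq\mathcal{C}$ with $B_\rho$, where $K_{\mathcal{C}}=\{ua_1(x)+u^2a_2(x)\in\mathcal{C}\}$ is the kernel of the map $\mathcal{C}\to\text{Res}_u(\mathcal{C})$ from the proof of Lemma \ref{card}. By multiplicativity of cardinalities along this chain, $S_\rho$ is the product of $|\langle u^2(x^n-\alpha_0)^c\rangle\cap B_\rho|$, $|(K_{\mathcal{C}}\cap B_\rho)/(\langle u^2(x^n-\alpha_0)^c\rangle\cap B_\rho)|$ and $|(\mathcal{C}\cap B_\rho)/(K_{\mathcal{C}}\cap B_\rho)|$, and it suffices to identify these three factors with $N_c(\rho)$, $N_b(\rho)$, $N_a(\rho)$ respectively, where $N_i(\rho)$ denotes the number of elements of RT weight at most $\rho$ in the ideal $\langle(x^n-\alpha_0)^i\rangle$ of $\mathbb{F}_{p^m}[x]/\langle(x^n-\alpha_0)^{p^s}\rangle$. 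Three points enter here. First, multiplication by $u^2$ is a degree-preserving bijection of $\mathbb{F}_{p^m}[x]/\langle(x^n-\alpha_0)^{p^s}\rangle$ onto $\langle u^2\rangle$ (recall $u^2(x^n-\alpha_0)^{p^s}=0$ in $\mathcal{R}_{\alpha+\gamma u^2}$), whence $|\langle u^2(x^n-\alpha_0)^c\rangle\cap B_\rho|=N_c(\rho)$. Second, the map $ua_1(x)+u^2a_2(x)\mapsto a_1(x)$ carries $K_{\mathcal{C}}\cap B_\rho$ onto $\{a_1(x)\in\text{Tor}_u(\mathcal{C}):\deg a_1(x)<\rho\}$ with kernel $\langle u^2(x^n-\alpha_0)^c\rangle\cap B_\rho$; the inclusion ``$\subseteq$'' of the image is clear, while ``$\supseteq$'' has content only when $\rho\ge nb+1$ (otherwise $\text{Tor}_u(\mathcal{C})$ has no nonzero element of degree $<\rho$), and there one lifts $a_1(x)$ to some $ua_1(x)+u^2a_2^{0}(x)\in K_{\mathcal{C}}$ and subtracts a suitable multiple of $u^2(x^n-\alpha_0)^c\in\mathcal{C}$ to replace $a_2^{0}(x)$ by its remainder modulo $(x^n-\alpha_0)^c$, of degree $<nc\le nb\le\rho-1$. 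Third, and entirely analogously, $c_0(x)+ua_1(x)+u^2a_2(x)\mapsto c_0(x)$ carries $\mathcal{C}\cap B_\rho$ onto $\{c_0(x)\in\text{Res}_u(\mathcal{C}):\deg c_0(x)<\rho\}$ with kernel $K_{\mathcal{C}}\cap B_\rho$; surjectivity has content only for $\rho\ge na+1$, where one lifts $c_0(x)$ to a codeword, reduces its $u$-part modulo $(x^n-\alpha_0)^b$ by means of the generator $u(x^n-\alpha_0)^b+u^2(x^n-\alpha_0)^{\theta}V(x)$ (or $u(x^n-\alpha_0)^b$ in Type II; nothing is needed in Type I), and then reduces the leftover $u^2$-part modulo $(x^n-\alpha_0)^c$, the chain $c\le b\le a$ ensuring $nc\le nb\le na\le\rho-1$ at each step.

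Granting the layerwise identity $S_\rho=N_a(\rho)N_b(\rho)N_c(\rho)$, the rest is bookkeeping. By Proposition \ref{pr1} (or directly by the division algorithm), for $0\le i\le p^s$ one has $N_i(\rho)=1$ when $0\le\rho\le ni$ and $N_i(\rho)=p^{m(\rho-ni)}$ when $ni+1\le\rho\le np^s$, since the relevant elements are exactly $(x^n-\alpha_0)^i g(x)$ with $g(x)=0$ or $\deg g(x)\le\rho-1-ni$. Substituting $i=a,b,c$ with $\text{Res}_u(\mathcal{C})=\langle(x^n-\alpha_0)^a\rangle$, $\text{Tor}_u(\mathcal{C})=\langle(x^n-\alpha_0)^b\rangle$, $\text{Tor}_{u^2}(\mathcal{C})=\langle(x^n-\alpha_0)^c\rangle$ (so $a=b=p^s$ in Type I, $a=p^s>b$ in Type II, $a<p^s$ in Type III), multiplying out $S_\rho=N_a(\rho)N_b(\rho)N_c(\rho)$ and forming $\mathcal{A}_\rho=S_\rho-S_{\rho-1}$ produces exactly the stated formulas; any of the intervals $(nc,nb]$, $(nb,na]$, $(na,np^s]$ that happens to be empty simply drops out, which is why Types I and II appear as degenerations of the Type III formula. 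The main obstacle is the surjectivity claims in the second and third points above --- namely, that an element of small RT weight in $\text{Tor}_u(\mathcal{C})$ or in $\text{Res}_u(\mathcal{C})$ always lifts to a codeword of no larger RT weight; the inequalities $c\le b\le a$ together with the explicit shapes of the generators recorded in Theorems \ref{t2}--\ref{t3} are precisely what make these lifts available.
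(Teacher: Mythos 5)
Your proposal is correct, and it is organized differently from the paper's argument, so a comparison is worthwhile. The paper proves (d) and (e) by first asserting a unique representation of every codeword as a combination of the two (resp.\ three) generators with coefficient polynomials of bounded degree, and then counting the codewords of RT weight \emph{exactly} $\rho$ by a disjoint case analysis on which coefficient attains the critical degree; the resulting sums such as $(p^m-1)p^{m(\rho-nb-1)}p^{m(\rho-nc)}+p^{m(\rho-nb-1)}(p^m-1)p^{m(\rho-nc-1)}$ are then simplified to the stated closed forms. You instead compute the cumulative count $S_\rho=|\mathcal{C}\cap B_\rho|$ by intersecting the filtration $\langle u^2(x^n-\alpha_0)^c\rangle\subseteq K_{\mathcal{C}}\subseteq\mathcal{C}$ with $B_\rho$ and difference at the end. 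The two routes rest on the same structural facts (the triangular shape of the generators and $c\le b\le a$), but yours buys a cleaner product formula $S_\rho=N_a(\rho)N_b(\rho)N_c(\rho)$ that handles Types I--III and both rings uniformly, dispenses with the unique-representation claim (which the paper leaves unproved), and replaces the leading-degree case analysis by a single difference; the price is that you must prove the two surjectivity statements, i.e.\ that elements of $\mathrm{Tor}_u(\mathcal{C})$ and $\mathrm{Res}_u(\mathcal{C})$ of degree less than $\rho$ lift to codewords in $B_\rho$. You have correctly isolated these as the crux and your lifting arguments are sound: for the $\mathrm{Tor}_u$ layer the reduction of the $u^2$-part modulo $(x^n-\alpha_0)^c$ lands in degree $<nc\le nb\le\rho-1$, and for the $\mathrm{Res}_u$ layer one can even take the direct lift $q(x)\bigl((x^n-\alpha_0)^a+u(x^n-\alpha_0)^{t_1}D_1(x)+u^2(x^n-\alpha_0)^{t_2}D_2(x)\bigr)$ of $c_0=(x^n-\alpha_0)^aq$, whose $u$- and $u^2$-parts already have degree $<\rho$ because $\deg D_1<n(b-t_1)$, $\deg D_2<n(c-t_2)$ and $c\le b\le a$, so no subsequent reduction is actually needed. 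The final bookkeeping $\mathcal{A}_\rho=S_\rho-S_{\rho-1}$ reproduces the stated formulas, including at the boundary values $\rho=nb+1$ and $\rho=na+1$.
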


\begin{proof} Proofs of parts (a) and (b) are trivial. To prove parts (c)-(e), by Theorem \ref{RTD2}(c), we see that $d_{RT}(\mathcal{C})=nc+1,$ which implies that $\mathcal{A}_\rho=0$ for $1 \leq \rho \leq nc.$ So from now on, we assume that $nc+1 \leq \rho \leq np^s.$

To prove (c), let $\mathcal{C}= \langle u^2(x^{n}-\alpha_0) ^{c}\rangle .$ Here we see that $\mathcal{C}= \langle u^2(x^{n}-\alpha_0) ^{c}\rangle =\{u^2 (x^{n}-\alpha_0)^{c} F(x)\ |\ F(x) \in \mathbb{F}_{p^m}[x]\}.$  This implies that the codeword $u^2(x^{n}-\alpha_0)^{c} F(x) \in \mathcal{C}$ has RT weight $\rho$ if and only if   $\text{ deg } F(x)=\rho-nc-1.$ From this, we obtain $\mathcal{A}_\rho=(p^{m}-1)p^{m(\rho-nc-1)} .$

To prove (d), let $\mathcal{C}=\langle u (x^{n}-\alpha_0)^{b}+u^2(x^{n}-\alpha_0)^t G(x),u^2 (x^{n}-\alpha_0)^{c} \rangle .$  Here we observe that each codeword $Q(x)\in \mathcal{C}$ can be uniquely expressed as $Q(x)= (u (x^{n}-\alpha_0)^{b}+u^2(x^{n}-\alpha_0)^t G(x))A_Q(x)+u^2 (x^{n}-\alpha_0)^{c} B_Q(x),$ where $A_Q(x),B_Q(x) \in \mathbb{F}_{p^m}[x]$ satisfy  $ \text{ deg }A_Q(x) \leq n(p^s-b)-1$ if $A_Q(x) \neq 0$ and $ \text{ deg }B_Q(x) \leq n(p^s-c)-1$ if $B_Q(x) \neq 0.$   From this, we see that if $ nc+1 \leq \rho \leq nb,$ then the RT weight of the codeword $Q(x) \in \mathcal{C}$ is $\rho$ if and only if  $A_Q(x)=0$ and $\text{ deg }B_Q(x)=\rho-nc-1.$ This implies that $\mathcal{A}_\rho=(p^{m}-1)p^{m(\rho-nc-1)}$ for $ nc+1 \leq \rho \leq nb.$
Further, if $nb+1 \leq \rho \leq np^s,$ then the RT weight of the codeword $Q(x) \in \mathcal{C}$ is $\rho$ if and only if one of the following two conditions are satisfied: (i)  $\text{ deg }A_Q(x)=\rho-nb-1$ and $B_Q(x)$ is either 0 or $\text{deg }B_Q(x) \leq \rho-nc-1$ and  (ii) $A_Q(x)$ is either 0 or $\text{deg }A_Q(x) \leq \rho-n b-2$ and $\text{deg }B_Q(x) =\rho-nc-1.$ From this, we get $\mathcal{A}_\rho=(p^{2m}-1)p^{m(2\rho-nb-nc-2)}$ for $nb+1 \leq \rho \leq np^s.$

To prove (e),  let $\mathcal{C}=\langle(x^{n}-\alpha_0)^a+u (x^{n}-\alpha_0)^{t_1} D_1(x)+u^2 (x^{n}-\alpha_0)^{t_2} D_2(x),u (x^{n}-\alpha_0)^{b}+u^2(x^{n}-\alpha_0)^{\theta}V(x),\\ u^2 (x^{n}-\alpha_0)^{c}\rangle.$  Here we see that each codeword $Q(x)\in \mathcal{C}$ can be uniquely expressed as $Q(x)= ((x^{n}-\alpha_0)^a+u (x^{n}-\alpha_0)^{t_1} D_1(x)+u^2 (x^{n}-\alpha_0)^{t_2} D_2(x))M_Q(x)+(u(x^{n}-\alpha_0)^{b}+u^2(x^{n}-\alpha_0)^{\theta}V(x))N_Q(x)+u^2 (x^{n}-\alpha_0)^{c}W_Q(x),$ where $M_Q(x),N_Q(x),W_Q(x) \in \mathbb{F}_{p^m}[x]$ satisfy  $ \text{ deg }M_Q(x) \leq n(p^s-a)-1$ if $M_Q(x)\neq 0,$ $ \text{ deg }N_Q(x) \leq n(p^s-b)-1$ if $N_Q(x)\neq 0,$  and $ \text{ deg }W_Q(x) \leq n(p^s-c)-1$ if $W_Q(x)\neq 0.$   From this, we see that if $nc+1 \leq \rho \leq nb,$ then the codeword $Q(x) \in \mathcal{C}$ has  RT weight $\rho$ if and only if $M_Q(x)=N_Q(x)=0$ and $\text{ deg }W_Q(x)=\rho-nc-1.$ This implies that $\mathcal{A}_\rho=(p^{m}-1)p^{m(\rho-nc-1)}$ for $ nc+1 \leq \rho \leq nb.$
Further, if $nb+1 \leq \rho \leq na,$ then the RT weight of the codeword $Q(x) \in \mathcal{C}$ is $\rho$ if and only if $M_Q(x)=0$  and one of the following two conditions are satisfied: (i) $\text{ deg }N_Q(x)=\rho-nb-1$ and $W_Q(x)$ is either 0 or $\text{deg }W_Q(x) \leq \rho-1-nc;$   and (ii) $N_Q(x)$ is either 0 or $\text{deg }N_Q(x) \leq \rho-nb-2$ and $\text{ deg }W_Q(x) =\rho-nc-1.$ This implies that $\mathcal{A}_\rho=(p^{2m}-1)p^{m(2\rho-n\omega-n\mu-2)}$ for $ nb+1 \leq \rho \leq na.$ 
Next let  $na+1 \leq \rho \leq np^s.$ Here the RT weight of the codeword $Q(x) \in \mathcal{C}$ is $\rho$ if and only if one of the following three conditions are satisfied: (i) $\text{ deg }M_Q(x)=\rho-na-1,$ $N_Q(x)$ is either 0 or $\text{deg }N_Q(x) \leq \rho-nb-1$  and $W_Q(x)$ is either 0 or $\text{deg }W_Q(x) \leq \rho-nc-1;$ (ii) $M_Q(x)$ is either 0 or $\text{deg }M_Q(x) \leq \rho-na-2,$ $\text{ deg }N_Q(x) =\rho-nb-1$ and  $W_Q(x)$ is either 0 or $\text{deg }W_Q(x) \leq \rho-nc-1;$ and (iii) $M_Q(x)$ is either 0 or $\text{deg }M_Q(x) \leq \rho-na-2,$ $N_Q(x)$ is either 0 or $\text{deg }N_Q(x) \leq \rho-nb-2$ and $\text{ deg }W_Q(x) =\rho-nc-1.$  This implies that $\mathcal{A}_\rho=(p^{3m}-1)p^{m(3\rho-na-nb-nc-3)}$ for $ na+1 \leq \rho \leq np^s.$
 
This completes the proof of the theorem.\vspace{-2mm}\end{proof}
\vspace{-4mm}\section{Conclusion and Future work}\label{con}\vspace{-2mm}
Let $p$ be a prime,  $n,s,m$ be positive integers  with $\gcd(n,p)=1,$  $\mathbb{F}_{p^m}$ be the finite field of order $p^m,$  and let $\mathcal{R}=\mathbb{F}_{p^m}[u]/\langle u^3 \rangle $ be the finite commutative chain ring with unity.  Let  $\alpha,\beta, \gamma \in \mathbb{F}_{p^m}$ and $\alpha \neq 0.$ When $\alpha$ is an $n$th power of an element in $\mathbb{F}_{p^m}$ and $\beta \neq 0, $ one can determine all $(\alpha+\beta u +\gamma u^2)$-constacyclic codes of length $np^s$ over $\mathcal{R}$ by applying the results derived in Cao \cite{cao1} and by establishing a ring isomorphism from $\mathcal{R}[x]/\langle x^{np^s}-1-\alpha^{-1}\beta u -\alpha^{-1}\gamma u^2 \rangle $ onto $\mathcal{R}[x]/\langle x^{np^s}-\alpha-\beta u -\gamma u^2\rangle.$ However, when $\alpha$ is not an $n$th power of an element in $\mathbb{F}_{p^m}$ and $\beta \neq 0,$ algebraic structures of all  $(\alpha+\beta u +\gamma u^2)$-constacyclic codes of length $np^s$ over $\mathcal{R}$ and their dual codes were not established. In this paper, we determined all $(\alpha+\beta u +\gamma u^2)$-constacyclic codes of length $np^s$ over $\mathcal{R}$ and their dual codes when $\beta \neq 0.$ We also considered the case $\beta=0$ in this paper, and we determined  all $(\alpha+\gamma u^2)$-constacyclic codes of length $np^s$ over $\mathcal{R}$ and their dual codes. We also listed some isodual $(\alpha+\beta u+\gamma u^2)$-constacyclic codes of length $np^s$ over $\mathcal{R}$ when the binomial $x^n-\alpha_0$ is irreducible over $\mathbb{F}_{p^m}.$ 

In another work \cite{sharma3}, we obtained Hamming distances, RT distances and RT weight distributions of $(\alpha+\beta u+\gamma u^2)$-constacyclic codes of length $np^s$ over $\mathcal{R}$ when the binomial $x^n-\alpha_0$ is irreducible over $\mathbb{F}_{p^m}$ and $\beta$ is non-zero. In this paper, we considered the case $\beta=0$ and we explicitly determined these parameters for all $(\alpha+\gamma u^2)$-constacyclic codes of length $np^s$ over $\mathcal{R},$ provided the binomial $x^n-\alpha_0$ is irreducible over $\mathbb{F}_{p^m}.$

 This work completes the problem of determination of all repeated-root constacyclic codes of arbitrary lengths over $\mathcal{R}$ and their dual codes. It would be interesting  to determine their Hamming distances, RT distances and RT weight distributions in the case when the binomial $x^n-\alpha_0$ is reducible over $\mathbb{F}_{p^m}.$ Another interesting problem would be to study their duality properties and to determine their homogeneous distances.

\vspace{-3mm} 

\end{document}